\newtheorem{theorem}{Theorem}[section]
\newtheorem{assumption}{Assumption}[section]
\newtheorem{definition}{Definition}[section]
\newtheorem{proposition}{Proposition}[section]
\newtheorem{example}{Example}[section]
\newtheorem{remark}{Remark}[section]
\newtheorem{lemma}{Lemma}[section]
\newcommand{\op}{o_{\mathbb{P}}}
\newcommand{\Op}{O_{\mathbb{P}}}
\newcommand{\E}{\mathbb{E}}
\newcommand{\F}{\mathcal{F}} 
\newcommand{\FF}{\mathcal{F}}
\newcommand{\B}{\mathcal{B}}
\newcommand{\R}{\mathbb{R}}
\newcommand{\mf}{\mathbf}
\newcommand{\bs}{\boldsymbol}
\newcommand{\lf}{\lfloor}
\newcommand{\rf}{\rfloor}
\newcommand{\proj}{\mathcal{P}}
\newcommand{\I}{\mathcal{I}}
\newcommand{\T}{\top}
\newcommand{\lt}{\left}
\newcommand{\rt}{\right}
\newcommand{\pp}{\mathbb{P}}
\numberwithin{equation}{section}
\definecolor{darkgreen}{rgb}{0.0, 0.5, 0.0}
\definecolor{ashgrey}{rgb}{0.7, 0.75, 0.71}
\newcommand{\nb}{\lf nb_n \rf}
\title{\bf Detecting long-range dependence for time-varying  linear models}
\date{}
\author{ \small Lujia Bai \\
\small Center for Statistical Science, \\
	\small Department of Industrial Engineering, \\
	\small Tsinghua University\\
\and \small Weichi Wu\\
\small Center for Statistical Science, \\
	\small Department of Industrial Engineering, \\
	\small Tsinghua University\\}
\begin{document}
\maketitle
\begin{abstract}
We consider the problem of testing for long-range dependence in time-varying coefficient regression models, where the covariates and errors are locally stationary, allowing complex temporal dynamics and heteroscedasticity.  We develop KPSS, R/S, V/S, and K/S-type statistics based on the nonparametric residuals. Under the null hypothesis, the local alternatives as well as the fixed alternatives, we derive the limiting distributions of the test statistics. As the four types of test statistics could degenerate when the time-varying mean, variance, long-run variance of errors, covariates, and the intercept lie in certain hyperplanes, we show the bootstrap-assisted tests are consistent under both degenerate and non-degenerate scenarios. In particular, in the presence of covariates the exact local asymptotic power of the bootstrap-assisted tests can enjoy the same order as that of the classical KPSS test of long memory for strictly stationary series. The asymptotic theory is built on a new Gaussian approximation technique for locally stationary long-memory processes with short-memory covariates, which is of independent interest. The effectiveness of our tests is demonstrated by extensive simulation studies and real data analysis.
\end{abstract}
\textbf{Keywords}:
Long-range dependence,
Locally stationary process, Spurious long memory, Time-varying models

\footnotetext[1]{E-mail addresses: \href{blj20@mails.tsinghua.edu.cn}{blj20@mails.tsinghua.edu.cn}(L.Bai), \href{wuweichi@mail.tsinghua.edu.cn}{wuweichi@mail.tsinghua.edu.cn}(W.Wu)}

\section{Introduction}
Consider the time-varying coefficient linear model 
\begin{align}\label{model1}
    y_{i,n}=\mf x_{i,n}^{\T}\bs \beta(t_i)+e_{i,n},\quad i=1,2,\cdots n,
\end{align}
where the covariate vector $\mf x_{i,n}=(1,x_{i,2,n},...,x_{i,p,n})^\top$ is a $p$-dimensional {\it short-range dependent} (SRD) locally stationary time series and $y_{i,n}$ is the response variable, $t_i = i/n$. At each time point $t_i$, we only observe one realization $(\mf x_{i, n},y_{i,n})$ and no repeated measurement is available. The time-varying regression coefficient function $\bs \beta(\cdot)$ is a $p$-dimensional function with each coordinate a smooth function on $[0,1]$ and  the zero mean error process $(e_{i,n})$ is a possibly {\it long-range dependent} (LRD) or {\it long-memory} time series. More precisely, we assume $(e_{i,n})$ is a locally stationary $I(d)$ process 
i.e., for $1\leq i\leq n$
\begin{align}\label{Locallyerror}
   (1-\mathcal B)^{d} e_{i,n}=u_{i,n},
\end{align}
where $\mathcal B$ is the lag operator, $d\in [0,1/2)$ is the {\it long-memory parameter} and  $u_{i,n}$ is a SRD or {\it short-memory} locally stationary process. The strict definitions of locally stationary and long-memory processes are deferred to \cref{sec:model}. The error model \eqref{Locallyerror} naturally generalizes classical stationary SRD and LRD processes by allowing their generating mechanism to vary with time. Observe that  $(e_{i,n})$ will reduce to the SRD process $(u_{i,n})$ if $d=0$ and will be a LRD process if $d \in (0, 1/2)$. In fact,  when $(u_{i,n})$ is stationary, \eqref{Locallyerror} allows the classical stationary long-memory processes (e.g. FARIMA-GARCH models), which have found extensive application in hydrology (\cite{ZHANG2011121}, \cite{koutsoyiannis2013hydrology}), economics and finance (\cite{caporale2013long}, \cite{unemployment2016}) and many other fields since first introduced by \cite{hurst1951long}. Moreover, model \eqref{model1} admits heteroscedasticity, i.e., the dependence of $u_{i,n}$ on $(\mf x_{r,n})_{r=1}^n$, see \cref{sec:model} for more details.

The time-varying regression model \eqref{model1} with time series errors has attracted enormous attention, see for instance 
\cite{fan2000simultaneous}, 
\cite{zhou2010simultaneous} 
and \cite{chen2018jbes} 
where the errors are assumed to be SRD, and  \cite{kulik2012conditional}, \cite{BeranLongmemory} and \cite{ferreira2018estimation} where LRD errors are considered.   The aforementioned research reveals that nonparametric estimators of the time-varying coefficient ${\bs \beta}(\cdot)$ possess distinct properties under the two scenarios, $d=0$ and $0<d<1/2$.  When $d=0$, consider the local linear estimator of the multivariate coefficient function $ {\bs \beta}(\cdot)$ using the kernel function $K(\cdot)$ and the bandwidth $b_n$, of which the asymptotic behavior rests on the distributions of $(\mf x_{i,n})$ and $(e_{i,n})$. In particular, the order of the deviation $|\hat {\bs \beta}(\cdot)-\bs \beta(\cdot)|$ is determined by the long-memory parameter $d$. 
For $d=0$ and $t\in (0,1)$, \cite{zhou2010simultaneous} shows that  under mild conditions, 
\begin{align}\label{srdbeta}
    \sqrt{nb_n}(\hat {\bs \beta}(t)-{\bs \beta}(t)-b_n^2{\bs \beta}^{\prime\prime}(t)\mu_2/2)\Rightarrow N(0, \phi_0{\bs \Sigma}(t)),
\end{align}
where $\mu_2$ and $\phi_0$ are constants determined by $K(\cdot)$ and ${\bs \Sigma}(t)$ is  determined by the moments of the process $(\mf x_{i,n}e_{i,n})_{i=1}^n$. Meanwhile, for $d>0$ and $p=1$,
Theorem 7.22 in \cite{BeranLongmemory} shows that for stationary $e_{i,n}$ under regularity conditions,
\begin{align}\label{lrdbeta}
    (nb_n)^{1/2-d}(\hat {\bs \beta}(t)-{\bs \beta}(t)-b_n^2{\bs \beta}^{\prime\prime}(t)\mu_2/2)\Rightarrow N(0,V(d)),
\end{align}
where $V(d) = 2c_f \Gamma(1-2d) \sin(\pi d) \int_{-1}^1\int_{-1}^1 K(x)K(y)|x-y|^{2d-1}dxdy$, and $c_f$ is a constant related to the spectral density of errors. Equation \eqref{lrdbeta} shows that for $d>0$ the convergence rate of $\hat {\bs \beta}(t)$ is $(nb_n)^{d-1/2}$, which is much slower than the well-known $(nb_n)^{-1/2}$ convergence rate as given by \eqref{srdbeta} when $d=0$.  Therefore, a crucial problem 
of the statistical inference of model \eqref{model1} is to test
\begin{align}\label{WC-hypo1}
    H_0: d=0~~~\text{versus}~~~ H_A: 0<d<1/2.
\end{align}

The testing problem \eqref{WC-hypo1} for  \eqref{model1} is closely related to the existing tests of `spurious long memory', which refers to the phenomenon that in the presence of regime changes, level shifts or certain deterministic trends, a short memory process could exhibit many properties of a long-memory process, known as the  `spurious long-memory' effects, see for example \cite{giraitis2001testing}, \cite{qu2011test}, and \cite{mccloskey2013memory}. These findings motivate the tests for distinguishing genuine and
 spurious long memory. Among others, 
 \cite{qu2011test}, \cite{preu2013} and \cite{SIBBERTSEN201833} consider testing the null hypothesis of stationary long memory against spurious long memory. Meanwhile, several tests have been introduced to test the null hypothesis of spurious long memory, for which a prevailing approach is to assume a specific and parametric form of non-stationarity, see \cite{shao2006aos}, \cite{harris2008testing}, 
 and \cite{davis2013consistency} among others. Recently, there has been growing interest in detecting long memory in the presence of general non-stationarity, see for example \cite{Detectinglrdependence} which considered locally stationary moving average formulation. 
 In practice, by testing \eqref{WC-hypo1} for model \eqref{model1}, we are able to identify a new type of `spurious long memory' resulting from the {\it misspecification in the conditional mean}. See our data analysis in \cref{subsec:HKdata} where we apply our method to the Hong Kong circulatory and
respiratory data.

The goal of the present work is to test the {\it hypothesis \eqref{WC-hypo1}} under complex and general temporal dynamics, assuming that $(\mathbf x_{i,n})$ and $(u_{i,n})$  belong to the flexible class of locally stationary processes generated by smoothly changing underlying mechanisms. Although some related literature has studied hypothesis \eqref{WC-hypo1} for linear regression models with deterministic covariates, see for example \cite{harris2008testing},
to the best of the authors' knowledge, this work is the first instance investigating testing \eqref{WC-hypo1} for \eqref{model1} in the presence of time series covariates. In the literature, KPSS (Kwiatkowski, Phillips, Schmidt, and Shin, see  \cite{lee1996power}),
R/S (range over standard deviation, see  \cite{hurst1951long}
), V/S (rescaled variance, see \cite{giraitis2003rescaled}), and K/S (which has a  limiting distribution of Kolmogoroff-Smirnoff form, see \cite{lima2004robustness}) tests have been widely used for long memory detection in stationary processes. In this paper, we develop new KPSS, R/S, V/S and K/S-type tests tailored to the non-stationary time series time-varying regression problem \eqref{model1}. The limiting distributions of the test statistics under the null hypothesis, the local and fixed alternatives are then derived.  Our results differ from their stationary counterparts due to the following reasons. (1) Under the null hypothesis, it is well-known that the KPSS, R/S, V/S and K/S tests are built on the partial sum process whose convergence rate is $
n^{-1/2}$. However, the nonparametric estimate $\hat {\bs \beta}(\cdot)$ induces stochastic errors much larger than $n^{-1/2}$ which will lead to different limiting distributions as well as possible degeneracy. (2) Due to the non-stationary errors and covariates, the partial sum processes cannot be approximated by processes with stationary increments, which makes the test statistics non-pivotal. The major contributions of the paper lie in the following three aspects. 

Firstly, our methods are applicable to the locally stationary time series regression, which has found considerable attention in various related fields, see for instance \cite{vogt2012}, \cite{you2019}, \cite{zhou2010simultaneous} and many others. 
 In particular,  the flexible locally stationary framework allows  the error processes to display conditional and unconditional heteroscedasticity that has been increasingly investigated (see \cite{harris2017adaptive} and \cite{cavaliere2020adaptive}) in the context of long-memory models. Both the evolving distributional properties of the locally stationary data and the long-memory properties pose long-standing challenges to the inference of time-varying coefficient linear model \eqref{model1} due to the lack of general Gaussian approximation 
techniques for non-stationary long-memory processes.  For stationary long-memory processes, Gaussian approximation has been studied by for example \cite{dehling1989empirical}. 
Recently, \cite{wu2018} developed Gaussian approximation schemes for a class of locally stationary long-memory linear processes. However, their results cannot accommodate regression problems with time series covariates which requires the analysis of distributional properties of the partial sum process of $(\mathbf x_{i,n} e_{i,n})_{i=1}^n$.  In this paper, we address this issue via a further Gaussian approximation theorem, allowing  $(\mathbf x_{i,n})$ 
and $(e_{i,n})$ to be non-stationary SRD and LRD processes, respectively, with flexible dependence between them.  

Secondly, we develop effective bootstrap approaches which circumvent the difficult estimation of the non-pivotal limiting distributions of the test statistics  under time series non-stationarity. In particular,  the test statistics could degenerate when the time-varying mean, variance, long-run variance of errors and covariates, and the intercept lie in  certain hyperplanes whose geometry cannot be directly identified from the data. Importantly, regardless of the degeneracy of test statistics, our bootstrap procedures are consistent and possess good finite sample properties. 
Furthermore, we show that the exact local asymptotic power of the four types of bootstrap-assisted tests can reach the order $O( \log^{-1} n)$ in the presence of time series covariates, no matter whether the test statistics degenerate under the null hypothesis. This rate coincides with \cite{shao2007local} which studies a similar problem of testing the SRD null hypothesis against LRD local alternatives for  strictly stationary series without covariates.

The rest of the paper is organized as follows. 
Section \ref{notation} introduces necessary notation.
Section \ref{sec:model} formally states the non-stationary LRD model and the related assumptions. Section \ref{sec:4} provides
the test statistics, and establishes the asymptotic results  
via the new Gaussian approximation theory for the product of non-stationary SRD and LRD processes. 
Section \ref{sec:bootstrap} discusses
the  bootstrap algorithms. Section \ref{finite} reports the simulation results and the analysis of Hong Kong circulatory and respiratory data. Section \ref{conclude} provides a brief concluding remark.   In \cref{sec:appendA}, we provide the proof of the new Gaussian approximation theory. 
Detailed proofs, the literature review of KPSS and related tests, the implementation details including the selection of tuning parameters, additional simulation results, data analysis results (including COVID-19 data), and additional algorithms are relegated to the supplement.

\section{Notation}\label{notation}
For a matrix $\mf A = (a_{ij})_{1\leq i \leq n, 1\leq j \leq m} \in \R^{n\times m}$, let $|\mf A| = (\sum_{j=1}^m \sum_{i=1}^n a_{ij}^2)^{1/2}$ and write $\mf A \geq 0$ if $\mf A$ is semi-positive definite. Let $(\mf A)_{(1,1)}$ denote the element of $\mf A$ in the first column and first row.  Notice that when $m=1$, $\mf A$ is a vector. For  $\mf A\geq 0$ with eigendecomposition $\mf A = \mf Q\mf D \mf Q^{\T}$ with  orthonormal matrix $\mf Q$  and  diagonal matrix $\mf D$, the root of $\mf A$  is defined by $\mf A^{1/2} = \mf Q \mf D^{1/2} \mf Q^{\T}$, where $\mf D^{1/2}$ is the elementwise root of $\mf D$. Let $\mf I_p$ denote the $p$-dimensional identity matrix. For a  random matrix $\mf A$, for $q \geq 1$, let $\|\mf A \|_q  = (\E|\mf A|^q)^{1/q}$ denote the $\mathcal{L}^q$-norm of the random variable  $|\mathbf{A}|$  and write $\|\cdot\|=\|\cdot\|_2$ for short. Write $\mf A \in \mathcal{L}^q$ if $\|\mf A \|_q < \infty$.
For a function $f(\cdot)$, write $f \in C[0,1]$ if $f$ is continuous over $[0,1]$, $f \in C^p[0,1]$ if the $p_{th}$ order derivative of $f$ is continuous over $[0,1]$. Write $\mf A \in C[0,1]$ and $\mf A \in C^p[0,1]$ if each element $A_{ij}(\cdot)$ in $\mf A(\cdot)$ is in $C[0,1]$ and $C^p[0,1]$, respectively.
For any kernel function $K(\cdot)$, let $K^*(\cdot)$ denote the jackknife equivalent kernel $2 \sqrt{2} K(\sqrt{2}x) - K(x)$. Denote by $\lfloor x\rfloor$ the largest integer smaller or equal to $x$. For any two positive real sequences $a_n$ and $b_n$, write  $a_n \asymp b_n$ if $\exists\  0 <c < C<\infty$ such that $c<\liminf_{n \to \infty} \frac{a_n}{b_n}<\limsup_{n \to \infty} \frac{a_n}{b_n}<C$. Let $t \wedge s$ denote the smaller value in  $t$ and $s$. Let `$\Rightarrow$' denote convergence in distribution. Write $\lambda$ as Lebesgue measure on $[0,1]$. Let `$:=$' denote `defined as'.

\section{Model assumptions}\label{sec:model}
We start by introducing the time series time-varying regression model \eqref{model1} in detail. Recall model \eqref{model1}  has the following form
 \begin{equation}
       y_{i,n}=\mathbf{x}_{i,n}^{\T} \boldsymbol{\beta}(t_i)+e_{i,n}, \quad i=1, \ldots, n, ~~
      \text{ where } ~ (1-\B)^d e_{i,n} = u_{i,n}, \quad d \in [0,1/2).
  \end{equation}

We assume that the process $(u_{i,n})_{i=-\infty}^n$  and the covariate process $(\mf x_{i,n})_{i=1}^n$ have the form 
  \begin{align}\label{uxmodel}
      u_{i,n} = H(t_i,\FF_i), \quad \mf x_{i,n}=\mf W(t_i,\FF_i),
  \end{align}
  where $\FF_i=(\varepsilon_{-\infty},...,\varepsilon_i)$, $(\varepsilon_{i})_{i\in \mathbb Z}$ are $i.i.d.$ random variables, $H$ and $\mf W=(W_1,...,W_p)^\top$ are measurable functions such that $H:(-\infty,1]\times \mathbb R^{\mathbb Z}\rightarrow \mathbb R$,  $W_s:[0,1]\times \mathbb R^{\mathbb Z}\rightarrow \mathbb R$, $2\leq s\leq p$, while $W_1$ is fixed to be $1$ corresponding to the intercept of the regression.  Define $(\varepsilon^{\prime}_i)_{i \in \mathbb{Z}}$ as an $i.i.d.$ copy of $(\varepsilon_i)_{i \in \mathbb{Z}}$, and for $j \geq 0$, let $\F^{*}_j = (\F_{-1},\varepsilon^{\prime}_0, \varepsilon_1,\cdots, \varepsilon_{j-1}, \varepsilon_{j})$. For any  (vector) process $\mf L(t,\FF_i)$, it is said to be $\mathcal L^q$ stochastic Lipschitz continuous in the interval $\mathcal I$ (denoted by $\mathbf L\in \mathrm{Lip}_q(\mathcal I)$) if for $t_1,t_2\in \mathcal I$, there exists a constant $M>0$ such that
  \begin{align}
      \|\mathbf L(t_1,\FF_0)-\mathbf L(t_2,\FF_0)\|_q\leq M|t_1-t_2|.
  \end{align}
We say the process $\mf L(t,\FF_i)$ is {\it locally stationary} (LS) on $\mathcal I$ if $\mf L(t,\FF_i)\in \mathrm{Lip}_q(\mathcal I)$ for some $q\geq 2$. Write $\mathrm{Lip}_q=\mathrm{Lip}_q([0,1])$ for short. The locally stationary process offers a flexible nonparametric device to
characterise the complex temporal dynamics of the error and covariate processes in \eqref{uxmodel}, 
which is based on Bernoulli shift processes and leads to a general framework of nonlinear processes, see \cite{wu2005nonlinear}. Other formulations of locally stationary processes include \cite{dahlhaus1997} and \cite{nason2000wavelet}. 
See \cite{dahlhaus2019bej} for a comprehensive review. The physical dependence measure of the nonlinear filter $\mf L \in \mathcal{L}^q$ ($q>0$) over the interval $\mathcal I$
    is defined by
    \begin{equation}
      \delta_q(\mf L, k, \mathcal I) = \underset{t \in \mathcal {I}}{\sup}\|\mf L(t,\F_k) - \mf L(t, \F_k^*) \|_q.
    \end{equation}
The physical dependence measure $\delta_q(\mathbf L,k, \mathcal I)$ quantifies the influence of the input $\varepsilon_0$ on the output $\mf L(t,\FF_k)$ over the interval $\mathcal I$. Observe that $\delta_q(\mf L,k, \mathcal I)=0$ if $k<0$. Write $\delta_q(\mf L,k)=\delta_q(\mf L,k, [0,1])$ for short. We proceed to define SRD and LRD non-stationary processes for non-stationary time series.

\begin{definition}\label{def:SRDLRD}
The univariate process $( L(t,\FF_i))_{i=-\infty}^\infty, t\in \mathcal I$ is said to be SRD  if  
$$ \sum_{j=-\infty}^{\infty} \sup_{t,s \in \mathcal I} \left|\operatorname{Cov}\left( L(t, \FF_0),   L(s, \FF_{j})\right)\right|<\infty, $$ 
and a LRD process otherwise. The $p$-dimensional vector process $( \mf L(t,\FF_i))_{i=-\infty}^\infty, t\in \mathcal I$ is SRD if each of its component is SRD, and is otherwise LRD. 

\end{definition}

Definition \ref{def:SRDLRD} distinguishes the SRD and LRD by the uniform summability of covariance, which naturally extends the traditional definition of long memory in second-order stationary univariate processes, see for example Condition III in Chapter 2 of \cite{pipiras2017long}.  The uniform long-memory definition has been introduced to define LRD and SRD non-stationary time series in \cite{wu2018}.
Various definitions of LRD stationary processes could be found in  \cite{pipiras2017long} 
and definitions of LRD locally stationary processes are discussed in \cite{BERAN2009900}, \cite{Detectinglrdependence} and \cite{ferreira2018estimation} among others.
In this paper, we posit the following assumptions.
  \begin{assumption}\label{assumptionHp}
The zero mean SRD process $(u_{i,n})_{i=-\infty}^{n}$ satisfies
    \begin{enumerate}[label=(a\arabic*)', itemsep=2pt,topsep=0pt,parsep=0pt]
      \item   $H(t,\FF_0) \in \mathrm{Lip}_2(-\infty,1]$, $\E H(t,\FF_0)=0$, and 
   $ \sup_{ t \in (-\infty,1] }\left\|H\left(t, \mathcal{F}_{0}\right)\right\|_{4}<\infty
   $.\label{A:H_long}
   \item $ \delta_4(H, k,(-\infty,1]) = O(\chi^k)$ $\text{for some}~ \chi \in (0,1)$. \label{A:H_delta_long}
      \item Define the long-run variance function as
     $
      \sigma^{2}_H(t):=\sum_{k=-\infty}^{\infty} \operatorname{Cov}\left(H\left(t, \mathcal{F}_{0}\right), H\left(t, \mathcal{F}_{k}\right)\right)$,  $t \in (-\infty, 1],
    $
     which satisfies that $\underset{t \in (-\infty,1] }{\inf} \sigma_H^2(t)> 0$ and  $\underset{t \in (-\infty,1] }{\sup}\sigma_H^2(t) < \infty.$ \label{A:H_lrv}
      \item $\sigma_H^2 (\cdot)$ is twice continuously differentiable on $[0,1]$. \label{A:H_smooth_long}
     \end{enumerate} 
  \end{assumption}
Condition \ref{A:H_long} imposes the assumptions of local stationarity and finite forth moment on the innovations $(u_{i,n})$.  Condition \ref{A:H_long} will be satisfied if $\sup_{t \in (-\infty, 1]} \|\frac{\partial}{\partial t}H(t,\FF_0)\| <\infty$. Condition \ref{A:H_delta_long} ensures that the innovations $(u_{i,n})$ are SRD and satisfy geometric measure contraction (GMC). 
Conditions \ref{A:H_lrv} and \ref{A:H_smooth_long} guarantee that the innovations $(u_{i,n})$ have a finite, non-degenerate and smooth long-run variance. \par 
Notice that under the null hypothesis $d=0$, the error process $(e_{i,n})$ reduces to $(u_{i,n})$, which indicates that $(e_{i,n})$ is a SRD process. When $d > 0$, $(e_{i,n})$ is generated by a binomial weighted combination of $u_{i,n}=H(t_i,\FF_i)$  starting from the infinite past ($i=-\infty$). To stress that $(e_{i,n})$ is a LRD process under the alternative hypothesis with respect to $d$, in the remaining of this article we write $e_{i,n}$ as $e_{i,n}^{(d)}$ when $d>0$, i.e., $e_{i,n}^{(d)}  = (1-B)^{-d} u_{i,n} = \sum_{k=0}^{\infty} \psi_k(d) u_{i-k}$, $\psi_j(d) = \Gamma (j+d) / [\Gamma(d) \Gamma (j+1)]$. We further write $e_{i,n}^{(d)} = H^{(d)}(t_i, \F_{i})$, where $ H^{(d)}(t, \F_{l}) = \sum_{k=0}^{\infty} \psi_k(d) H(t - t_k, \F_{l-k})$. The following Proposition \ref{Prop31} elaborates that the physical dependence measure of $(e^{(d)}_{i,n})$  relies on $d$. 

\begin{proposition}\label{Prop31}
Under Assumption \ref{assumptionHp},  we have uniformly for $l \geq 0$, $0 < d < 1/2$,
\begin{align}
  \delta_p(H^{(d)}, l, (-\infty, 1]) = O\{(1+l)^{d-1}\}.
\end{align}
\end{proposition}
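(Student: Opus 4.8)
\emph{Proof strategy.} The plan is to exploit the moving-average representation $H^{(d)}(t,\F_l)=\sum_{k\ge 0}\psi_k(d)\,H(t-t_k,\F_{l-k})$ recorded just before the statement: first turn the physical dependence measure of $H^{(d)}$ into a $\psi$-weighted sum of the physical dependence measures of $H$, and then close with an elementary convolution bound. For the first step I would compute the coupling difference $H^{(d)}(t,\F_l)-H^{(d)}(t,\F_l^{*})$ termwise. For $k>l$ the index $l-k$ is negative, so $\F_{l-k}^{*}=\F_{l-k}$ and the $k$-th summand vanishes; since the series defining $H^{(d)}$ converges in $\mathcal L^{p}$ (part of $H^{(d)}$ being well defined, cf.\ \cref{prop:3.1}), subtracting the two series is legitimate and yields, in $\mathcal L^{p}$,
\[
H^{(d)}(t,\F_l)-H^{(d)}(t,\F_l^{*})=\sum_{k=0}^{l}\psi_k(d)\bigl[H(t-t_k,\F_{l-k})-H(t-t_k,\F_{l-k}^{*})\bigr].
\]
Because $t\le 1$ and $t_k\ge 0$, every argument $t-t_k$ lies in $(-\infty,1]$, so the triangle inequality and the definition of $\delta_p$ give
\[
\bigl\|H^{(d)}(t,\F_l)-H^{(d)}(t,\F_l^{*})\bigr\|_{p}\le\sum_{k=0}^{l}|\psi_k(d)|\,\delta_{p}\bigl(H,l-k,(-\infty,1]\bigr),
\]
a bound free of $t$; taking the supremum over $t\in(-\infty,1]$ gives the same bound for $\delta_{p}(H^{(d)},l,(-\infty,1])$.

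For the second step I would insert two ingredients. The fractional binomial weights satisfy $\psi_k(d)=\Gamma(k+d)/[\Gamma(d)\Gamma(k+1)]\sim k^{d-1}/\Gamma(d)$, hence $|\psi_k(d)|\le C(1+k)^{d-1}$ for all $k\ge 0$ with $C=C(d)$; and by \cref{assumptionHp} (together with monotonicity of $\mathcal L^{q}$-norms) $\delta_{p}(H,m,(-\infty,1])\le C\chi^{m}$ for all $m\ge 0$. This gives $\delta_{p}(H^{(d)},l,(-\infty,1])\lesssim\sum_{k=0}^{l}(1+k)^{d-1}\chi^{l-k}$. To bound this convolution uniformly in $l$, I would split at $k=\lfloor l/2\rfloor$: for $k\le l/2$ use $\chi^{l-k}\le\chi^{l/2}$ and $\sum_{k\le l/2}(1+k)^{d-1}\lesssim(1+l)^{d}$ (valid since $d>0$), so this part is $\lesssim(1+l)^{d}\chi^{l/2}=o\{(1+l)^{d-1}\}$; for $k>l/2$ use $(1+k)^{d-1}\le(1+\lfloor l/2\rfloor)^{d-1}\lesssim(1+l)^{d-1}$ (valid since $d-1<0$) and $\sum_{k>l/2}\chi^{l-k}\le(1-\chi)^{-1}$, so this part is $\lesssim(1+l)^{d-1}$. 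Adding the two pieces yields the claimed $O\{(1+l)^{d-1}\}$, uniformly in $l\ge 0$.

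The only genuinely delicate point is the termwise subtraction of the two (not absolutely convergent) series defining $H^{(d)}(t,\F_l)$ and $H^{(d)}(t,\F_l^{*})$; this is justified because each converges in $\mathcal L^{p}$ and, for every partial-sum index exceeding $l$, the difference of the partial sums is exactly the displayed finite sum, so the $\mathcal L^{p}$-limit of the difference is that finite sum. Everything past that — the binomial asymptotics $\psi_k(d)\asymp(1+k)^{d-1}$ and the split-at-$l/2$ convolution estimate — is routine, and the final bound is manifestly uniform in $t$ since no intermediate constant depends on $t$.
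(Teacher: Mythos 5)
Your argument is correct and follows essentially the same route as the paper: bound $\delta_p(H^{(d)},l,(-\infty,1])$ termwise via the MA representation by the convolution $\sum_{k=0}^{l}\psi_k(d)\,\delta_p(H,l-k,(-\infty,1])$, then use $\psi_k(d)\asymp(1+k)^{d-1}$ together with the geometric decay of $\delta_p(H,\cdot)$ to get $O\{(1+l)^{d-1}\}$. The only difference is cosmetic: the paper cites Lemma 3.2 of Kokoszka and Taqqu (1995) for the convolution asymptotics, whereas you verify it directly with the split at $k=\lfloor l/2\rfloor$, which is a perfectly valid (and self-contained) substitute.
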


Our formulation of $(e_{i,n})$ in \eqref{Locallyerror} allows for a wide class of  non-stationary SRD and LRD processes under $H_0$ and $H_A$, respectively, including the following examples.

\begin{example}[Linear  locally stationary process]\label{ex1}
Consider the time-varying FARIMA($p,d,q$) model $(0 < d<1/2)$ (recall that $u_{j,n}=H(t_j,\FF_j)$, $-\infty \leq j \leq n$)
\begin{align}
   &(1-\mathcal{B})^{d}e_{i,n}=u_{i,n}, ~ i = 1,2,\cdots, n,\text{with }\Phi^{p}(\mathcal{B}, t)H(t,\FF_j)=  \Theta^{q}(\mathcal{B}, t) \varepsilon_{j},\nonumber
\end{align}
where $t \in (-\infty,1]$, $j\in \mathbb{Z}$, $\Phi^{p}(z, t)=1+\phi_{1}(t) z+\cdots+\phi_{p}(t) z^{p}$ and $\Theta^{q}(z, t)=1+\theta_{1}(t) z+\cdots+\theta_{q}(t) z^{q}$ are
polynomials with degrees $p$ and $q$, and the random variables $(\varepsilon_{i})_{i\in\mathbb Z}$ are $i.i.d.$ with mean $0$ and
variance $1$.  Assume that  for $t \in (-\infty,1]$, $\{\phi_i(t), 1\leq i \leq p\}$ and $\{\theta_j(t), 1 \leq j \leq q\}$ are twice differentiable,  $\Phi^{p}(z, t)$ and $\Theta^{q}(z, t)$ do not share the same roots, and $\Phi^{p}(z, t)$ does not have roots in the unit disk $\{|z| \leq 1\}$. Then there exists real-valued differentiable functions $(a_i(t))_{i\geq 0}$ such that $A(z,t) =\Theta^{q}(z, t)/\Phi^{p}(z, t) = \sum_{i=0}^{\infty} a_{i}(t) z^i$, where for $t\in (-\infty ,1]$, $|a_j(t)|$ and $|a_j^{\prime}(t)|$ are summable. Consequently,  we have the $\text{MA}(\infty)$ representation 
 \begin{equation}\label{eq:linear}
 e_{i,n}=(1-\mathcal B)^{-d}u_{i,n}=(1-\mathcal B)^{-d}\sum_{j=0}^\infty a_j(t_i)\mathcal B^j\varepsilon_{i}:=\sum_{j=0}^{\infty} b_{j,i}\varepsilon_{i-j}.
  \end{equation}
  
  When $d=0$, it follows that $b_{j,i} = a_j(t_i)$.
    Thus, $(e_{i,n})$ is SRD according to Definition \ref{def:SRDLRD}. When $d > 0$, by Lemma 3.2 of \cite{KOKOSZKA199519},  $b_{j,i} =\sum_{l=0}^j \psi_l a_{j-l}(t_{i-l})= L_i(j)j^{d-1}$, where $L_i(\cdot)$ is a slowly varying function for each $i$.
  Suppose for $1 \leq i \leq n$, $|L_i(j)| \leq L(j)$ for some slowly varying function $L(\cdot)$.  Noticing that $\operatorname{Cov}(e_{i,n}, e_{i+k,n}) = \sum_{j=0}^{\infty} b_{j,i}b_{j+k, i+k}$. By Proposition 2.2.9 in \cite{pipiras2017long}, $  \sup_{1 \leq i \leq n}|\operatorname{Cov}(e_{i,n}, e_{i+k,n})|$  is of order $k^{2d-1}$ for all $i \in \mathbb{Z}$. Hence, \eqref{eq:linear} is LRD according to Definition \ref{def:SRDLRD}.
  \end{example}

  \begin{example}[Nonlinear locally stationary process]
  Consider the time-varying ARFIMA($p,d,q$)-GARCH($1,1$) process $(1-\B)^d e_{i,n} =u_{i,n}, 1\leq i\leq n$,  where $u_{j,n}=H(t_j,\FF_j)$ and
  \begin{align}
 \Phi^{p}(\mathcal{B}, t)H(t, \FF_j)= \Theta^{q}(\mathcal{B}, t)v_j(t), ~~ v_j(t) =\varepsilon_{j} \sigma_{j}(t),~~j\in \mathbb Z, ~t\in (-\infty,1],\nonumber
\end{align}
where $\sigma^2_{j}(t) = c(t)+\alpha(t)v_{j-1 }^{2}(t) + \beta(t) \sigma^2_{j-1}(t)$, $(\varepsilon_{i})_{i\in\mathbb Z}$ are $i.i.d.$ random variables with mean 0 and variance 1, $c(t), \alpha(t), \beta(t)$ are smooth non-negative functions, and $\Phi^{p}(z, t)=1+\phi_{1}(t) z+\cdots+\phi_{p}(t) z^{p}$ and $\Theta^{q}(z, t)=1+\theta_{1}(t) z+\cdots+\theta_{q}(t) z^{q}$ are
polynomials with degrees $p$ and $q$.  Assume that  for $t \in (-\infty,1]$, $\{\phi_i(t), 1\leq i \leq p\}$, $\{\theta_j(t), 1 \leq j \leq q\}$, $c(t)$, $\alpha(t)$ and $\beta(t)$ are twice differentiable,  $\Phi^{p}(z, t)$ and $\Theta^{q}(z, t)$ do not share the same roots, and $\Phi^{p}(z, t)$ does not have roots in the unit disk $\{|z| \leq 1\}$, $(\varepsilon_{i})_{i\in\mathbb Z} \in \mathcal{L}^{8}$,  
$\sup _{t \in (-\infty,1]} c(t) <\infty$ and $\sup _{t \in (-\infty,1] }\left\|\alpha (t) \varepsilon_{t}^{2}+\beta(t)\right\|_{4}<1$. Then, by Example 2 of \cite{wu2011gaussian},
\ $(u_{i,n})_{i=-\infty}^{n}$ satisfy \ref{A:H_delta_long} of Assumption \ref{assumptionHp}. When $d > 0$,
Definition \ref{def:SRDLRD} can be verified similarly as given in Example \ref{ex1}, since $(v_{i}(t))_{i\in \mathbb Z }$ are white noises.

  \end{example}

\section{Main results} \label{Test} \label{sec:4}


Since $(e_{i,n})$ is not observable in  \eqref{model1}, we propose to test $H_0$ based on nonparametric residuals. Specifically, we adopt the local linear approach (see for instance \cite{fan1993local} and \cite{fan1996local}) to estimate ${\bs \beta}(t)$ in \eqref{model1}, i.e.,
\begin{equation}\label{eq:loclin}
(\hat{\boldsymbol{\beta}}_{b_{n}}(t), \hat{\boldsymbol{\beta}}_{b_{n}}^{\prime}(t))=\underset{\boldsymbol \eta_{0},\boldsymbol \eta_{1} \in \mathbb{R}^{p}}{\arg \min}\sum_{i=1}^{n}\{y_{i, n}-\mathbf{x}_{i,n}^{\T} \boldsymbol \eta_{0}- \mathbf{x}_{i,n}^{\T}  \boldsymbol \eta_{1}(t_{i}-t)\}^{2} K_{b_{n}}(t_{i}-t),
\end{equation}
where $K(\cdot)$ is a kernel function with finite support $[-1,1]$ and $b_n$ is a bandwidth and $K_{b_n}(\cdot)=K(\cdot/b_n)$. To further eliminate the bias term involving $\boldsymbol \beta^{\prime \prime }(\cdot)$, we use the jackknife bias-corrected estimator in \cite{wu2007inference} : 
\begin{equation}
  \tilde{\boldsymbol{\beta}}_{b_{n}}(t)=2 \hat{\boldsymbol{\beta}}_{b_{n} / \sqrt{2}}(t)-\hat{\boldsymbol{\beta}}_{b_{n}}(t).\label{eq:jack}
  \end{equation}
Then, we obtain the nonparametric residuals $(\tilde e_{i,n})$, i.e.,
$
  \tilde e_{i,n} = y_{i,n} - \mf x_{i,n}^{\T} \tilde{\boldsymbol \beta}(t_i).\nonumber
$ For simplicity, define $\tilde S_{r,n} =\sum_{i=\lfloor nb_n \rfloor+1}^r \tilde{e}_{i,n}$, $r=\lfloor nb_n \rfloor+1,\cdots n-\lfloor nb_n\rfloor$.  We consider four well-known types of partial sum based test statistics, which are KPSS, R/S, V/S and K/S-type tests built on $(\tilde e_{i,n})$. 

\begin{enumerate}[itemsep=2pt,topsep=0pt,parsep=0pt]
    \item KPSS-type statistic \begin{equation}\label{eq:KPSS}
    T_n = \frac{1}{n(n - 2\lf nb_n\rf)}\sum_{r=\lfloor nb_n \rfloor+1}^{n-\lfloor nb_n\rfloor} \left(\tilde S_{r,n}\right)^2.\end{equation}
    \item R/S-type statistic $
 Q_n = \max_{\lf nb_n \rf + 1 \leq k \leq n - \lf nb_n \rf } 
 \tilde S_{k,n} - \min_{\lf nb_n \rf + 1 \leq k \leq n - \lf nb_n \rf } \tilde S_{k,n}.
$
 \item V/S-type statistic $
  M_n = \frac{1}{n(n - 2\lf nb_n\rf)}\left\{\sum_{k=\lf nb_n \rf + 1 }^{n-\lf nb_n \rf } \tilde S_{k,n} ^2 - \frac{1}{n - 2\lf nb_n \rf}\left(\sum_{k=\lf nb_n \rf + 1}^{n-\lf nb_n \rf } \tilde S_{k,n} \right)^2\right\}.
$
\item K/S-type statistic  $
    G_n =  \max_{\lf nb_n \rf + 1 \leq k \leq n - \lf nb_n \rf } 
 \left|\tilde S_{k,n} \right|.
$
\end{enumerate}
The above four types of tests have been widely applied to the detection of long memory and many other important problems (e.g. unit root testing) for stationary time series. We refer to Section \ref{SectionKPSS} of the online supplement for the complete literature review and applications. 
To the best of our knowledge, all the existing work on the KPSS, R/S, V/S, and K/S tests considers the statistics based on the original series $(e_{i,n})$ or parametric residuals (e.g., the residuals obtained by the removal of the sample mean), and is therefore not applicable to the time-varying coefficient model \eqref{model1}.
Meanwhile, it is well-known that the nonparametric estimators have a slower convergence rate than the corresponding parametric estimators. 
Therefore, the asymptotic properties of our nonparametric residual-based KPSS and related tests will be very different from their parametric residual-based or original series-based counterparts; in fact we show that the tests can degenerate  under certain scenarios (see Theorem \ref{thm:null_dist}) and thus bootstrap procedures adaptive to the possible degeneracy are proposed for implementation, see Algorithms \ref{algorithm}, also Algorithms \ref*{trend_algorithms} and \ref*{algorithms} of the online supplement. In this article, we use the term `{\it KPSS and related tests}' to represent the four types of tests. 

\subsection{Assumptions}
For the sake of brevity, in this section we only discuss the KPSS-type test statistic in detail, and summarize the results of KPSS-related test statistics  in Remark \ref{rm:RS_limit}. In order to investigate the asymptotic properties of $T_n$ defined by  \eqref{eq:KPSS} in the presence of time series covariates, we introduce the following assumptions. 

   \begin{assumption}
    The kernel function $K(\cdot)$ is continuous, symmetric and supported on $[-1,1]$. 
    \label{A:K}
 \end{assumption}

 \begin{assumption}
 Each coordinate of $\bs \beta(\cdot)$, i.e., $\beta_i(\cdot)$ for $i = 1,\cdots,p$, lies in $C^3[0,1]$.\label{A:beta}
 \end{assumption}  

\begin{assumption}\label{Ass-U} 
Let $\mf U(t, \FF_i)  = H(t, \FF_i)\mf W(t, \FF_i)$, $i = 1, \cdots, n$, s.t.
\begin{enumerate}[label=(A\arabic*)] 
  \item $\mf U(t, \FF_i) \in \mathrm{Lip}_2$, $\sup_{t \in [0,1]} \|\mf U(t, \FF_i) \|_4< \infty.$ \label{A:U}
   \item Short-range dependence: $\delta_4(\mf U, k) = O(\chi^k)~ \text{for some} ~\chi \in (0,1)$. \label{A:U_delta}
    \item The smallest eigenvalue of 
    $  {\bs \Sigma}(t):=\sum_{j=-\infty}^{\infty} \mathrm{Cov}\left\{\mathbf{U}\left(t, \mathcal{F}_{0}\right), \mathbf{U}\left(t, \mathcal{F}_{j}\right)\right\}$, $ t\in [0,1],
     $(i.e., the long-run covariance matrix) 
     is bounded away from 0 on $[0,1]$.\label{A:U_lrv}

 \end{enumerate}
\end{assumption}
Assumption \ref{Ass-U} is standard for local linear time series regression, see for instance \cite{zhou2010simultaneous}. Notice that the first element of $\mf U(t, \FF_i)$ is $H(t, \FF_i)$. When $p=1$ (the time-varying trend model), Assumption \ref{Ass-U} reduces to \ref{A:H_lrv} in Assumption \ref{assumptionHp} with $t \in [0,1]$.

Define $\mf M(t) := \mathbb{E} (\mathbf{W}(t,\mathcal{F}_0)\mathbf{W}(t,\mathcal{F}_0)^{\T})$, $t \in [0,1]$. Notice that the first element of $\mf W$ is $1$. Write $\boldsymbol{ \mu}_W(\cdot) := (1, \mu_{W,2}(\cdot), \cdots, \mu_{W,p}(\cdot))^{\T} :=\E(\mf W(t,\FF_0))$. For $p \geq 2$,  Let $x_{i,j, n}$ denote the $j$th element in $\mf x_{i,n}$, and $\mf x_{i,n}^{(-1)} := (x_{i,2,n}, \cdots, x_{i,p,n})^{\T}$. Define $\bs  \mu_W^{(-1)}(\cdot) := (\mu_{W, 2}(\cdot), \cdots, \mu_{W,p}(\cdot))^{\T}$,  $\mf W^{(-1)}(\cdot, \cdot) := (W_2(\cdot, \cdot),\cdots, W_p(\cdot, \cdot))^\top$.

\begin{assumption}\label{Ass-W}   
  The following conditions hold for the covariates when $p\geq 2$:
\begin{enumerate}[label=(B\arabic*),itemsep=2pt,topsep=0pt,parsep=0pt] 
  \item The smallest eigenvalue of
$\mf M(t)$
is bounded away from 0 on $[0, 1]$. \label{A:Mt}
\item {$\mf M(\cdot) \in C^1[0,1]$, $\boldsymbol \mu_W (\cdot) \in C^1[0,1]$}.\label{A:Mt_smooth}
\item  $\mathbf{W}\left(t, \mathcal{F}_{i}\right) \in \mathrm{Lip}_{2}, \text { and } \sup _{t \in [0,1]}\left\|\mathbf{W}\left(t, \mathcal{F}_{i}\right)\right\|_{8}<\infty$, $i = 1, \cdots, n$.\label{A:W}
\item $\delta_{8}(\mf W^{(-1)},k)=O(\chi^k)$~ \text{for some}~ $ \chi \in (0,1)$.\label{A:W_delta}
\item $\E(H(t_j,\FF_j)|\mf W(t_j,\FF_j)) = 0$ for $j = 1,2,\cdots,n,$ a.s..\label{A:HW}
\end{enumerate}
\end{assumption}
  Condition \ref{A:Mt}  ensures that there is no multicolinearity among the explanatory variables.  Assumption \ref{A:Mt_smooth} guarantees that the $\mf M(\cdot)$ and $\bs \mu_W(\cdot)$ have continuous derivatives. Assumption \ref{A:W} requires that the covariates are locally stationary.
 Condition \ref{A:W_delta} imposes that each component of $\mf W^{(-1)}(\cdot, \cdot)$ is SRD. Condition \ref{A:HW} assumes that $\mf x_{i,n}$ is a $p$-dimensional random vector uncorrelated with innovations,  which is necessary for model identification.  Our assumptions are very mild in the sense that we allow nonlinearity and heteroscedasticity for the covariates and errors, as well as the correlation between $\mf x_{i,n}$ and $e_{i,n}$. Assumptions \ref{Ass-U} and \ref{Ass-W} can be verified using similar arguments in \cite{wu2009quantile}. When $p = 1$, we use the convention that $\mf x_{i,n}^{(-1)} = \emptyset$, $\mf W^{(-1)}(\cdot, \cdot)  = \emptyset$, and $\bs \mu_W^{(-1)}(\cdot) = \emptyset$, $\mf M(\cdot)=\bs \mu_W(\cdot)=1$. Therefore Assumption \ref{Ass-W} always hold in this case.
\begin{assumption}\label{nondeg:null}
  $\lambda(\boldsymbol \Sigma^{1/2}(u) \mf{M}^{-1}(u)\boldsymbol \mu^{\T}_W(u) \neq (\sigma_H(u), 0, \cdots, 0)^{\T}) > 0$.
\end{assumption}
The test statistics will degenerate under the null hypothesis when the parameters of the regression model \eqref{model1} lie in  the hyperplane  $\{u\in[0,1]: \boldsymbol \Sigma^{1/2}(u) \mf{M}^{-1}(u)\boldsymbol \mu^{\T}_W(u)= (\sigma_H(u), 0, \cdots, 0)^{\T}\}$. \cref{nondeg:null} excludes such situation.

\subsection{Asymptotic theory}
The following theorem establishes the asymptotic distribution of the KPSS-type statistic $T_n$ \eqref{eq:KPSS} under the null hypothesis.

\begin{theorem}\label{thm:null_dist}
  Let Assumptions \ref{A:K}, \ref{A:beta}, \ref{Ass-U} and  \ref{Ass-W} be satisfied, assuming $nb_n^6 \to 0$, $nb_n^{7/2}/(\log n)^4\to \infty$, we have that under the null hypothesis:
\par
(i) If  Assumption \ref{nondeg:null} holds, 
    \begin{align} \label{Tnlimit}
         T_n\Rightarrow \int_0^1 U^2(t) dt,
    \end{align}
    where $U(t)$ is a zero mean continuous Gaussian process with covariance function
    \begin{align}
      \E(U( r)U(s)) =: \gamma( r,s) &=\int_0^{ r\wedge s}\sigma_H^2 (u) du  - 2 \int_0^{ r\wedge s}\{\boldsymbol \mu^{\T}_W(u)\mf{M}^{-1}(u)\boldsymbol \Sigma^{1/2}(u)\}_1\sigma_H (u) du\notag\\ 
      &+  \int_0^{ r\wedge s} \boldsymbol \mu^{\T}_W(u)\mf{M}^{-1}(u)\boldsymbol \Sigma(u)\mf{M}^{-1}(u)\boldsymbol \mu_W(u) du, \quad  r,s \in [0,1],  
         \end{align}
         where  $\{\cdot\}_1$ denotes the first element of a vector.
  \par (ii) If  Assumption \ref{nondeg:null} fails, then
      $
        s_1^{-1}(T_n/b_n-s_2 )\Rightarrow \chi^2_1, 
     $
      where $s_1$ and $s_2$ are constants, i.e., $s_1 = 2\sigma^2_H(0) \int_0^1 \left(\int_{v-1}^1 K^*(t) dt\right)^2 dv$, and $s_2 = 2\int_{0}^1 \sigma^2_H(t) dt \int_0^1 \left(\int_{v}^1 K^*(t) dt\right)^2 dv$.

\end{theorem}

Theorem \ref{thm:null_dist} reveals that for the time-varying coefficient model with time series covariates, the limiting distribution of  $T_n$ depends on the time-varying mean and covariance matrix of the covariates $\mf x_{i,n}$ as well as the long-run covariance matrix of $\mf x_{i,n} e_{i,n}$. Theorem \ref{thm:null_dist} is very general since it posits neither the specific form of heteroscedasticity nor the parametric form of the errors. When Assumption \ref{nondeg:null} is violated, (ii) shows that  $T_n$  degenerates with asymptotic variance $2s_1^2 b_n^2$. After standardization, it converges to $\chi^2_1$ in distribution. 
 The results of R/S, V/S and K/S follow similarly.  An  important scenario that $T_n$ degenerates under $H_0$ (i.e., $d=0$) is the following time-varying trend model corresponding to $p=1$,  i.e., 
  \begin{equation}
    \begin{aligned}
     y_{i, n}= \beta_1\left(t_i \right) + e_{i, n}, \quad i=1, \ldots, n, ~\text{with}~
     (1-\B)^d e_{i,n} = u_{i,n}, \quad d \in [0,1/2).
    \end{aligned}
    \label{eq:long_memory1}
 \end{equation}

\begin{remark}
 Theorem \ref{thm:null_dist} and other theoretical results in this paper are valid for $p=1$, with $\bs \beta(t)$ replaced by $\beta_1(t)$, $\bs \Sigma(t)$ replaced by $\sigma_H^2(t)$, $\bs \mu_W(t)$ replaced by $1$.
\end{remark}

\subsubsection{Gaussian approximation}
Since the KPSS and related test statistics are constructed based on the partial sum process, to derive the asymptotic properties of the test statistics under the alternative hypothesis, we first study the Gaussian approximation of $\sum_{i=1}^r \mf x_{i,n} e_{i,n}^{(d)}$, $1 \leq r \leq n$, which is the partial sum of the product of a SRD and a LRD time series under $H_A$. 
Though  Gaussian approximation theory for stationary processes (
see for instance
\cite{wu2007strong},
\cite{dehling1989empirical},
 \cite{wu2006invariance} and the reference therein) has been successfully established and widely applied to many fields of statistics, there are only a few results of Gaussian approximation for locally stationary processes. Among them, \cite{wu2011gaussian} established a flexible Gaussian approximation framework for locally stationary SRD processes, which has served as a fundamental key to the inference of SRD (piecewise) locally stationary processes and functional time series, see for instance \cite{chen2015function} and \cite{wu2018gradient}.  
\cite{wu2018} proposed a Gaussian approximation scheme for a class of locally stationary linear LRD processes. However, all the existing Gaussian approximation approaches are not applicable to the partial sum process of the product series  $ (\mf x_{i,n} e_{i,n}^{(d)})$, which serves as the crucial ingredient for establishing the limiting distribution of $T_n$ under $H_A$.
To this end, we shall provide a general Gaussian approximation theorem for the product of LRD and SRD processes. In the remaining of this paper, let $ d_n = c/ \log n$, where $c$ is a positive constant. We substitute $d$ with $d_n$  to differentiate the notation under the fixed alternatives $(d>0)$ and that under the local alternatives $(d=c/\log n)$. 
 
\begin{theorem}\label{thm:fixlocal}
  Under Assumptions \ref{assumptionHp} and \ref{Ass-W}, on a richer probability space, we have:\par
  (i) There exists $\mf R_{k, n}=\sum_{j=0}^{\infty} \boldsymbol \mu_W(t_k)\psi_j(d)\sigma_H \left(t_{k-j}\right) v_{k-j}$, where the random variables $(v_i)_{i \in \mathbb Z}$ are $i.i.d.$ $N(0,1)$, s.t.
   \begin{align}
    \max_{\nb + 1 \leq r \leq n - \nb} \left|\sum_{i = \nb +1}^r (\mf x_{i,n} e_{i,n}^{(d)} - \mf{R}_{i,n}) \right|  = \Op(\sqrt{n}(\log n)^d + n^{1+\alpha_0(d-1/2)}),
  \end{align}
   where $ \alpha_0  \in (1, 4/3) $ and therefore $n^{1+\alpha_0(d-1/2)}  = o(n^{d+1/2})$.
 \par
  (ii) Further under Assumption \ref{Ass-U}, there exists a sequence of Gaussian processes $$\mf{\tilde R}_{i,n} = \sum_{j = 1}^{\infty} \boldsymbol \mu_W(t_i) \psi_{j}(d_n)\sigma_H(t_{i-j}) V_{i - j ,1} + \bs \Sigma^{1/2}(t_i) \mf V_i,$$ where $\mf V_i$, $1 \leq i \leq n$, are $i.i.d.$ $N(\mf 0, \mf I_p)$, s.t.
  \begin{align}
    \max_{\nb + 1 \leq r \leq n - \nb} \left|\sum_{i = \nb +1}^r (\mf x_{i,n} e_{i,n}^{(d_n)} - \mf{\tilde R}_{i,n}) \right|  = \op(n^{1/2}).
  \end{align}
\end{theorem}

Since $\|{\mf R}_{i,n}\| \asymp n^{d+1/2}$ and $\|\tilde {\mf R}_{i,n}\| \asymp n^{1/2}$, the approximation errors of Theorem \ref{thm:fixlocal} (i) and (ii) are asymptotically negligible. It can be also verified that the process $(\mf R_{k,n})_{k=1}^n$ is a locally stationary LRD Gaussian process defined by Definition \ref{def:SRDLRD}.

\begin{remark}
 The results of (i) consist of two parts. The rate $\sqrt{n} (\log n)^d$ is due to  the approximation of $\max_{1\leq s
    \leq n}|\sum_{k=1}^s(\bs x_{k,n} - \bs\mu_W(t_k))e_{k,n}^{(d)}|$, see \cref{prop:5.2}. The rate $n^{1+\alpha_0(d-1/2)}$ is due to  the approximation of $\max_{1\leq s
    \leq n}|\sum_{k=1}^s\bs \mu_W(t_k)e_{k,n}^{(d)}-\mf R_{k,n}|$, which  extends Theorem 2 in \cite{wu2018}, see \cref{lm:alt_gaussian}. Specifically, we allow the driving shocks $(u_{i,n})_{i=-\infty}^n$ to be both dependent and heteroscedastic, while \cite{wu2018} assumed $(u_{i,n})_{i=-\infty}^n$ to be  independent. 
    
\end{remark}
   
It is worth pointing out that (ii) is not an direct consequence of (i). Letting $d=d_n$ in (i), the rate $\sqrt n(\log n)^d$ in (i) will eventually lead to a trivial bound, i.e.,  $\Op(\sqrt{n})$, which is of the same order as the partial sum $\sum_{i = \nb +1}^r \mf x_{i,n} e^{(d_n)}_{i,n}$, $\nb +1\leq r\leq n-\nb $.


Based on the Gaussian approximation result, we proceed to study the limiting distributions of KPSS and related statistics under the fixed and local alternatives for the time-varying coefficient model \eqref{model1} with time series covariates. For the sake of brevity, we focus on the KPSS-type statistics. The results for R/S, V/S, and K/S-type statistics can be derived similarly and are summarized in Remark \ref{rm:RS_limit} and \cref*{sub:limits} in the online supplement.
\subsubsection{Fixed alternatives}\label{fixalternative}
In the following theorem, we establish the asymptotic distribution of the KPSS-type statistic $T_n$ \eqref{eq:KPSS} under the fixed alternatives.
     \begin{theorem}\label{thm:alt_approx}
       Under Assumptions \ref{assumptionHp}, \ref{A:K},  \ref{A:beta} and  \ref{Ass-W},  assuming $nb_n^4/(\log n)^2 \to \infty, n b_n^6 \to 0$, and 
      $\lambda(|\bs \mu_W^{(-1)}(\cdot))| \neq  0) > 0$,
     then we have under $H_A$ with long-memory parameter $d$,
    \begin{align}
      T_{n}\Gamma^2(d+1) /n^{2d } \Rightarrow  \int_0^1 U^2_d(t) dt,
    \end{align}
    where $U_d(t)$ is a zero mean continuous Gaussian process with covariance function
      \begin{align}
       \E(U_d(r)U_d(s)) :=  \gamma_d(r,s) = \int_{-\infty}^{r \wedge s}\sigma^2_H(v)\lambda_d(r,v)\lambda_d(s,v) dv , \quad r, s \in [0,1],
      \end{align}
      where for $v \leq u \in [0,1]$,
      $
        \lambda_d(u,v) =  d\int_{(-v)_+}^{(u-v)_+}t^{d-1}(\check M_W(t+v)-1) dt
       $, 
      $\check M_W(t) = \boldsymbol \mu_W^{\T}(t) \mf{M}^{-1}(t)\boldsymbol \mu_W(t)$, $t \in [0,1]$. 
     \end{theorem}
     Theorem \ref{thm:alt_approx} proves that the test statistic diverges to infinity at the rate of $n^{2d}$.
       Furthermore, from Theorem \ref{thm:alt_approx}, we observe that the limiting distribution of $T_n$ under the fixed alternatives is independent of ${\bs \Sigma}(t)$ except $\sigma^2_H(t)$, which is its $(1,1)$ component, while  under the null hypothesis the limiting distribution relies on all the components of ${\bs \Sigma}(t)$, see  Theorem \ref{thm:null_dist}. This is because when $d>0$, the stochastic fluctuation of the SRD components $(\mathbf x_{i,n})$ is asymptotic negligible compared to that  of $(e^{(d)}_{i,n})$.
     
      Straightforward calculation shows that  $\check M_W(t)\geq 1$. The condition $\lambda(|\bs \mu_W^{(-1)}(\cdot))| \neq  0) > 0$ ensures
     $\check M_W(t)>1$ in an interval with positive length such that $\lambda_d(u,v)>0$ and excludes the scenario that all  the stochastic covariates  are zero mean during the whole period (see  \cref{Remark-degenerate} for detailed discussion of such scenario) as well as the time-varying trend model,i.e., \eqref{eq:long_memory1} with $p=1$. When $\lambda_d(u,v)=0$, we can show that $T_n=\op(n^{2d})$, i.e., the test statistic is degenerate under $H_A$. 

\begin{remark}
  The error model \eqref{Locallyerror} is in fact a Type I fractional $I(d)$ process when $d>0$. If a locally stationary Type II fractional $I(d)$ error process is considered, i.e., $(1- \B)^d e_{i,n} = u_{i,n}\mf 1(i \geq 1)$, the limiting distribution of $T_n$ is almost the same except that the lower bound of the integral in $\gamma_d(r,s)$ is $0$ instead of $-\infty$. We refer to \cite{marinucci1999alternative} for the definition of Type I and Type II fractional $I(d)$ processes. 
\end{remark}

\subsubsection{Local alternatives}
The following theorem presents the asymptotic distribution of the KPSS-type statistic $T_n$ \eqref{eq:KPSS} under the local alternatives.
\begin{theorem}\label{thm:local}
  Let the conditions of  \cref{thm:alt_approx} and Assumption  \ref{Ass-U} hold.
 Then under $H_A$ with $d_n=c/\log n$ for a constant $c>0$, we have 
   \begin{align}
    T_n \Rightarrow \int_0^1 (U^{\circ}(t))^2dt,
   \end{align}
  where $U^{\circ}(t)$ is a zero mean  continuous Gaussian process with covariance function 
  \begin{align}
   \E(U^{\circ}(r)U^{\circ}(s))=: \gamma^{\circ} (r,s) &=  \check \gamma(r, s) + \gamma(r, s) +  2\tilde \gamma(r, s) , \quad r, s \in [0,1],
  \end{align}
  where $\gamma(r, s)$ is defined in \cref{thm:null_dist}, and
  \begin{align}
  &\tilde \gamma(r, s) = (e^c - 1)\int_0^{r\wedge s} \sigma_H(t)(\{\boldsymbol \mu_W^{\T}(t) \mf M^{-1}(t) \bs \Sigma^{1/2}(t)\}_1 - \sigma_H(t))(\check M_W(t) - 1) dt, \\
   &\check \gamma(r, s) = (e^c -1)^2\int_0^{r\wedge s}  \sigma^2_H(t) (\check M_W(t)-1)^2 dt, 
         \end{align}
         where $\{\cdot\}_1$ and $\check M_W(t)$ are as defined in \cref{thm:null_dist} and \cref{thm:alt_approx}, respectively.
\end{theorem}

From \cref{thm:local}, we shall see that under the local alternatives $d_n=c/\log n$, the KPSS-type statistic converges to a  distribution depending on the mean and covariance matrix of $(\mf x_{i,n})$, the long-run covariance matrix of $(\mf x_{i,n}e_{i,n})$, 
as well as the parameter $c$. Careful examination of the proof of Theorem \ref{thm:local} shows that $T_n$ will converge to the limit in Theorem \ref{thm:null_dist} if $d_n\log n=o(1)$, indicating that the exact local power of the KPSS-type test is $O(\log^{-1} n)$ for model \eqref{model1}. For the  time-varying trend model \eqref{eq:long_memory1}, it can be shown that the test statistic is degenerate (since $\lambda(|\bs \mu_W^{(-1)}(\cdot))| \neq  0) =0$) under local alternatives $d_n = c/\log n$, i.e., $T_n=\op(1)$. 
  The results for  R/S, K/S and V/S-type tests follow similarly. 
  \begin{remark}\label{rm:RS_limit}
The limiting  behavior of  R/S, V/S and K/S-type statistics defined in \cref{sec:4} under \cref{nondeg:null} can be derived by Theorems \ref{thm:null_dist}, \ref{thm:alt_approx} and \ref{thm:local} as well as an application of continuous mapping theorem.  Their limiting distributions are functions of $U(t)$, $U_d(t)$, $U^{\circ}(t)$ defined therein,
 see \cref*{sub:limits} in the online supplement for the exact forms. 
\end{remark}
\section{The bootstrap-assisted procedure} \label{sec:diff}\label{sec:bootstrap}
\cref{Test} shows that under the null hypothesis, the limiting distributions of KPSS and related test statistics are functions of the Gaussian process $U(t)$ which involves parameters $\boldsymbol \mu_W(t), \mf M(t),  \bs \Sigma(t)$ (or $\sigma^2_H(t)$ when $p=1$). Furthermore, the magnitude and specific form of the limiting distributions depend on whether \cref{nondeg:null} holds, which is usually unknown in practice. Therefore, it's impossible to obtain the  critical values by directly simulating the Gaussian process $U(t)$. In this section, we provide a consistent bootstrap approach \cref{algorithm} which mimics the asymptotic behavior of $T_n$ under the null hypothesis no matter whether \cref{nondeg:null} is satisfied and yields valid simulated critical values. In addition, the estimation of $\bs \mu_W(\cdot)$ is not required in our proposed bootstrap tests. More precisely, we employ $\mf x_{i,n}^\top \hat{\mf M}^{-1}(t)$ instead of $\hat{\bs \mu}^\top_W(t) \hat{\mf M}^{-1}(t)$, where $\hat{\bs \mu}_W(t)$ stands for any consistent estimator of $\bs \mu_W(t)$, and the validity of the former construction can be easily verified by noting that  in \eqref{eq:Gk} of \cref{algorithm}, the Gaussian multiplier is independent of $ \mf x_{i,n}$, $\hat {\mf M}(\cdot)$ and $\hat {\bs \Sigma}(\cdot)$ where the consistent estimators $\hat{\mf M}(t)$ and $\hat {\bs \Sigma}(t)$ will be discussed later. Thus, the difference between \eqref{eq:Gk} and the counterpart with $\bs \mu_W(t)$ can be controlled by the convolution of standard Gaussian multipliers and the partial sum of the zero mean SRD process containing $(\mf x_{i,n}-\bs \mu_W(t_i))$. We only discuss \cref{algorithm} for the KPSS-type test when $p\geq 2$ in detail in this section.
The other algorithms, including \cref*{trend_algorithms} in the online supplement for $p=1$ case corresponding to the time-varying trend model \eqref{eq:long_memory1},  and \cref*{algorithms}  for R/S, V/S, K/S-type tests are moved to the supplement. 

 
 \begin{algorithm}[!ht]
      \caption{The KPSS-type test for time-varying coefficient models}
      1. Select the smoothing parameters $m$, $b_n$, and $\tau_n$, according to  \cref*{sel}.\\
      2. Calculate $\tilde{e}_{i,n} = y_{i,n}  -\mathbf{x}_{i,n}^{\T}\tilde{{\bs \beta}}(t_i), i = 1, 2, \cdots, n$, 
        using local linear regression \eqref{eq:loclin} and jackknife correction \eqref{eq:jack}. Compute the KPSS-type statistic $T_n$ in \eqref{eq:KPSS}. \\
       3. Calculate $\hat{\mf{M}}(t)$ and $\hat{{\bs \Sigma}}(t)$, the estimators of $\mf M(t)$ and $\bs\Sigma(t)$ defined in \eqref{estiM} and \eqref{eq:diff_correct}, respectively.\\ 
       4. Generate B (say 2000) $i.i.d.$ copies of $N(\mf 0, \mf I_p)$ vectors $\mf V^{(r)}_i=(V^{(r)}_{i,1},...,V^{(r)}_{i,p})^\top$, $1\leq r\leq B$, then calculate (notice that $\hat \sigma^2_{H}(t)=(\hat {\bs \Sigma}(t))_{1,1} $)
        \begin{align}
          \tilde G^{(r)}_k=-\sum_{j=1}^n\left(\frac{1}{nb_n}\sum_{ i=\lf nb_n \rf+1 }^k \mf x_{i,n}^{\T} \hat{\mf{M}}^{-1}(t_i)  K_{b_n}^*(t_i - t_j)\right) \hat {\bs \Sigma}^{1/2}(t_j)\mf{V}^{(r)}_j + \sum_{ i=\lf nb_n \rf+1}^k \hat \sigma_{H}(t_i)V^{(r)}_{i,1},\label{eq:Gk}
        \end{align}
         and the bootstrap version of the KPSS-type statistic \eqref{eq:KPSS}
          \begin{align}\label{eq:bootstrap}
          \tilde T^{(r)}_{n} = \frac{1}{n(n - 2\lf nb_n\rf)}\sum_{s=\lf nb_n \rf + 1}^{ n - \lf nb_n\rf} \left(\sum_{k = \lf nb_n \rf + 1}^s \tilde G^{(r)}_k\right)^2.
        \end{align}
         5. Let $\tilde{T}_{n,(1)} \leq \tilde{T}_{n,(2)} \leq \cdots  \leq \tilde{T}_{n,(B)}$ be the ordered statistics of $\{\tilde{T}_{n}^{(r)}\}_{r=1}^B$.
         Reject $H_0$ at level $\alpha$ if $T_n >\tilde{T}_{n,(\lfloor B(1-\alpha)\rfloor)}$. Let $B^* = \max\{r: \tilde{T}_{n,(r)} \leq T_n\}$. Then the $p$-value of the KPSS-type test is $1-B^*/B$.
    \label{algorithm}
\end{algorithm}

To implement \Cref{algorithm}, we need to obtain the estimators $\hat {\mf M}(t)$ and $\hat {\bs \Sigma}(t)$. For $\hat {\mf M}(t)$ we propose the following estimator that uses directly observed covariates $\mf x_{i,n}$:
\begin{equation}\label{estiM}
    \hat{\mf M}(t) = \frac{1}{n \eta_n}\sum_{i=1}^n   \mathbf{x}_i \mathbf{x}_i^{\T} K_{\eta_n}(t_i - t^*) ,
\end{equation}
where $t^* = \max\{\eta_n,\min(t,1-\eta_n)\}$ for some bandwidth  $\eta_n \to 0$, $n\eta_n^2 \to \infty$.  Under Assumptions \ref{A:K} and \ref{Ass-W}, after a careful investigation of Lemma 6 of \cite{zhou2010simultaneous}, we have $\sup_{t \in[\eta_n,1-\eta_n]}|\hat{\mf{M}}(t)- \mf M(t)|=\op(1)$, i.e. $\hat{\mf{M}}(t)$ is uniformly consistent, see \cref*{lm:scblemma6} in the online supplement for details. \par 
 
 
 Observing that $\bs \Sigma(\cdot)$ depends on the unobserved error process $(e_{i,n})$.  Therefore, a common approach to estimate $\bs \Sigma(\cdot)$ is to utilize $\hat e_{i,n}'s$ from the local linear fit \eqref{eq:loclin}, 
 see for instance \cite{zhou2010simultaneous} and   \cite{vogt2015detecting}. Such plug-in estimate will yield test results that are sensitive to the choices of $b_n$ under $H_0$, and even more sensitive under $H_A$. We also find through our extensive numerical studies (which are not reported in this paper due to limited space) that the power of the test using the plug-in estimator of long-run covariance is often unsatisfactory. Therefore, we adopt a difference-based estimator that does not involves $\hat e_{i,n}'s$.

%


 
  For $p=1$ we recommend the difference statistic proposed in (4.7) of  Section 4.2 of \cite{dette2019detecting} for $\sigma^2_H(t)$ which is built on the difference of $y_{i,n}$. For $p \geq 2$, it can be shown that a direct extension of the estimator based on the difference of $(\mf x_{i,n}y_{i,n}$) is  asymptotically biased. Therefore, we adopt the following bias-corrected difference-based estimator proposed by  \cite{lrvunpublish}. Let $\mf Q_{k, m}=\sum_{i=k}^{k+m-1} \mf x_{i,n}y_{i,n}$, for $t \in [m/n,1-m/n]$, 
\begin{align}
    \bs \Delta_{j}=\frac{\mf Q_{j-m+1, m}- \mf Q_{j+1, m}}{m},\quad \acute{{\bs \Sigma}}(t)=\sum_{j=m}^{n-m} \frac{m \bs \Delta_{j}\bs \Delta_{j}^{\T}}{2}\omega(t, j),\label{eq:diff_based}
\end{align}
where $
      \omega(t, i)= K_{\tau_n}\left(t_i-t\right) / \sum_{i=1}^{n} K_{\tau_{n}} \left(t_i-t\right)
$ for some bandwidth $\tau_n$ and the kernel function $K(\cdot)$ with support $(-1,1)$.
For $t \in [0,m/n)$, set $\acute{{\bs \Sigma}}(t) = \acute{{\bs \Sigma}}(m/n)$. For $t \in (1-m/n,1]$, set $\acute{{\bs \Sigma}}(t) = \acute{{\bs \Sigma}}(1-m/n)$.  The bias-corrected difference-based estimator $\hat {\bs \Sigma}(t)$ for $t \in [0,1]$ is then defined as:
     \begin{align}
         \hat {\bs \Sigma}(t) = \acute {\bs \Sigma}(t) - \breve {\bs \Sigma}(t), ~~~ \text{where}~~\breve {\bs \Sigma}(t)= \sum_{j=m}^{n-m} \frac{m\hat{\mf A}_{j} \hat{\mf A}_{j}^{\T}}{2}\omega(t, j),\label{eq:diff_correct}
     \end{align}
where $
    \hat{\mf A}_{j} = \frac{1}{m}\sum_{i= j-m+1}^{j} (\mf x_{i,n} \mf x_{i,n}^{\T}\breve {\bs \beta}(t_i)-\mf x_{i+m, n} \mf x_{i+m, n}^{\T}\breve {\bs \beta}(t_{i+m}))$, 
    $
        \breve {\bs \beta}(t) = \bs \Omega^{-1}(t)\bs \varpi (t),
$
    where $\bs\Omega(t),\bs \varpi (t)$ are the smoothed versions of $\acute{\bs \Delta}_{j} := \frac{1}{m}\sum_{i=j-m+1}^{j} \tilde{\mf X}_{i,m}\tilde{\mf X}_{i,m}^{\T}$ and $\breve {\bs \Delta}_{j} := \frac{1}{m}\sum_{i=j-m+1}^{j} \tilde{\mf X}_{i,m}^{\T}\tilde{\mf Y}_{i,m}$, i.e., 
    $
         \bs\Omega(t) = \sum_{j=m}^{n-m} \acute {\bs \Delta}_{j}\omega(t^*, j)/2,\nonumber$ and $\bs \varpi (t) = \sum_{j=m}^{n-m} \breve {\bs \Delta}_{j} \omega(t^*, j)/2.
     $
To make \cref{algorithm} operational, it's necessary to select smoothing parameters  $\eta_n$, $\tau_n$ and $m$ for $\hat {\mf M}(t)$ and  $\hat {\bs \Sigma}(t)$.  The selection of smoothing parameters  is postponed to \cref*{sel} of the online supplement. 
\subsection{The limiting behavior of the bootstrap tests}\label{bootstrap_limit}

In this section,  we shall show the  asymptotic correctness of the bootstrap test \cref{algorithm}. 
We shall also prove that
the power of \cref{algorithm}  against $d > 0$
can be no less than $(n/m)^{2d}$,
and that the exact local power of \cref{algorithm} can achieve the order $O(\log^{-1} n)$. \par
 We start by defining the long-run {\it cross} covariance vector between the locally stationary processes $ (\mf U(t,\FF_i))$ and $(H(t,\FF_j))$. 

\begin{definition}\label{def:SUH}
 Define the long-run cross covariance vector $\mf s_{UH}(t) \in \mathbb R^p$ by
  \begin{align}
  \mf s_{UH}(t) = \sum_{j=-\infty}^{\infty} \mathrm{Cov}(\mf U(t, \FF_0), H(t, \FF_j)), \quad t \in [0, 1].\nonumber
  \end{align}
\end{definition}
When $p=1$, $\mf s_{UH}(t)$ degenerates into $\sigma_H^2(t)$.
 We assume the following conditions for $\hat{\bs \Sigma}(t)$ used in \cref{algorithm}. Write $\hat{\bs \Sigma}(t)$ defined in \eqref{eq:diff_correct} as $\hat{\bs \Sigma}_d(t)$ under the fixed alternatives and $\hat{\bs \Sigma}_{d_n}(t)$  under the local alternatives $d_n$. Let $\mathcal{I} = [\gamma_n, 1-\gamma_n] \subset (0,1)$, where $\gamma_{n}=\tau_{n}+(m+1) / n$. 
\begin{assumption}\label{ass:lrv} 
The long-run variance estimator satisfies the following conditions\par
 (i) Under the null hypothesis,
    \begin{equation}
      \sup _{t \in \mathcal{I}}\lt|\hat{{\bs \Sigma}}(t)-{\bs \Sigma}(t)\rt|= \op(b_n/\log^2 n).\nonumber
      \end{equation} \par
       (ii) Under the fixed alternatives,
    \begin{equation}
      \sup_{t \in \I}\left| m^{-2d}\hat{{\bs \Sigma}}_d(t) - {\bs \Sigma}_d(t)\right|  = \op(1),\nonumber
      \end{equation}
      where ${\bs \Sigma}_d(t) = \kappa_2(d)\sigma_H^2(t) \boldsymbol \mu_W(t)\bs \mu^{\T}_W(t)$, $t \in [0,1]$, and $\kappa_2(d) = \Gamma^{-2}(d+1)\int_{0}^{\infty}(t^d - (t-1)_+^d)(2t^d - (t-1)_+^d - (t+1)^d) dt$.\par
       (iii) Under the local alternatives $d_n = c/\log n$, $m = \lf n^{\alpha_1} \rf$, $\alpha_1 \in (0,1)$,
       \begin{align}   
   \sup_{t \in \I}\left|\hat{{\bs \Sigma}}_{d_n}(t) - \check {\bs \Sigma}(t)\right|   = \op(1),\nonumber
 \end{align}
  where $\check {\bs \Sigma}(t) :=  {\bs \Sigma}(t) + (e^{c\alpha_1}-1)^2\sigma_H^2(t) \boldsymbol \mu_W(t)\bs {\mu}^{\T}_W(t) + (e^{c\alpha_1}-1)(\mf s_{UH}(t) \boldsymbol \mu_W^{\T}(t) +\boldsymbol \mu_W(t)\mf s^{\T}_{UH}(t))$.
\end{assumption}
It can be shown that the both the plug-in estimator of \cite{zhou2010simultaneous} and the bias-corrected estimator \eqref{eq:diff_correct} satisfies this condition under suitable bandwidth conditions following Theorem 3.1 of \cite{wu2006invariance} and the chaining argument of Propostion B.1 of  \cite{dette2018change}.  \par Let  $\tilde T_n$  denote the bootstrap statistic \eqref{eq:bootstrap}  generated in one iteration.  Recall the definitions of $U(t)$, $s_1$, $s_2$  in \cref{thm:null_dist}. \cref{thm:bootstrap_null} gives the limiting distributions of bootstrap statistic $\tilde T_n$.

\begin{theorem}[Bootstrap under null]\label{thm:bootstrap_null}
Assume the conditions \ref{A:K}, \ref{A:beta}, \ref{Ass-U}, \ref{Ass-W} and \ref{ass:lrv} hold,  
$nb_n^{7/2}/(\log n)^4\to \infty$, $nb_n^6 \to 0$, $\eta_n \to 0$, $n\eta_n^2 \to \infty$. Then, under the null hypothesis, we have \par
(i) if \cref{nondeg:null} holds, then
$ \tilde T_{n} \Rightarrow \int_0^1 U^2(t) dt$
.\par
(ii) if \cref{nondeg:null} doesn't hold, then 
$ s_1^{-1}(\tilde T_n/b_n-s_2 )\Rightarrow \chi^2_1$.

\end{theorem}

    
    Combining with \cref{thm:null_dist}, \cref{thm:bootstrap_null} indicates that the bootstrap test \cref{algorithm} is asymptotically of level $\alpha$ {\it no matter whether \cref{nondeg:null} is satisfied}. We proceed to investigate the behavior of the bootstrap statistic $\tilde T_n$ under fixed and local alternatives.
Let $\tilde U_d(t)$, $\check U(t)$ be zero mean continuous Gaussian processes, with the covariance structures defined in the same way as that of $U(t)$ in \eqref{Tnlimit}, where $\bs \Sigma(t)$ is replaced by $\bs \Sigma_d(t)$  and $\breve{\bs \Sigma}(t)$ in \cref{ass:lrv}, respectively. Let $\sigma_{Hd}^2(t) := (\bs \Sigma_d(t))_{(1,1)}$, $\check \sigma_{H}^2(t) := (\check{\bs \Sigma}(t))_{(1,1)}.$
  \begin{theorem}[Bootstrap under  alternatives]\label{thm:bootstrapA}
    Under the conditions of \cref{thm:bootstrap_null},\par
(i)   Suppose $\lambda({\bs \Sigma}^{1/2}_d(t)\mf{M}^{-1}(t) \boldsymbol \mu_W(t) \neq (\sigma_{Hd}(t), 0, \cdots, 0)) > 0$ under the fixed alternatives $d>0$. Then, we have
    $$m^{-2d} \tilde T_{n} \Rightarrow \int_0^1 \tilde U^2_d(t) dt,$$ where  
    $\tilde U_d(t)$ is a zero-mean continuous Gaussian process with covariance function 
\begin{align}
     \E(\tilde U_d(r)\tilde U_d(s))
     &= \int_0^{r \wedge s }\boldsymbol \mu_W^{\T}(t) \mf{M}^{-1}(t) {\bs \Sigma}_d(t)\mf{M}^{-1}(t) \boldsymbol \mu_W(t) dt \nonumber
     \\ & - 2 \int_{0}^{r \wedge s } \{\boldsymbol \mu_W^{\T}(t) \mf M^{-1}(t)  {\bs \Sigma}_d^{1/2}(t)\}_1  \sigma_{Hd}(t) dt
     + \int_{0}^{r \wedge s} \sigma_{Hd}^2(t) dt, \quad r,s \in [0,1]. \nonumber
\end{align}

    \par
    (ii) Suppose $\lambda(\check{\bs \Sigma}^{1/2}(t)\mf{M}^{-1}(t) \boldsymbol \mu_W(t) \neq (\check \sigma_{H}(t), 0, \cdots, 0)) > 0$. For the local alternatives $d_n = c /\log n$ with some positive constant $c$, we have
    $$ \tilde T_{n} \Rightarrow \int_0^1 \check U^2(t) dt,$$
     where $\check U(t)$ is a zero-mean continuous Gaussian process with covariance function
     \begin{align}
     \E(\check U(r)\check U(s)) &= \int_0^{r \wedge s } \boldsymbol \mu_W^{\T}(t) \mf{M}^{-1}(t) \check {\bs \Sigma}(t)\mf{M}^{-1}(t) \boldsymbol \mu_W(t) dt \\ &- 2 \int_{0}^{r \wedge s } \{\boldsymbol \mu_W^{\T}(t) \mf M^{-1}(t)  \check {\bs \Sigma}^{1/2}(t)\}_1  \check \sigma_{H}(t) dt 
           + \int_{0}^{r \wedge s} \check \sigma_{H}^2(t) dt, \quad r,s \in [0,1].
     \end{align}


    \end{theorem}
  
\cref{thm:bootstrapA} (i) gives the limiting distribution of $\tilde T_{n}/ m^{2d}$, which means the critical values generated by \cref{algorithm} for a level $\alpha$ test diverges at the rate of $m^{2d}$.  Thus, the bootstrap-assisted test is consistent since \cref{thm:alt_approx} demonstrates that the KPSS-type test statistic $T_n$ diverges at the rate $n^{2d}$ which is much faster than  $m^{2d}$.  Further together with \cref{thm:null_dist}, \cref{thm:bootstrapA} (i) shows that the bootstrap test \cref{algorithm} is  asymptotically correct. Notice that the condition $\lambda({\bs \Sigma}_d(t)\mf{M}^{-1}(t) \boldsymbol \mu_W(t) \neq (\sigma_{Hd}(t), 0, \cdots, 0)) > 0$ prevents the degeneracy of bootstrap statistics. If it is violated, $\tilde T_n= o(m^{2d})$ which will yield higher power than when the condition is fulfilled. 

On the other hand, \cref{thm:bootstrapA} (ii) and \cref{thm:local} indicate that the bootstrap test \cref{algorithm} is able to detect the local alternatives at the rate of $\log^{-1} n$. Observe that under \cref{ass:lrv} (iii) as $c\rightarrow  0$, $|\check{\bs \Sigma}(t)-\bs \Sigma(t)|\rightarrow 0$ and the covariance structure of $\check{U}(t)$ will converge to the covariance structure of $U(t)$. Therefore, the bootstrap test \cref{algorithm} has no power when $d_n=o( \log^{-1} n)$, indicating that the proposed test has the exact local power of $O(\log^{-1} n)$ under the condition of \cref{thm:bootstrapA}. For stationary time series with unknown constant mean, \cite{shao2007local} has also proved that the KPSS test for long memory has the exact local power $O(\log^{-1} n)$.
 
An  important scenario that $T_n$ degenerates under both null and alternatives is the  time-varying trend model \eqref{eq:long_memory1}. For this case we provide the bootstrap-assisted KPSS test in \cref*{trend_algorithms} of the online supplement, which is the $p=1$ version of \cref{algorithm} with $\hat {\mf M}(t) = 1$, and $\hat{\bs \Sigma}(t) = \hat \sigma_H^2(t)$ using the difference statistic proposed by \cite{dette2019detecting}.
\cref{thm:bootstrap_null} (ii) ensures that the level $\alpha$ critical value generated by \cref*{trend_algorithms} converges to the $\alpha_{th}$ quantile of the test statistic $T_n$ under the null hypothesis. Under the alternative hypothesis, the following proposition investigates the power of the test implemented via \cref*{trend_algorithms}.
     \begin{proposition}\label{deg:alt} Let $\tilde T_n$ denote the KPSS-type bootstrap statistic generated from \cref*{trend_algorithms}. Under Assumptions \ref{assumptionHp}, \ref{A:K}, \ref{A:beta}, \ref{ass:lrv}, and the bandwidth conditions 
     $nb_n^4/(\log n)^2 \to \infty,  b_n \to 0$, $nb_n/m \to \infty$, we have the following results: \par
      (i) Under the fixed alternatives $d > 0$,
     $
        \lim_{n \to \infty} P\left( T_n >  \tilde T_n\right) = 1.
  $\par
      (ii) Suppose $m = \lf n^{\alpha_1} \rf$, $nb_n = n^{\beta}$ for some $\alpha_1, \beta \in (0,1)$. Then under local alternatives with 
      $d = d_n = c/\log n$ for a sufficiently large constant $c$,   
      $
        \lim_{n \to \infty} P\left( T_n >  \tilde T_n\right) = 1.
      $
    \end{proposition}
 In addition to the KPSS test, \cref{trend_algorithms} also provides the bootstrap-assisted V/S, R/S and K/S tests for the time-varying trend model \eqref{eq:long_memory1}. Similar conclusions as given in \cref{deg:alt} hold for the power of these tests.

  \begin{remark}\label{Remark-degenerate}
  For $p\geq 2$, $T_n$ degenerates under the fixed alternatives if and only if all the stochastic covariates are of mean zero (i.e., $\mu_W^{(-1)}(t) = 0$), see 
  \cref{fixalternative}. Under this condition, we can show that $T_n/b_n$ diverges at the rate of $(nb_n)^{2d}$. Combining with \cref{thm:bootstrapA} (i), our bootstrap tests \cref{algorithm} and \cref{algorithms}   are consistent if $b_n (nb_n/m)^{2d}\to \infty$. If $\sum_{i \in \mathbb Z} \mathrm{Cov} (H(t, \FF_0), H(t, \FF_i)W^{(-1)}(t, \FF_i)) = 0$, $T_n$ degenerates under $H_0$ and $H_A$ if and only if $\mu_W^{(-1)}(t) = 0$. In this case, one could implement \cref{algorithm} and \cref{algorithms}  for R/S, V/S, K/S-type tests  via modifying the difference-based long-run covariance estimator:
  set $\hat{\bs \Sigma}(t)_{(1,l)} = 0$ and $\hat{\bs \Sigma}(t)_{(l,1)} = 0$  for $l=2, \cdots, p$. Then, by a further investigation of the proof to \cref{thm:bootstrapA},  $\tilde T_n/b_n$ is $\Op(m^{2d})$ and thus the tests are consistent when $m/(nb_n) \to 0$.
  In addition, similar to \eqref{deg:alt}, we can also show that the local power of the tests with the modified estimator is of order $O(\log^{-1} n)$.
      \end{remark}



\section{Finite sample performance}\label{finite}


In the following simulation studies and data analysis, we examine the size and power performance of the bootstrap-assisted KPSS and related tests
\cref{algorithm}, \cref*{trend_algorithms} and \cref*{algorithms} in the online supplement. The number of bootstrap samples is $B = 2000$ and the number of replications is $1000$. The parameters $\mf M(t)$, ${\bs \Sigma}(t)$, $\sigma^2_H(t)$ are estimated by $\hat{\mf M}(t)$, $\hat {\bs \Sigma}(t)$, $\hat \sigma^2_H(t)$ in \cref{sec:diff}, 
with all the smoothing parameters selected by the methods advocated in \cref*{sel} in the online supplement. 
Let
\begin{align}
    \FF_j =( \cdots, \zeta_{j-1}, \zeta_{j}), \quad \mathcal{G}_j = (\cdots, \varepsilon_{j-1}, \varepsilon_{j}),\quad  j=-\infty, \cdots, n, \quad \nonumber
\end{align}
where $(\varepsilon_{l})_{l\in \mathbb Z}, (\zeta_{l})_{l\in \mathbb Z}$ are $i.i.d.$ $ N(0,1)$.
 We consider the following time-varying coefficient model, 
$$
y_{i, n}=\beta_{1}(t_i)+\beta_{2}(t_i) x_{i, n}+e_{i,n}, \quad i=1, \ldots, n,
$$
where $\beta_{1}(t)=4 \sin (\pi t) $, $\beta_{2}(t)=4 \exp \{-2 \left(t-0.5\right)^{2}\} $, $x_{i, n}=W(t_i, \mathcal{F}_{i})$, and $ u_{j,n} = H(t_j,\mathcal{F}_j, \mathcal{G}_j)$.
First, we consider the following independent model:
\begin{enumerate}[label=(\roman*),itemsep=2pt,topsep=0pt,parsep=0pt] 
  \item  
  Let $W\left(t, \mathcal{F}_{i}\right)= (0.25 + 0.25\cos(2\pi t))W(t, \mathcal{F}_{i-1})+ 0.25\zeta_{i} + (t-0.5)^2$
  , $ H(t, \mathcal{G}_{i})=(0.35 - 0.4(t-0.5)^2) H(t, \mathcal{G}_{i-1})+ 0.8 \varepsilon_{i}$. \label{M0}
\end{enumerate}
Second, we consider the following heteroscedastic model:$$
    H(t,\mathcal{F}_{i},\mathcal{G}_{i}) =  B\left(t,\mathcal{G}_{i}\right)\sqrt{1+W^2(t, \mathcal{F}_{i})},$$ where $
     W\left(t, \mathcal{F}_{i}\right)= (0.1 + 0.1\cos(2\pi t))W(t, \mathcal{F}_{i-1})+ 0.2\zeta_{i} + 0.7(t-0.5)^2,
$ and
 $B\left(t,\mathcal{G}_{i}\right)$ is as considered in the following linear and nonlinear scenarios.
\begin{enumerate}[label=(ii.\arabic*),itemsep=2pt,topsep=0pt,parsep=0pt] 
    \item 
    Linear errors: \label{M1}$
         B(t, \mathcal{G}_{i})=(0.3 - 0.4(t-0.5)^2) B(t, \mathcal{G}_{i-1})+ 0.8\varepsilon_{i}.
    $
    \item \label{M2} 
    Nonlinear errors:
    $
        B(t, \mathcal{G}_{i})=(0.15 - 0.4(t-0.5)^2) B(t, \mathcal{G}_{i-1})+ 0.8G(t, \mathcal{G}_{i})$,$ G(t, \mathcal{G}_{i}) = \varepsilon_i \sigma_i(t),
    $
    where $\sigma^2_i(t) = 0.9+0.1\cos(\pi/3 + 2\pi t) + (0.1+0.2t)G^2(t, \mathcal{G}_{i-1})+ (0.1 + 0.2t)\sigma^2_{i-1}(t).$ 
\end{enumerate}
 Observe that models \ref{M1} and \ref{M2} are heteroscedastic models with locally stationary AR(1) and locally stationary GARCH(1,1) errors, respectively. \cref{tb:bn} summarizes the performance of our proposed bootstrap-assisted KPSS, R/S, V/S and K/S-type tests for long memory in models \ref{M1} and \ref{M2} with different $b_n's$. We relegate the simulated sizes of model \ref{M0} with different $b_n's$ to \cref*{tb:m0size1} of the online supplement.
The empirical sizes of all the four tests are close to their nominal levels and are quite stable when $b_n$ changes within a reasonably wide range. Also, \cref*{tb:Tn} in the online supplement reports the simulated Type I error of the proposed tests with respect to increasing sample sizes. As shown in \cref*{tb:Tn}, our  procedures for smoothing parameter selection including GCV and MV selection 
 as well as the difference-based long-run variance estimator work very well in the sense that the simulated sizes of all four tests are quite close to their nominal levels in different sample sizes. 
\begin{table}[!ht]
\centering
 \setlength{\tabcolsep}{3pt}  
\small
\begin{tabular}{ccccccccc|cccccccccc}
  \hline
&   \ref{M1}& & & & & &  &  &  \ref{M2}& & & & & &  & \\ 
\hline
      & \multicolumn{2}{c}{KPSS} & \multicolumn{2}{c}{R/S} & \multicolumn{2}{c}{V/S} & \multicolumn{2}{c|}{K/S}&    \multicolumn{2}{c}{KPSS} & \multicolumn{2}{c}{R/S} & \multicolumn{2}{c}{V/S} & \multicolumn{2}{c}{K/S} \\
\hline
$b_n$ & 5\%         & 10\%       & 5\%        & 10\%      & 5\%        & 10\%      & 5\%        & 10\%     
 & 5\%         & 10\%       & 5\%        & 10\%      & 5\%        & 10\%      & 5\%        & 10\%      \\
  \hline
  0.15 & 4.7 & 8.6 & 5.5 & 9.8 & 6.4 & 10.0 & 4.8 & 10.8 &    4.1 & 9.2 & 4.8 & 9.4 & 5.0 & 9.7 & 3.9 & 8.6 \\ 
  0.175 & 5.5 & 10.4 & 5.5 & 11.1 & 4.8 & 9.6 & 5.1 & 9.3 &    5.1 & 8.9 & 5.0 & 9.3 & 5.4 & 10.4 & 5.3 & 9.9 \\ 
  0.2 & 5.2 & 9.9 & 5.9 & 10.6 & 6.1 & 9.3 & 4.8 & 8.4 &    5.4 & 9.8 & 4.9 & 10.9 & 5.4 & 10.7 & 4.6 & 9.5 \\ 
  0.225 & 5.4 & 10.2 & 4.8 & 9.9 & 4.8 & 9.5 & 5.0 & 8.9 &    5.0 & 9.5 & 5.7 & 10.3 & 4.4 & 8.9 & 4.7 & 10.4 \\ 
\hline
\end{tabular}
 \caption{Simulated sizes (in \%) of KPSS, R/S, V/S and K/S-type tests for model \ref{M1} and \ref{M2} with the sample size $1000$, $m$ and $\tau_n$ determined by MV selection.}
\label{tb:bn}
\end{table}

\begin{figure}
  \centering
  \includegraphics[width= 0.45\linewidth]{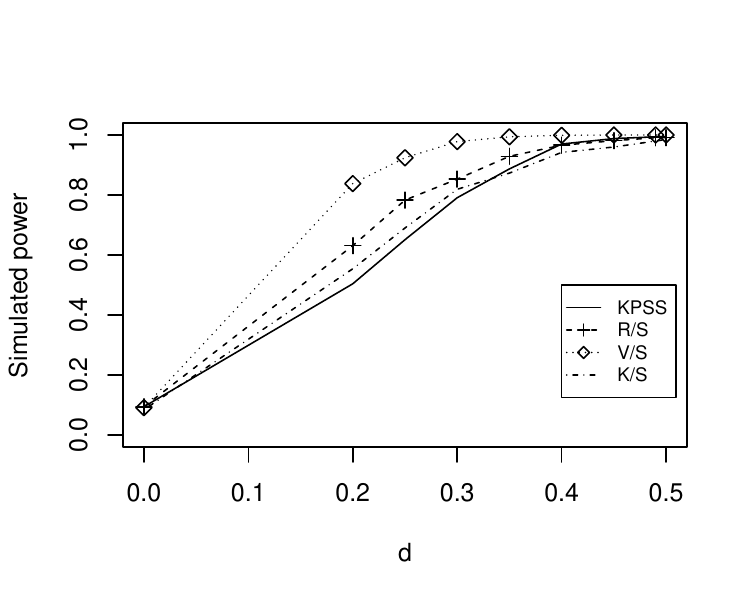}
  \includegraphics[width= 0.45\linewidth]{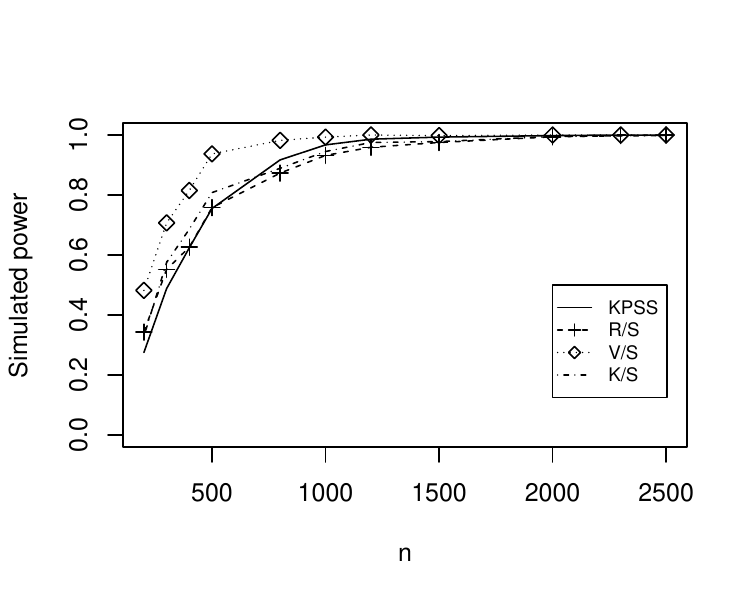}
  \caption{Simulated powers of KPSS and related tests with nominal level $0.1$. 
  Left: $n=1500$ and $d$ increases from $0$ to $0.5$; Right: $d = 0.4$ and $n$ increases from $200$ to $2500$.}
  \label{fig:power2}
\end{figure}

Recall  the long-memory error process of \eqref{Locallyerror}, which can be written as  $e_{i,n}^{(d)} = (1-\B)^{-d}u_{i,n}$ where $\B$ is the lag operator. \cref{fig:power2} displays the power performance of the KPSS and related tests for model \ref{M1} with nominal level $0.1$. 
The left panel reports simulated rejection rates of KPSS, R/S, V/S, and K/S-type tests as the long memory parameter $d$ increases from $0$ to $0.5$ with sample size $1500$. The power of all four KPSS and related tests increases to 1 as $d$ approaches to $1/2$, while the V/S-type test remains the most powerful among all tests. The right panel depicts  the power performance of the four tests as the sample size grows from $200$ to $2500$ when $d = 0.4$. The figure shows that the power of each test increases to $1$ as the sample size grows, and the V/S-type test performs the best among the four tests whenever the sample size is larger than $400$. The power performance of models \ref{M0} and \ref{M2} are shown in  \cref*{fig:power1} and \cref*{fig:power3} of the online supplement where the V/S-type test achieves the best power under most alternatives and sample sizes considered.  

 \subsection{Analysis of Hong Kong hospital data}\label{subsec:HKdata}
 
Hong Kong circulatory and respiratory data  contains daily measurements of pollutants and daily hospital admissions in Hong Kong between January 1st, 1994 and December 31st, 1995. The dataset has been studied by 
\cite{fan2000simultaneous}, 
\cite{zhou2010simultaneous}, and \cite{wu2018gradient} among others. They investigated the relationship between the levels of pollutants and the total number of hospital admissions of circulation and respiration,see \cref*{fig:hk_data} in \cref*{hk} of  the supplement for the observed series. Under the assumption of $i.i.d.$ observations, \cite{fan2000simultaneous} claimed that sulphur dioxide ($\text{SO}_2$) is not significant. 
Using the non-stationary model, \cite{zhou2010simultaneous} found all three pollutants ($\text{SO}_2$, nitrogen dioxide ($\text{NO}_2$) and dust) are significant. In both cases, they assumed the observations were SRD. We shall examine this assumption via the KPSS and related tests.  Consider the following time-varying coefficient model,
 \begin{equation}\label{eq:hk_model}
   y_{i,n}=\beta_{1}(t_i)+\sum_{p=2}^{4} \beta_{p}(t_i) x_{i, p, n}+\varepsilon_{i}, \quad \quad i=1, \ldots, n,
 \end{equation}
 where $(y_{i,n})$ is the series of daily total number of hospital admissions of circulation and respiration and $(x_{i, p, n})$, $p = 2, 3, 4$, represent the series of daily levels  (in micrograms per cubic meter) of $\text{SO}_2$, $\text{NO}_2$ and dust, respectively. The sample size is $n = 2 \times 365 = 730$.
We first investigate whether the covariates $(x_{i, p, n})$, $p = 2, 3, 4$ are SRD series. 

We select the smoothing parameters $b_n$, $\eta_n$, $m$ and $\tau_n$ through the methods provided in \cref*{sel} in the online supplement and summarize the selected parameters in \cref*{tb:hk_par} in \cref*{hk} of the online supplement. We test for long memory in the three pollutants series via KPSS and related tests, presenting the $p$-values in \cref{tb:hk}. For each pollutant series we fail to reject it is SRD at the significance level $0.05$.  

  \begin{table}[ht]
 \centering
 \begin{tabular}{rrrrr|rrrrr}
   \hline
   & KPSS & R/S & V/S & K/S&  & KPSS & R/S & V/S & K/S\\ 
   \hline
   $\text{SO}_2$  & 0.358 & 0.076 & 0.087 & 0.543 & Dust & 0.668 & 0.103 & 0.124 & 0.663\\
  $\text{NO}_2$ & 0.476 & 0.818 & 0.232 & 0.748 &  \eqref{eq:hk_model} & 0.461 & 0.633 & 0.549 & 0.649 \\ 
 \hline
 \end{tabular} 
 \caption{The $p$-values for  $\text{SO}_2$, $\text{NO}_2$, dust and daily total number of hospital admissions modeled by \eqref{eq:hk_model}.}
 \label{tb:hk}
 \end{table}
 To test for long memory in the daily hospital admissions, we consider two approaches. The first approach is to model the hospital admissions by the time-varying trend model \eqref{eq:long_memory1} and implement the KPSS and related tests, i.e., the \cref*{trend_algorithms} in the online supplement. The KPSS test yields p-values $0.004$, and other tests yield p-values smaller than $2 \times 10^{-4}$.  All four tests reject the null hypothesis of short memory at the significance of $0.05$. The second approach is to model the hospital admissions via model \eqref{eq:hk_model} taking into account three pollutants ($\text{SO}_2, \text{NO}_2$ and dust) and apply the KPSS and related tests, i.e., the \cref{algorithm} and \cref*{algorithms} in the online supplement. The large $p$-values in the right panel of \cref{tb:hk} show that the KPSS and related tests fail to reject that the total number of hospital admissions is SRD at the significance level $0.05$. Although both of the two approaches are asymptotically correct, the misspecification of regression models tends to cause `spurious long memory' in finite samples. Therefore, our results conclude that the SRD assumption for \eqref{eq:hk_model} adopted by 
 \cite{fan2000simultaneous}, 
 \cite{zhou2010simultaneous},  \cite{wu2018gradient} and many others is reasonable.

\section{Conclusion and future work}\label{conclude}
 This paper develops bootstrap-assisted KPSS, R/S, K/S, and V/S-type nonparametric tests to detect long memory in time-varying coefficient linear models where the covariates and errors are allowed to be locally stationary and heteroscedastic. Under the null hypothesis, the fixed and local alternatives, we derive the limiting distributions of those test statistics and bootstrap statistics. In particular, we identify the conditions under which  the test statistics degenerate. Although such conditions cannot be directly identified from real data in practice, our proposed bootstrap-assisted KPSS and related tests are always asymptotically correct regardless of such conditions. We also establish the theory of Gaussian approximation to the partial sum process of the product of non-stationary SRD and LRD time series, which is of
separate interest and useful for a large class of problems in the analysis of (time-varying) linear models with LRD errors and SRD covariates.

 A comprehensive Monte Carlo study supports that our proposed KPSS and related tests have good size and power performance in finite samples and are robust to the choices of smoothing parameters. The tests are applied to Hong Kong circulatory and respiratory data, recognizing `spurious long memory' due to the misspecification in the conditional mean. Therefore, our proposed methods can be employed as regression diagnostics.  In \cref*{sec:covid} of the online supplement, we also apply the proposed tests to the COVID-19 data and identify that the time series of log cumulative confirmed cases of Japan and Ireland are LRD, while the time series of log cumulative confirmed deaths of Japan and Ireland are both SRD.

Recent studies have considered LRD models with time-varying long-memory parameter  $d(t)$, see for instance \cite{Detectinglrdependence} and \cite{ferreira2018estimation}. Extra simulations in \cref*{tvd} of the online supplement evidence that our proposed testing procedures are still consistent against the alternatives with time-varying non-negative $d(t)$, i.e., $d(t)>0$ for $t$ in a sub-interval of $[0,1]$. The derivation of the theoretical behavior of the test statistics and the bootstrap procedure with time-varying $d(t)$ is left for rewarding future work.
The extension of the proposed tests to $d \geq 1/2$ or $d<0$ for the null hypothesis (see also \cite{wu2006invariance}, \cite{Ioa2021}) is also challenging and meaningful. 

\appendix
\section{Proof of \texorpdfstring{\cref{thm:fixlocal}}{Theorem 4.2}}\label{sec:appendA}
This subsection provides the proof of \cref{thm:fixlocal}. We first present the following propositions which are needed for the proof. The \cref{prop:5.2} approximates the partial sum  of the product series by the partial sum of a LRD process weighted by the {\it expectation} of a SRD process. In \cref{lm:alt_gaussian}, we establish the Gaussian approximation scheme for the vector partial sum process of $(\boldsymbol \mu_W(t_i) e_{i,n}^{(d)})$. \cref{prop:5.3} is a non-trivial extension of \cref{prop:5.2} under the local alternatives. The proofs of \cref{lm:alt_gaussian} and \cref{prop:5.3} are postponed to  \cref*{sec:appendG} of the online supplements.

  \begin{proposition}\label{prop:5.2}
       Under Assumptions \ref{assumptionHp} and \ref{Ass-W},  we have
       \begin{align}
         \max_{\lf nb_n \rf + 1 \leq r \leq n- \lf  nb_n \rf }\left|\sum_{ i=\lf nb_n \rf+1}^r \mf x_{i,n} e_{i,n}^{(d)}-\sum_{ i=\lf nb_n \rf+1}^r \boldsymbol \mu_W(t_i) e_{i,n}^{(d)}\right| 
         &= \Op (\sqrt{n}(\log n)^d).
         \end{align}
     \end{proposition}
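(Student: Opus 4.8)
The plan is to write $\mathbf{x}_{i,n} - \boldsymbol{\mu}_W(t_i) = \mathbf{W}(t_i,\FF_i) - \E\mathbf{W}(t_i,\FF_i) =: \mathbf{V}(t_i,\FF_i)$, so that the quantity to control is the maximal partial sum $\max_r \left|\sum_{i=\lf nb_n\rf+1}^r \mathbf{V}(t_i,\FF_i) e_{i,n}^{(d)}\right|$. This is a partial sum process of the product of a \emph{mean-zero} SRD locally stationary process $(\mathbf{V}(t_i,\FF_i))$ and the LRD process $(e_{i,n}^{(d)})$; under Assumption \ref{A:HW} (exogeneity, $\E(H(t_j,\FF_j)\mid \mathbf{W}(t_j,\FF_j))=0$) one should moreover check that each summand is mean zero, or nearly so. First I would establish an $\mathcal{L}^2$ bound of the order $n(\log n)^{2d}$ for $\E|\sum_{i=\lf nb_n\rf+1}^r \mathbf{V}(t_i,\FF_i)e_{i,n}^{(d)}|^2$ uniformly in $r$: expand the square, and bound the covariance $\operatorname{Cov}(\mathbf{V}(t_i,\FF_i)e_{i,n}^{(d)}, \mathbf{V}(t_j,\FF_j)e_{j,n}^{(d)})$ for $i<j$ using the physical dependence measures. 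The key inputs are $\delta_4(\mathbf{W}^{(-1)},k)=O(\chi^k)$ (Assumption \ref{A:W_delta}), $\delta_4(H^{(d)},l,(-\infty,1])=O((1+l)^{d-1})$ (Proposition \ref{Prop31}), and the $\mathcal{L}^4$ bounds; a standard coupling argument (replace the far-past innovations in the $i$-indexed factor) shows that the contribution of pairs with $j-i$ large is controlled, essentially because the short-range factor $\mathbf{V}$ decorrelates geometrically while the long-range factor $e^{(d)}$ only contributes a $\log$-type correction to the variance of the $e$-partial sum, whose variance grows like $n^{2d}$ by Proposition \ref{prop:3.1}; the product structure downgrades the $n^{2d}$ rate to $n \cdot (\log n)^{2d}$.

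The passage from the $\mathcal{L}^2$ bound at a fixed $r$ to the uniform (maximal) bound is the second step. Here I would use a chaining / dyadic argument in the spirit of the maximal inequalities for sums of weakly dependent arrays (e.g. Menshov--Rademacher type, or the moment-based maximal inequality used in \cite{wu2011gaussian}): partition $\{\lf nb_n\rf+1,\dots,n\}$ into dyadic blocks, apply the $\mathcal{L}^2$ (or better, $\mathcal{L}^q$ for some $q>2$ to beat the logarithmic loss from the union over $O(\log n)$ dyadic levels) bound on each block, and sum. The $\mathcal{L}^4$ assumptions on $H$ and $\mathbf{W}$, combined with the geometric decay of $\delta_4(\mathbf{V},k)$, give a Rosenthal-type $\mathcal{L}^4$ bound on block sums of the same order (up to the $(\log n)^d$ factor), which is enough: after taking the max over $O(n)$ indices the extra factor is only polylogarithmic and is absorbed into the $(\log n)^d$ already present, or slightly worsens it — one should be a little careful to see that the bookkeeping still closes at $\sqrt{n}(\log n)^d$ and not, say, $\sqrt{n}(\log n)^{d+1/2}$; this is where choosing the higher moment $q=4$ rather than $q=2$ pays off.

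I expect the main obstacle to be the covariance/coupling estimate in the first step: one must carefully disentangle the dependence of $e_{i,n}^{(d)}$ on the \emph{same} innovation sequence $\FF_i$ that drives $\mathbf{V}(t_i,\FF_i)$, since $e^{(d)}$ is built from $u=H(t,\FF)$ and $\mathbf{x}$ from $\mathbf{W}(t,\FF)$ with shared shocks (the ``flexible dependence'' emphasized in the introduction). The clean way is: (i) for the diagonal-ish terms $|i-j|$ small, bound the covariance crudely by Cauchy--Schwarz using $\sup_t\|\mathbf{W}(t,\FF_0)\|_4<\infty$ and $\|e_{i,n}^{(d)}\|_4 = O(1)$ (which follows from Proposition \ref{Prop31} via the standard $\|\cdot\|_q \le \sum_l \delta_q(\cdot,l)$ bound, summable since $d<1/2$... wait, $\sum (1+l)^{d-1}$ diverges for $d>0$, so instead $\|\sum_{i\le r} e_{i,n}^{(d)}\|^2 = O(n^{2d})$ is used directly); (ii) for $|i-j|$ large, use the exogeneity $\E(\mathbf{V}(t_j,\FF_j)\mid\FF_{j-1})$-type conditioning together with the geometric $\delta_4(\mathbf{V},\cdot)$ decay to replace $\mathbf{V}(t_j,\FF_j)$ by a version independent of $\FF_i$, at cost $O(\chi^{|i-j|})$, after which the remaining factor is a partial-sum-weighted $e^{(d)}$ term whose second moment carries the $(\log n)^{2d}$. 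Summing the two regimes, the geometric tail kills the $n^{2d}$ that would otherwise come from the long-range factor and leaves $n(\log n)^{2d}$. Once this is in hand, everything else is routine.
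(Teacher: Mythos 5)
The central gap is in your Step 1: the uniform second–moment bound $\E\bigl|\sum_{i}(\mathbf x_{i,n}-\boldsymbol\mu_W(t_i))e_{i,n}^{(d)}\bigr|^2=O(n(\log n)^{2d})$ is asserted rather than derived, and the mechanism you offer for it is not the one that produces the $(\log n)^d$ factor. If you bound $\operatorname{Cov}\bigl(\mathbf V_ie_{i}^{(d)},\mathbf V_je_{j}^{(d)}\bigr)$ through the physical dependence of the product process (which is only $O((1+k)^{d-1})$, cf.\ \cref{lm:delta_xed}), you get $\sum_{i,j}|i-j|^{2d-1}\asymp n^{2d+1}$, the same order as the main term and useless; if instead you couple away the far past of the covariate over a window of width $m$ and use Cauchy--Schwarz with $\|\mathbf V_i\|_4\|e_i^{(d)}\|_4=O(1)$ on the $O(nm)$ near-diagonal pairs, you get a norm of order $\sqrt{nm}=\sqrt{n}(\log n)^{1/2}$, which exceeds the target $\sqrt n(\log n)^d$ for every $d<1/2$. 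The $(\log n)^d$ does not come from ``a log-type correction to the variance of the $e$-partial sum''; it comes from truncating the fractional filter at lag $m\asymp\log n$ (the level needed to make the geometric coupling error $n\chi^m$ negligible). The paper's proof replaces $\mathbf x_i$ by $\tilde{\mathbf x}_{i,m}=\E(\mathbf x_i\mid\varepsilon_{i-m},\dots,\varepsilon_i)$, splits $e_i^{(d)}$ at lag $m$, and then (i) handles the recent lags as $\sum_{j\le m}\psi_j(d)$ times $O(\sqrt n)$ maximal partial sums of SRD products, giving $\sqrt n\,m^d$ since $\sum_{j\le m}\psi_j(d)\asymp m^d$; (ii) for the far past $\tilde e^{(d)}_{i,m}=\sum_{j>m}\psi_ju_{i-j}$, which is $\FF_{i-m-1}$-measurable and independent of $\tilde{\mathbf x}_{i,m}$, uses the exact identity $\E(\tilde{\mathbf x}_{i,m}\tilde e^{(d)}_{i,m}\mid\FF_{i-m-1})=\boldsymbol\mu_W(t_i)\tilde e^{(d)}_{i,m}$ --- which is precisely the replacement being proved --- and bounds the remaining martingale projections $\sum_{l\le m}\proj_{i-l}$ by Doob at $O(\sqrt n)$, exploiting orthogonality across the levels $l$. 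None of these ingredients (the $\log n$ truncation, the exact conditional-expectation cancellation, the across-$l$ orthogonality, or alternatively the sharper bound $\|\tilde e^{(d)}_{i,m}\|_4=O(m^{d-1/2})$ on near-diagonal terms) appears in your sketch, and without at least one of them the bookkeeping does not close at $\sqrt n(\log n)^d$.

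Two smaller points. Your aside in step (i) is muddled: $\|e^{(d)}_{i,n}\|_4=O(1)$ does hold, but via Burkholder and square-summability of $\delta_4(H^{(d)},\cdot)$ (as in the proof of \cref{lm:delta_xed}), not via $\sum_l\delta_4$, and the substitute you reach for ($\|\sum_{i\le r}e^{(d)}_{i,n}\|^2=O(n^{2d+1})$, cf.\ \cref{prop:3.1}) is not what near-diagonal covariances require. On Step 2, your instinct is right that $\mathcal L^2$-only dyadic chaining loses a genuine logarithm and that fourth moments are needed; note, however, that the paper never needs chaining at all, because after its decomposition every piece is either a martingale (Doob/Burkholder) or a finite $\psi$-weighted combination of SRD partial-sum maxima. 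So your overall plan (fixed-$r$ moment bound plus maximal inequality) could in principle be completed, but only after importing essentially the truncation-and-conditioning structure of the paper's argument, which is exactly the part your proposal leaves unproved.
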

     In \cref{prop:5.2}, the rate $\Op (\sqrt{n}(\log n)^d)$ is due to the complicated dependence between the covariate process and the driving shocks $(u_{i,n})$. When $(\mf x_{i,n})$ and $(e_{i,n}^{(d)})$ are independent, the bound can be further sharpened to $\Op(\sqrt{n})$. The discrepancy manifests  the subtle effect of heteroscedasticity under the fixed alternatives. 
     
     
     \begin{remark}
      The approximation result in \cref{prop:5.2} is frequently considered in the context of regression with LRD errors. With arguments given in the proof of \cref{prop:5.2}, we can show that for any deterministic function $v(\cdot): \mathbb R^p\rightarrow \mathbb R$,  the  {\normalfont partial sum process} $\sum_{ i=\lf nb_n \rf+1}^r  v(\mf x_{i,n}) e_{i,n}^{(d)}$ can be approximated by $\sum_{ i=\lf nb_n \rf+1}^r \E( v(\mf x_{i,n})) e_{i,n}^{(d)}$, i.e.,  \begin{align}\max_{\lf nb_n \rf + 1 \leq r \leq n- \lf  nb_n \rf }\left|\sum_{ i=\lf nb_n \rf+1}^r  v(\mf x_{i,n}) e_{i,n}^{(d)}-\sum_{ i=\lf nb_n \rf+1}^r \E( v(\mf x_{i,n})) e_{i,n}^{(d)}\right| 
      = \Op (\sqrt{n}(\log n)^d).
      \end{align}
     Similar to \cref{prop:5.2}, if $\mf x_{i,n}$ is independent of $e_{i,n}$, the approximation error will be reduced to $\Op(\sqrt n)$.  As a consequence, our result is also in line with the result in  Section 7.2.3 of \cite{BeranLongmemory} for the quantity $\sum_{ i=\lf nb_n \rf+1}^{n-\lf nb_n\rf}  v(\mf x_{i,n}) e_{i,n}^{(d)}$. Furthermore, our proof extends that of Proposition 1 and 2 in \cite{kulik2012conditional} in a non-trivial way, since they focus on the {\normalfont partial sum} assuming   $i.i.d.$ covariates $\{ \mf x_{i,n} \}$ and $i.i.d.$ errors $\{e_{i,n}\}$. \cite{kulik2012conditional} utilized their Propositions 1 and 2 to estimate the conditional variance in the heteroscedastic model \eqref{model1}.
     \end{remark}
     \begin{proposition}\label{lm:alt_gaussian}
     Under Assumptions \ref{assumptionHp} and \ref{Ass-W}, on a possibly richer probability space, there exists a sequence of $i.i.d.$. standard normal $\{v_i\}_{i\in \mathbb Z}$ and $\mf R_{k, n}=\sum_{j=0}^{\infty} \boldsymbol \mu_W(t_k)\psi_j \sigma_H \left(t_{k-j}\right) v_{k-j}$, $1\leq k\leq n$, such that
     \begin{align}
       \max _{1 \leq s \leq n}\left|\sum_{k=1}^{s}\left(\boldsymbol \mu_W(t_k) e_{k,n}^{(d)}-\mf R_{k, n}\right)\right|=O_{p}\left(n^{1+\alpha_0(d-1/2)}\right).
     \end{align}
     where $ \alpha_0  \in (1, 4/3) $ and therefore $n^{1+\alpha_0(d-1/2)}  = o(n^{d+1/2})$.
   \end{proposition}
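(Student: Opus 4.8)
The plan is to decompose $e_k^{(d)} = \sum_{j=0}^\infty \psi_j(d) u_{k-j,n} = \sum_{j=0}^\infty \psi_j(d) H(t_{k-j},\F_{k-j})$ and to compare the partial sums of $\boldsymbol\mu_W(t_k)e_k^{(d)}$ with those of the Gaussian surrogate $\mf R_{k,n}=\sum_{j=0}^\infty \boldsymbol\mu_W(t_k)\psi_j(d)\sigma_H(t_{k-j})v_{k-j}$. The natural route is a two-stage approximation: first replace the SRD driving shocks $u_{k-j,n}=H(t_{k-j},\F_{k-j})$ by an $m$-dependent (finite memory) approximation $H_m(t_{k-j},\F_{k-j}):=\E[H(t_{k-j},\F_{k-j})\mid \varepsilon_{k-j-m},\dots,\varepsilon_{k-j}]$ using the geometric decay $\delta_4(H,k,(-\infty,1])=O(\chi^k)$ from Assumption \ref{assumptionHp}\ref{A:H_delta_long}, and then invoke the Gaussian approximation scheme of \cite{wu2011gaussian} (or the blocking/Skorokhod-embedding construction therein) for the resulting locally stationary SRD array to couple the block sums of the short-memory part with Brownian increments whose local variance is $\sigma_H^2(t)$. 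The fractional weights $\psi_j(d)\sim j^{d-1}/\Gamma(d)$ then act as a deterministic convolution kernel applied to both the true partial-sum process and its Gaussian counterpart.

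The key steps, in order: \textbf{(i)} Write $\sum_{k=1}^s \boldsymbol\mu_W(t_k)e_k^{(d)} = \sum_{k=1}^s \boldsymbol\mu_W(t_k)\sum_{j=0}^\infty\psi_j(d)H(t_{k-j},\F_{k-j})$ and interchange summation to express it as $\sum_{l\le s}\big(\sum_{k=\max(1,l)}^s \boldsymbol\mu_W(t_k)\psi_{k-l}(d)\big)H(t_l,\F_l)$ plus a prehistoric tail; control the tail ($l\le 0$) using $\sum_j \psi_j(d)^2 <\infty$ since $d<1/2$ and the summability $\delta_2(H^{(d)},l,(-\infty,1])=O((1+l)^{d-1})$ from \cref{Prop31}. \textbf{(ii)} For the SRD field $\{H(t_l,\F_l)\}$, apply the Gaussian approximation of \cite{wu2011gaussian}: on an enriched space there exist i.i.d.\ $N(0,1)$ variables $\{v_l\}$ with $\max_{1\le s\le n}\big|\sum_{l=1}^s (H(t_l,\F_l)-\sigma_H(t_l)v_l)\big| = O_p(n^{1/4}\log n)$ or the analogous rate valid under geometric $\delta_4$-decay. \textbf{(iii)} Transfer this coupling through the fractional convolution: bound $\max_{s}\big|\sum_l c_{l,s}\,(H(t_l,\F_l)-\sigma_H(t_l)v_l)\big|$, where $c_{l,s}=\sum_{k}\boldsymbol\mu_W(t_k)\psi_{k-l}(d)$, via summation by parts in $l$, using that the strong-approximation error process has increments of size $O_p(n^{1/4+\epsilon})$ and that the coefficients $c_{l,s}$ have total variation of order $n^d$ (from $\sum_j|\psi_j(d)-\psi_{j+1}(d)| = O(1)$ times the $O(n^d)$ partial sums of $\psi$). \textbf{(iv)} Separately absorb the error from the $m$-dependent truncation and from replacing $\boldsymbol\mu_W(t_{k})$ (in the true sum) versus the correctly-aligned weights in $\mf R_{k,n}$ — here the Lipschitz continuity of $\boldsymbol\mu_W$ and of $\sigma_H$ (Assumptions \ref{Ass-W}\ref{A:W}, \ref{assumptionHp}\ref{A:H_lrv} together with the smoothness noted after it) give an $O(n^{-1})\cdot(\text{length})$ gain against the $\psi_j(d)$ tails. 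Optimizing the truncation level $m\asymp \log n$ and balancing the strong-approximation rate against the $n^d$ coefficient-variation factor yields the stated bound $O_p(n^{1+\alpha_0(d-1/2)})$ for any $\alpha_0\in(1,4/3)$, which is $o(n^{d+1/2})$.

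The main obstacle I anticipate is \textbf{step (iii)}: the fractional weights decay only polynomially, so the convolution does not localize, and a crude bound (sup of the approximation-error process times $\sum|c_{l,s}|\asymp n^{d+1}\cdot n^{-1}\cdot n = n^{d+1}$... ) is far too lossy. One must instead exploit cancellation in the strong-approximation error process itself — i.e.\ use that $\sum_{l=1}^L (H(t_l,\F_l)-\sigma_H(t_l)v_l)$ is $O_p(n^{\rho})$ with $\rho<1/2$ \emph{uniformly in $L$}, combine it with an Abel/summation-by-parts argument against the slowly-varying, bounded-variation (on the relevant scale) coefficient sequence $c_{l,s}$, and track how the $n^d$-sized fluctuation of the partial sums of $\psi_j(d)$ interacts with $\rho$. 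Getting the exponent down to $1+\alpha_0(d-1/2)$ with $\alpha_0$ allowed up to $4/3$ presumably requires the sharpest available moment/strong-approximation rate for the SRD component (fourth moments, hence a rate like $n^{3/8}$ or $n^{1/2-1/(2\cdot 4)}$ type), and carefully checking that the prehistoric-tail and truncation errors from steps (i) and (iv) are dominated by it.
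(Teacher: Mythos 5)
Your within-sample treatment is essentially the paper's: interchange the order of summation so the LRD sum becomes fractional weights applied to the SRD shocks, couple the partial sums of $u_j=H(t_j,\F_j)$ with $\sum_j \sigma_H(t_j)v_j$ via the Wu--Zhou (2011) strong approximation at rate $o_{\pp}(n^{1/4}\log^2 n)$, and transfer the coupling through the weights by summation by parts, paying a coefficient-variation factor of order $n^{d}$ (the paper's term $A$, of order $n^{1/4+q+d}$). The $m$-dependent pre-approximation of $H$ at level $m\asymp\log n$ is unnecessary here (the strong approximation applies directly under the geometric decay of $\delta_4$), but it is harmless.

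The genuine gap is your treatment of the prehistoric part, and that is exactly where the stated exponent comes from. The term $\sum_{j\ge 0}u_{-j}\sum_{k=1}^{n}\boldsymbol\mu_W(t_k)\psi_{k+j}$ is \emph{not} a negligible remainder controllable by $\sum_j\psi_j^2(d)<\infty$: its coefficients are of size roughly $(n+j)^d-j^d$, so its $\mathcal L^2$ norm is of order $n^{d+1/2}$, i.e.\ the same order as the main term; moreover $\mf R_{k,n}$ itself contains prehistoric Gaussians $v_{k-j}$ with $k-j\le 0$, so this block must be \emph{coupled}, not discarded. The paper handles it by extending the Gaussian coupling backwards over a window of length $N=\lfloor n^{\alpha_0}\rfloor$ (error of order $N^{1/4+q}$, multiplied through summation by parts by a coefficient-variation factor $N^{d}$, giving the terms $C,D$ of order $n^{\alpha_0/4+q+\alpha_0 d}$), and then bounding the \emph{uncoupled} tail beyond lag $N$ directly in $\mathcal L^2$, which yields exactly $\Op(n^{1+\alpha_0(d-1/2)})$ (terms $E,F$). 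The balance $1+\alpha_0(d-1/2)=\alpha_0/4+q+\alpha_0 d$ gives $\alpha_0=4(1-q)/3$, which is why $\alpha_0$ ranges over $(1,4/3)$ and why the final rate is $n^{1+\alpha_0(d-1/2)}$ rather than anything produced by your $\log n$ truncation or by the within-sample term alone. Your proposal, which attributes the exponent to "optimizing $m\asymp\log n$" and balancing against the $n^d$ coefficient variation, would not reproduce this rate, and without the explicit prehistoric truncation-plus-coupling argument the proof does not close. (A minor further point: the needed strong-approximation rate is the $n^{1/4}\log^2 n$ rate under fourth moments, not an $n^{3/8}$-type rate.)
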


\begin{proposition}\label{prop:5.3}
Under Assumptions \ref{assumptionHp} and \ref{Ass-W},  we have
  \begin{align}
    \max_{\nb + 1 \leq r \leq n - \nb} \Big|\sum_{i = \nb +1}^r \mf x_{i,n} e_{i,n}^{(d_n)} - \sum_{i= \nb +1}^r \left\{\mf x_{i,n} e_{i,n} + \boldsymbol \mu_W(t_i) (e_{i,n}^{(d_n)} - e_{i,n})\right\} \Big|  = \op(\sqrt{n}).
   \end{align}
\end{proposition}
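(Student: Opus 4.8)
Here is how I would approach \cref{prop:5.3}.

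\medskip
\noindent\emph{Reduction.} The bracketed increment telescopes to a pure product:
\[
  \mf x_{i,n}e_{i,n}^{(d_n)}-\mf x_{i,n}e_{i,n}-\boldsymbol\mu_W(t_i)\big(e_{i,n}^{(d_n)}-e_{i,n}\big)=\big(\mf x_{i,n}-\boldsymbol\mu_W(t_i)\big)\big(e_{i,n}^{(d_n)}-e_{i,n}\big).
\]
Write $\mf V_{i,n}=\mf x_{i,n}-\boldsymbol\mu_W(t_i)=\mf W(t_i,\FF_i)-\boldsymbol\mu_W(t_i)$, a centered locally stationary process whose nonconstant coordinates are GMC by \ref{A:W} and \ref{A:W_delta}; since $\psi_0(d_n)=1$ we have $e_{i,n}^{(d_n)}-e_{i,n}=\sum_{k\ge1}\psi_k(d_n)u_{i-k,n}$. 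Thus it suffices to prove $\max_{\nb+1\le r\le n-\nb}|\sum_{i=\nb+1}^{r}\mf V_{i,n}\sum_{k\ge1}\psi_k(d_n)u_{i-k,n}|=\op(\sqrt n)$. Note that \cref{prop:5.2} applied with $d=d_n$ gives only $\Op(\sqrt n(\log n)^{d_n})=\Op(\sqrt n\,e^{d_n\log\log n})=\Op(\sqrt n)$, which is not $\op(\sqrt n)$; the improvement comes from having subtracted $e_{i,n}$, so that the filter $(\psi_k(d_n))_{k\ge1}$ is uniformly $O(d_n)=O(1/\log n)$ small.

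\medskip
\noindent\emph{Coefficient estimate and truncation.} I would first record the elementary bound $\psi_k(d_n)=\frac{d_n}{k}\prod_{j=1}^{k-1}(1+\frac{d_n}{j})\le\frac{d_n}{k}\exp(d_n\sum_{j<k}1/j)\le C\,d_n k^{d_n-1}$, so $\psi_k(d_n)\le Cd_n/k$ uniformly for $1\le k\le n$ (as $k^{d_n}\le n^{d_n}=e^{c}$) and $\sum_{k>n}\psi_k(d_n)^2=O(d_n^2 n^{2d_n-1})=O(d_n^2/n)$. Together with the geometric decay of the autocovariances of $(u_{i,n})$ (\ref{A:H_delta_long}), the last bound gives $\|\sum_{k>n}\psi_k(d_n)u_{i-k,n}\|_2=O(d_n/\sqrt n)$ uniformly in $i$, so the prehistoric tail contributes $\op(\sqrt n)$ to the maximum via a crude physical--dependence--measure bound; it remains to treat $\sum_{k=1}^{n}\psi_k(d_n)u_{i-k,n}$.

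\medskip
\noindent\emph{The core increment bound (main obstacle).} The heart of the proof is the estimate, uniform over $\nb<a\le r\le n-\nb$,
\[
  \Big\|\sum_{i=a}^{r}\mf V_{i,n}\big(e_{i,n}^{(d_n)}-e_{i,n}\big)\Big\|_2^2\le C\,n\,d_n^2=O(n/\log^2 n)=o(n),
\]
together with its fourth--moment analogue $\|\sum_{i=a}^{r}\mf V_{i,n}(e_{i,n}^{(d_n)}-e_{i,n})\|_4^4\le C\,d_n^4(r-a)^2$. I would prove the $\mathcal L^2$ bound by expanding the square (coordinatewise) into $\sum_{k,l=1}^{n}\psi_k(d_n)\psi_l(d_n)\sum_{i,j=a}^{r}\E[V_{i,n}^{(m)}V_{j,n}^{(m)}u_{i-k,n}u_{j-l,n}]$, bounding the fourth--order mixed moments via the GMC of $\mf W^{(-1)}$ and $H$ (\ref{A:W_delta}, \ref{A:H_delta_long}) and the exogeneity \ref{A:HW}, so that the mixed moments are controlled once the four indices $\{i,j,i-k,j-l\}$ are well separated and the sum over $i,j$ costs only $O(r-a)$. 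The residual sum over $(k,l)$, which diverges if one bounds $\sum_k\psi_k(d_n)$ directly, is made convergent by pairing $\psi_k(d_n)\psi_l(d_n)=O(d_n^2/(kl))$ against the geometric covariance of $(u_{i,n})$: one shows $\sum_{l\ge1}\psi_{l+m}(d_n)\psi_l(d_n)=O(d_n^2\log(2+|m|)/(1+|m|))$ and then sums against $\operatorname{Cov}(u_\cdot,u_\cdot)$ to obtain $O(d_n^2)$. The fourth--moment bound follows from the same bookkeeping combined with a Rosenthal--type inequality for weakly dependent sums. This accounting --- identifying the index patterns that produce the $O(r-a)$ contribution and checking that each carries the $d_n^2$ prefactor --- is the technical crux.

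\medskip
\noindent\emph{Passing to the maximum.} Finally I would use dyadic chaining. Put $S_r=\sum_{i=\nb+1}^{r}\mf V_{i,n}(e_{i,n}^{(d_n)}-e_{i,n})$ and split $\max_r|S_r|$ over $O(\log n)$ dyadic scales into maxima of block increments; at scale $s$ there are $O(n2^{-s})$ disjoint blocks of length $\le2^s$, so the fourth--moment bound yields $\|\max_{\text{scale }s}|\text{increment}|\|_4\le(O(n2^{-s})\cdot C\,d_n^4 2^{2s})^{1/4}=O(d_n n^{1/4}2^{s/4})$, and summing this geometric series over $0\le s\le\log_2 n$ gives $\|\max_r|S_r|\|_4=O(d_n\sqrt n)=O(\sqrt n/\log n)$. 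Hence $\max_r|S_r|=\Op(d_n\sqrt n)=\op(\sqrt n)$. To summarize, the obstacle is the third step: $\psi_k(d_n)\sim d_n/k$ is only borderline summable, a lag--by--lag triangle inequality loses a factor $\log n$ and yields merely $\Op(\sqrt n)$, so one must extract the genuine cancellation that turns $\sum_k\psi_k(d_n)=\infty$ into $\sum_{k,l}\psi_k\psi_l\,|\operatorname{Cov}(u_\cdot,u_\cdot)|=O(d_n^2)$; the $\log n$ thereby recovered is exactly what promotes $\Op(\sqrt n)$ to $\op(\sqrt n)$.
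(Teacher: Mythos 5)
Your route is genuinely different from the paper's. The paper splits the filter at lag $L=\lf(\log n)^{1/2}\rf$: for $j\le L$ it bounds the $\mf x_i$-weighted and the $\bs\mu_W$-weighted sums lag by lag, using only $\psi_j(d_n)=O(d_n)$ and the $\sqrt n$ size of $\max_r|\sum_i\mf x_iu_{i-j}|$, which costs a factor $L$ and motivates the choice $L=(\log n)^{1/2}$; for $j>L$ it replaces $\mf x_i$ by an $m$-dependent version ($m\asymp\log n$) and conditions on $\FF_{i-m-1}$, so the conditional-mean part is \emph{exactly} the $\bs\mu_W(t_i)$ term and the remainder is a martingale of size $\sqrt n\,\|\sum_{j>m}\psi_j(d_n)u_{i-j}\|_4=O(\sqrt n(\log n)^{-1/2})$, handled by Doob's inequality with no fourth-moment expansion and no chaining. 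You instead keep the centered product $(\mf x_i-\bs\mu_W(t_i))(e_i^{(d_n)}-e_i)$ intact and aim for second/fourth-moment bounds on block sums plus dyadic chaining, exploiting $\psi_k(d_n)\le Cd_n/k$ and $\sum_l\psi_l(d_n)\psi_{l+m}(d_n)=O(d_n^2\log(2+|m|)/(1+|m|))$. If completed, this yields the sharper rate $\Op(d_n\sqrt n)$ rather than the paper's $\Op(\sqrt n(\log n)^{-1/2})$ (both suffice), at the price of controlling joint fourth-order (and, for your $\mathcal L^4$ increment bound, effectively eighth-order) mixed moments of the covariate and shock processes — exactly the bookkeeping the paper's conditioning trick avoids.

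There is, however, a concrete gap at the crux. You appeal to the exogeneity condition \ref{A:HW} to tame $\E[V_{i}V_{j}u_{i-k}u_{j-l}]$, but \ref{A:HW} is purely contemporaneous and says nothing about $\E[V_iu_{i-k}]$ for $k\ge1$; under Assumptions \ref{assumptionHp} and \ref{Ass-W} these lagged cross-covariances are only $O(\chi^k)$, not zero. In your expansion the pairing $\E[V_iu_{i-k}]\,\E[V_ju_{j-l}]$ has no decay in $|i-j|$, so if it does not vanish then already the mean $\E\sum_{i=a}^rV_i(e_i^{(d_n)}-e_i)=\sum_{i}\sum_{k\ge1}\psi_k(d_n)\operatorname{Cov}(V_i,u_{i-k})$ can be of order $(r-a)d_n\asymp n/\log n\gg\sqrt n$, and your claimed bound $\|\sum_{i=a}^rV_i(e_i^{(d_n)}-e_i)\|_2^2\le Cnd_n^2$ fails. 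To be fair, the paper's Step (i) rests on the same implicit centering of the lagged products (it needs $\|\max_r|\sum_i\mf x_iu_{i-j}|\|=O(\sqrt n)$ for $j\ge1$), so this is a shared hypothesis rather than a defect peculiar to your argument — but since your whole estimate is a moment bound, you must state it explicitly or handle the mean separately. In addition, the fourth-moment increment bound $\le Cd_n^4(r-a)^2$ and the summability of the joint cumulants of the nonlinear GMC processes are asserted, not proved; they need a genuine argument (for instance $m$-dependence approximation plus a Rosenthal-type inequality, verifying that every surviving index pattern carries the $d_n$ factors). The final dyadic maximal step is fine once those bounds are in hand.
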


In the following proofs for the sake of simplicity, we will omit the index $n$ in $e_{i,n}, \mf x_{i,n}, y_{i,n}, u_{i,n}$ and use $\psi_j$ to represent $\psi_j(d)$ when we discuss the fixed alternatives and to represent $\psi_j(d_n)$ for the theory of the local alternatives.
 \subsection{Proof of \texorpdfstring{\cref{thm:fixlocal}}{Theorem 4.2}}
Result  (i) follows from \cref{lm:alt_gaussian} and \cref{prop:5.2}. With regard to (ii),
  observe that
  \begin{align}
  \sum_{i= \nb +1}^r \left\{\mf x_i e_i + {\bs \mu}_W(t_i) (e_i^{(d_n)} - e_i)\right\} = \sum_{i= \nb +1}^r  \mf x_i e_i + \sum_{j=1}^{\infty} \sum_{i= \nb +1}^r{\bs \mu}_W(t_i) \psi_j u_{i-j}.
  \end{align}
  The proof follows from \cref{lm:alt_gaussian},   \cref{prop:5.3} and \eqref{eq:p_gaussian} of the online supplement. \hfill $\Box$

\subsection{Proof of \texorpdfstring{\cref{prop:5.2}}{Proposition G.2}}
 Let $\tilde m = M \log n$,  $\tilde{e}^{(d)}_{i,\tilde m} = \sum_{j= \tilde m+1}^{\infty}\psi_j u_{i-j}$, $\tilde{\mf{x}}_{i,\tilde m} = \E(\mf x_i| \varepsilon_i,\cdots,\varepsilon_{i-\tilde m})$.
 Firstly, we can approximate $\sum_{ i=\lf nb_n \rf+1}^r \mf x_i e_i^{(d)}$ by $\sum_{ i=\lf nb_n \rf+1}^r \tilde{\mf{x}}_{i,\tilde m} e_i^{(d)}$ in that by \eqref{eq:marx} of the online supplement,
  \begin{align}
   \left\| \max_{\lf nb_n \rf + 1 \leq r \leq n- \lf  nb_n \rf }\left| \sum_{ i=\lf nb_n \rf+1}^r  (\mf x_i -  \tilde{\mf{x}}_{i,\tilde m} )e_i^{(d)}\right|  \right\| \leq \sum_{ i=\lf nb_n \rf+1}^{n - \lf nb_n\rf}  \| \mf x_i -  \tilde{\mf{x}}_{i,\tilde m}\|_4 \| e_i^{(d)}\|_4 = O(n\chi^{\tilde m}).
   \label{eq:mlong1}
  \end{align}
  For every fixed $j = 1,2, \cdots, \tilde m$, notice that
   $(\tilde{\mf{x}}_{i,\tilde m} u_{i-j})_{i=1}^n$ is a SRD  sequence similar to $(\mf x_iu_i)_{i=1}^n$. Therefore, we have
  \begin{align}
   \left \|\max_{\lf nb_n \rf + 1 \leq r \leq n- \lf  nb_n \rf }\left|\sum_{ i=\lf nb_n \rf+1}^r \tilde{\mf{x}}_{i,\tilde m} (e_i^{(d)}- \tilde{e}^{(d)}_{i,\tilde m}) \right| \right \| &\leq \sum_{j=0}^{\tilde m} \psi_j \left \|\max_{\lf nb_n \rf + 1 \leq r \leq n- \lf  nb_n \rf } \left|\sum_{ i=\lf nb_n \rf+1}^r \tilde{\mf{x}}_{i,\tilde m} u_{i-j} \right| \right \|\\ & = O(\sqrt{n}\tilde m^d).
   \label{eq:mlong11}
  \end{align}
 For $k \in \mathbb Z$, we define the projection operator as $\proj_k \cdot = \E(\cdot|\FF_k) -\E(\cdot|\FF_{k-1})$. Then,  we have the following decomposition
 \begin{align}
   \sum_{ i=\lf nb_n \rf+1}^r \tilde{\mf{x}}_{i,\tilde m} \tilde e_{i,\tilde m}^{(d)} &= \sum_{l = 0}^{\tilde m} \sum_{i = \lf nb_n \rf + 1}^r \proj_{i-l} \left (\tilde{\mf{x}}_{i,\tilde m} \tilde{e}^{(d)}_{i,\tilde m}\right) + \sum_{i = \lf nb_n \rf + 1}^r \E(\tilde{\mf{x}}_{i,\tilde m} \tilde{e}^{(d)}_{i,\tilde m}|\FF_{i-\tilde m-1}):= T_1 + T_2.
   \label{eq:T1T2}
 \end{align}
 We proceed to show that the $T_2$  is the leading term. Applying Doob's inequality to the martingale $\sum_{i = \lf nb_n \rf + 1}^{r} \proj_{i-l} \left (\tilde{\mf{x}}_{i,\tilde m} \tilde{e}^{(d)}_{i,\tilde m} \right)$, we have
 \begin{align}
   \left \| \max_{\lf nb_n \rf + 1 \leq r \leq n- \lf  nb_n \rf }|T_1 | \right \| 
   & \leq 2 \sum_{l = 0}^{\tilde m}  \left\|\sum_{i = \lf nb_n \rf + 1}^{n -\lf nb_n\rf } \proj_{i-l} \left (\tilde{\mf{x}}_{i,\tilde m} \tilde{e}^{(d)}_{i,\tilde m}\right) \right\|.
 \end{align}
 Let $\tilde{\mf{x}}^{(i-l)}_{i,\tilde m}$, $\tilde{e}^{(d),(i-l)}_{i,\tilde m}$ denote the random variables replacing $\varepsilon_{i-l}$ in $\tilde{\mf{x}}_{i,\tilde m}$ and $\tilde{e}^{(d)}_{i,\tilde m}$ with its $i.i.d.$ copy. We have $\tilde{e}^{(d),(i-l)}_{i,\tilde m} = \tilde{e}^{(d)}_{i,\tilde m}$ for $l\leq \tilde m$,  following from the definition of $\tilde{e}^{(d)}_{i,\tilde m}$. By Jensen's inequality, for $l \leq \tilde m$, 
 \begin{align}
   \Big\|\sum_{i = \lf nb_n \rf + 1}^{n -\lf nb_n\rf } \proj_{i-l} \left (\tilde{\mf{x}}_{i,\tilde m} \tilde{e}^{(d)}_{i,\tilde m}\right) \Big\|^2 &= \sum_{i = \lf nb_n \rf + 1}^{n -\lf nb_n\rf } \left\| \proj_{i-l} \left (\tilde{\mf{x}}_{i,\tilde m} \tilde{e}^{(d)}_{i,\tilde m}\right) \right\|^2 \notag\\ 
   & \leq \sum_{i = \lf nb_n \rf + 1}^{n -\lf nb_n\rf } \left(\lt\|\tilde{\mf{x}}_{i,\tilde m}-\tilde{\mf{x}}^{(i-l)}_{i,\tilde m}\rt\|_4 \lt\|\tilde{e}^{(d)}_{i,\tilde m}\rt\|_4 +  \lt\|\tilde{\mf{x}}^{(i-l)}_{i,\tilde m}\rt\|_4 \lt\|\tilde{e}^{(d)}_{i,\tilde m} - \tilde{e}^{(d),(i-l)}_{i,\tilde m}\rt\|_4 \right)^2 \notag\\ 
   & = \sum_{i = \lf nb_n \rf + 1}^{n -\lf nb_n\rf } \left(\lt\|\tilde{\mf{x}}_{i,\tilde m}-\tilde{\mf{x}}^{(i-l)}_{i,\tilde m}\rt\|_4 \lt\|\tilde{e}^{(d)}_{i,\tilde m} \rt\|_4 \right)^2  
   = O(n \chi^{2l})\notag.
 \end{align}
 Hence, we have 
 \begin{align}
   \left \| \max_{\lf nb_n \rf + 1 \leq r \leq n- \lf  nb_n \rf }|T_1 | \right \| = O(\sqrt{n}).
   \label{eq:T1}
 \end{align}
  Since $\tilde e_{i,\tilde m}^{(d)}$ is $\FF_{i-\tilde m-1}$ measurable and $\tilde {\mf x}_{i,\tilde m} $ is independent of $\FF_{i-\tilde m-1}$, we have 
 \begin{align}
   T_2 = \sum_{ i=\lf nb_n \rf+1}^r \E(\tilde{\mf{x}}_{i,\tilde m}|\FF_{i-\tilde m-1}) \tilde{e}^{(d)}_{i,\tilde m} 
   =  \sum_{ i=\lf nb_n \rf+1}^r {\bs \mu}_W(t_i) \tilde{e}^{(d)}_{i,\tilde m}.
   \label{eq:T2}
 \end{align}
 Therefore, combining the results from \eqref{eq:mlong1}, \eqref{eq:mlong11}, \eqref{eq:T1T2}, \eqref{eq:T1} and \eqref{eq:T2}, we have 
 \begin{align}
   \left\| \max_{\lf nb_n \rf + 1 \leq r \leq n- \lf  nb_n \rf }\left| \sum_{ i=\lf nb_n \rf+1}^r \mf x_i e_i^{(d)} -  \sum_{i = \lf nb_n \rf + 1}^r {\bs \mu}_W(t_i) \tilde{e}^{(d)}_{i,\tilde m}\right| \right\| = O(\sqrt{n}\tilde m^d + n\chi^{\tilde m} + \sqrt{n}) = O(\sqrt{n}\tilde m^d).
 \end{align}
 Finally, since $\mathcal{P}_{k} \cdot=\mathbb{E}\left(\cdot \mid \mathcal{F}_{k}\right)-\mathbb{E}\left(\cdot \mid \mathcal{F}_{k-1}\right)$ and $e_i^{(d)} - \tilde{e}^{(d)}_{i,\tilde m} = \sum_{j = 0}^{\tilde m} \psi_{j} u_{i-j}$, it follows from Doob's maximal inequality and Burkholder's inequality that 
 \begin{align}
   &\left \| \max_{\lf nb_n \rf + 1 \leq r \leq n- \lf  nb_n \rf }\left|\sum_{ i=\lf nb_n \rf+1}^r {\bs \mu}_W(t_i) (e_i^{(d)} - \tilde{e}^{(d)}_{i,\tilde m})\right| \right\| \notag\\ 
   &\leq C_1 \sum_{l=0}^{\infty}  \sum_{j=0}^{\tilde m} \psi_j  \left \|\sum_{ i=\lf nb_n \rf+1}^{n - \lf nb_n\rf} {\bs \mu}_W(t_i)\proj_{i-l} u_{i-j} \right \|\notag\\ 
   & \leq  C_2 \sqrt{n} \left( \sum_{l=0}^{\tilde m}  \sum_{j=0}^l \psi_j \delta_2(H, l-j,  (-\infty, 1]) + \sum_{l=\tilde m+1}^{\infty}  \sum_{j=0}^{\tilde m} \psi_j \delta_2(H, l-j, (-\infty, 1] ) \right) \notag\\ 
  &= O(\sqrt{n} \tilde m^d)
   \label{eq:mlong2},
 \end{align}
 where $C_1$, $C_2$ are sufficiently large constants. The last inequality of \eqref{eq:mlong2} follows in that for $k < 0$, $\delta_2(H,k, (-\infty, 1]) = 0$, and the big $O$ follows from Lemma 3.2 in \cite{KOKOSZKA199519}.
 \hfill $\Box$



\section*{Acknowledgement} Weichi Wu is the corresponding author and gratefully acknowledges NSFC Young Program (no.
11901337) of China.

\newpage
\begin{center}
   {\Large \bf Supplement to ``Detecting long-range dependence for time-varying  linear models"}
\end{center}

We organize the supplementary material as follows: \cref{SectionKPSS} contains a literature review of KPSS and related test. \cref{sec:imple} provides the implementation details including the selection procedures for the smoothing parameters $m, b_n, \tau_n$ and $\eta_n$ in the bootstrap tests. \cref{sim} reports additional simulations of KPSS and related tests. \cref{sec:covid} analyses the COVID-19 dataset. \cref{hk} displays some details in analyzing Hong Kong circulatory and respiratory data. \cref{sec:algorithm} provides algorithms of KPSS and related tests under the time-varying trend model and  R/S, V/S and K/S-type tests under the time-varying coefficient model. \cref{sec1} gives  the limiting distributions of  R/S, V/S and K/S-type statistics under \cref{nondeg:null} and the proofs of the results in Sections \ref{sec:model} and \ref{Test} of the main article. In \cref{bootstrap}, we justify the proposed bootstrap procedures and offer the proofs of the results in Section \ref{sec:bootstrap} of the main article.\par
 Recall filtration $\FF_i=(\varepsilon_{-\infty},...,\varepsilon_i)$ for $i.i.d.$ random variables $(\varepsilon_{i})_{i\in \mathbb Z}$ and the projection operator $\proj_k \cdot = \E(\cdot|\FF_k) -\E(\cdot|\FF_{k-1})$ for $k \in \mathbb Z$.
Recall that $ e_{i,n}^{(d)} = \sum_{j=0}^{\infty}\psi_j(d) u_{i-j, n}$, $ e_{i,n}^{(d_n)} = \sum_{j=0}^{\infty}\psi_j(d_n) u_{i-j, n}$. For the sake of simplicity, we use $\psi_j$ to represent $\psi_j(d)$ when we discuss the fixed alternatives and $\psi_j(d_n)$ for the theory of the local alternatives. Recall $t_i = i/n$, and that $K^*(x)$ denotes the jackknife equivalent kernel $2 \sqrt{2} K(\sqrt{2}x) - K(x)$.  Let "$\Rightarrow$" denote weak convergence, and "$\leadsto$" denote the convergence of a process. 
Let $0 \times \infty = 0$, $a_n \sim b_n$ denote $\lim_{n \to \infty} a_n/b_n = 1$ for real sequences $a_n$ and $b_n$. For a random variable $X$ and a distribution $G$, $X \sim G$ is denoted by $X$ follows the distribution $G$.
Let $D[0,1]$ be the space of real functions on $[0,1]$ that are right-continuous and have left-hand limits (also named càdlàg functions). In the following proofs, we will omit the index $n$ in $e_{i,n}, \mf x_{i,n}, y_{i,n}, u_{i,n}$ for simplicity.

\appendix
\setcounter{section}{1}
\section{KPSS and related tests}\label{SectionKPSS}
The first test statistic is the KPSS-type statistic $T_n$ defined by \begin{equation}
    T_n = \frac{1}{n(n - 2\lf nb_n\rf)}\sum_{r=\lfloor nb_n \rfloor+1}^{n-\lfloor nb_n\rfloor} \left(\tilde S_{r,n}\right)^2.
\end{equation}
The KPSS test was first introduced by \cite{kwiatkowski1992testing} to test for the unit root in level  and trend stationary series, complementary to the ADF test. 
Besides the unit root problem, KPSS-type statistics have been widely and successfully applied to many important hypothesis testing problems, including testing for long memory against the null hypothesis of short memory, see for instance  \cite{lee1996power}. 
The same statistic was also used for detecting structural changes (see \cite{MacNeill1974} among others) and examining random walk components in functional time series (see \cite{kokoszka2016kpss}).
The exhaustive account of the applications of KPSS-type tests is almost impossible and we have only listed a small fraction here.   
\par 
The second test statistic $Q_n$ is the 
R/S-type statistic defined by 
\begin{align}
 Q_n = \max_{\lf nb_n \rf + 1 \leq k \leq n - \lf nb_n \rf } 
 \tilde S_{k,n} - \min_{\lf nb_n \rf + 1 \leq k \leq n - \lf nb_n \rf } \tilde S_{k,n}.\label{Qn}
\end{align}
The R/S test was first introduced by \cite{hurst1951long}. \cite{lo1989long} proposed a modified R/S test for long memory, which is robust to the short-range dependence of strictly stationary errors under null. Lo's test \cite{lo1989long} has been widely applied in finance, see \cite{cheung1993gold} and many others.

The third test statistic is the V/S-type statistic $M_n$ defined by 
\begin{align}
  M_n = \frac{1}{n(n - 2\lf nb_n\rf)}\left\{\sum_{k=\lf nb_n \rf + 1 }^{n-\lf nb_n \rf } \tilde S_k ^2 - \frac{1}{n - 2\lf nb_n \rf}\left(\sum_{k=\lf nb_n \rf + 1}^{n-\lf nb_n \rf } \tilde S_k \right)^2\right\}.\label{Mn}
\end{align}
The V/S test is proposed by \cite{giraitis2003rescaled}, where the authors found that the V/S test achieved better size and power performance than R/S and KPSS tests when applied to certain financial data.

The fourth test statistic is the K/S-type statistic $G_n$ defined by
\begin{align}
    G_n =  \max_{\lf nb_n \rf + 1 \leq k \leq n - \lf nb_n \rf } 
 \left|\tilde S_k \right|.\label{Gn}
\end{align} 
\cite{lima2004robustness} used the K/S statistic to test for long-range dependence and argued that its behavior under the alternative of long memory hypothesis was similar to that of R/S test.

\section{Implementation details}\label{sec:imple}\label{sel}


   In this section, we discuss the selection of proper smoothing parameters $m, b_n, \tau_n$ and $\eta_n$ for the implementation of the bootstrap tests \cref{algorithm}, \cref{trend_algorithms} and \cref{algorithms}. The criterion for choosing parameters for  \cref{trend_algorithms} is to let $p=1$ in the following schemes.
   
   To select $b_n$,  we adopt the Generalized Cross Validation (GCV) proposed by \cite{craven1978smoothing}. For the estimation of ${\bs \beta}(\cdot)$, we can write $\hat{\mf Y}(b) = \mf Q(b)\mf Y$ for some square matrix $\mf Q$, where $\mf Y = (y_{1,n}, \cdots, y_{n,n})^{\T}$, and $\hat{\mf Y}(b)=(\hat y_{1,n},...,\hat y_{n,n})^\top$ is the estimated value of $\mf Y$ via the bandwidth $b$, i.e., $\hat y_{i,n}=\mf x_{i,n}^\top \hat {\bs \beta}(t_i)$. Then we select $\hat b_n$ by
   \begin{equation}
     \hat{b}_{n}=\underset{b\in [b_L^*,b_U^*]}{\arg \min }\{\operatorname{GCV}(b)\}, \quad \quad  \operatorname{GCV}(b)=\frac{n^{-1}|\mathbf{Y}-\hat{\mathbf{Y}}|^{2}}{[1-\operatorname{tr}\{\mf Q(b)\} / n]^{2}},\label{GCV}
     \end{equation}
where the selection range $[b_L^*,b_U^*]$ are chosen as follows. The theoretical optimum bandwidth $b_n$ for the local linear estimation \eqref{eq:loclin}, as discussed in \cite{zhou2010simultaneous}, is 
   \begin{equation}
b_{n}^{*}=\left[\frac{\phi_{0} \int_{0}^{1} \operatorname{tr}\{{\bs \Sigma}(t)\} \mathrm{d} t}{\mu_{2}^{2} \int_{0}^{1}\left|\boldsymbol{\beta}^{\prime \prime}(t)\right|^{2} \mathrm{~d} t}\right]^{1 / 5} n^{-1 / 5}:=cn^{-1/5},\nonumber
\end{equation}
where $\mu_{2}=\int_{\mathbb{R}} x^{2} K(x) \mathrm{d} x $ and $ \phi_{0}=\int_{\mathbb{R}} K^{2}(x) \mathrm{d} x$. Let $b_n =  n^{-1/5}$, then we obtain the pilot estimator $\tilde{\bs \beta}^{\prime}(t)$ via the local linear estimation \eqref{eq:loclin}. Next letting $m = \lf n^{4/15} \rf$, $\tau_n = n^{-5/29} $, we obtain
the pilot estimator of $\bs \Sigma(t)$ via the difference-based approach in \cref{sec:diff}.
Thus for Epanechnikov kernel, the lower and upper bound of $b_n^{*}$ is given by 
\begin{align}
b_{L}^{*} =\hat  c n^{-1 / 4}, \quad b_{U}^{*} = \hat c n^{-1 / 6},\quad \hat c = \left[\frac{15 \sum_{i=1}^n \operatorname{tr}\{\hat{\bs \Sigma}(i /n)\}}{\sum_{i=\nb + 2}^{n - \nb}\left| \tilde {\bs \beta}^{\prime}(t_i)-\tilde {\bs \beta}^{\prime}(t_{i-1})\right|^{2} }\right]^{1 / 5}.
\end{align}
For the choice of $m$ and $\tau_n$ for estimating $\bs \Sigma(t)$ and $\sigma_H(t)$ in  \cref{sec:diff},  as a rule of thumb, we can simply choose $m^* = \lf n^{4/15} \rf$,$\tau_n^* =  n^{-5/29}$  under which \cref{ass:lrv} holds. We refer to the results of   \cite{lrvunpublish}. 
For refinement, we recommend the following extended minimum volatility (MV) method as proposed in Chapter 9 of \cite{politis1999subsampling} which works quite well in our empirical studies. The MV method has the advantage of robustness under complex dependence structures and does not depend on any parametric assumptions of the time series. To be concrete, we first propose a grid of possible block sizes and bandwidths $\{m_1, m_2,\cdots, m_{M_1}\}$, $\{\tau_1, \tau_2,\cdots, \tau_{M_2}\}$. Define the sample variance $s^2_{m_i,\tau_j}(t)$ of the bootstrap statistics as 
\begin{align}
    s^2_{m_i,\tau_j}(t) = \frac{1}{99} \sum_{i=1}^{100} \lt(\tilde T_{n, (i)} - \bar{\tilde T}_{n}\rt)^2,\nonumber
\end{align}
where $\tilde T_{n, (1)},..., \tilde T_{n,(100)}$ are the bootstrap statistics calculated  from 100 iterations of \cref{algorithm} with parameters $b_n$, $m_i$ and $\tau_j$, and $\bar{\tilde T}_{n} = \sum_{i=1}^{100} \tilde T_{n, (i)}/100. $
Then calculate 
   \begin{align}
    MV(i,j):= \underset{1 \leq k \leq n}{\max} \mathrm{SE}\lt\{\cup_{r=-1}^{1}\{s^2_{m_{i}, \tau_{j+r}}(t_k)\} \cup \cup_{r=-1}^{1}\{s^2_{m_{i+r}, \tau_{j}}(t_k)\}\rt\},\nonumber
    \end{align} where $\mathrm{SE}$ stands for the standard deviation, i.e. the maximand is
    \begin{align}
    \frac{1}{4}\left\{\sum_{r=-1,1}\left(s^2_{m_i,\tau_{j+r}}(t_k)-\overline{s^2}_{i,j}(t_k) \right)^2+\sum_{r=-1,1}\left(s^2_{m_{i+r},\tau_{j}}(t_k)-\overline{s^2}_{i,j}(t_k)\right)^2+\left(s^2_{m_i,\tau_{j}}(t_k)-\overline{s^2}_{i,j}(t_k) \right)^2\right\}^{1/2},
    \nonumber
    \end{align}
    where \begin{align}
        \overline{s^2}_{i,j}(t) = \frac{1}{5}\left(\sum_{r=-1,1}s^2_{m_i,\tau_{j+r}}(t) +\sum_{r=-1,1}s^2_{m_{i+r} ,\tau_{j}}(t)+s^2_{m_i,\tau_j}(t) \right).\nonumber
    \end{align} Then we select the pair $(m_{i^*},\tau_{j^*})$ where $(i^*,j^*)$ minimizes $MV(i,j)$. Finally, for $\eta_n$, as a rule of thumb, we recommend setting $\eta_n = b_n$, which works reasonably well in our Monte Carlo experiments. The choice of $\eta_n$ can be also refined by MV methods. Specifically, we can first propose a grid of possible bandwidths $\{\eta_1, \cdots, \eta_{M_3}\}$. Denoted by $\hat{\mf M}_{\eta_{i}}(t)$ the estimated covariance matrix via \eqref{estiM} using $\eta_i$,   $i = 1, 2 \cdots, M_3$,  and select  $\eta=\eta_{j^*}$ where $j^*$ is the  minimizer of the following criterion $V^\diamond(i)$,
    \begin{align}
     V^\diamond(i)=   \underset{1 \leq k \leq n}{\max}  \sum_{r = -2}^2\left| \hat{\mf M}_{\eta_{i+r}}(t_k)  -  \bar{ \hat{\mf M}}_{\eta_{i}}(t_k) \right|^2,\nonumber
    \end{align}
    where $\bar{ \hat{\mf M}}_{\eta_{i}}(t_k) = \sum_{r = -2}^2 \hat{\mf M}_{\eta_{i+r}}(t_k)/5$.

\section{Additional Simulations}\label{sim}

\subsection{Simulation results of the independent model \ref{M0}}
This subsection contains the simulation results on the sensibility of simulated sizes on sample sizes of model \ref{M0}, \ref{M1} and \ref{M2}(see \cref{tb:Tn}) and those of independent model \ref{M0}, including simulated sizes (see \cref{tb:m0size1}) and powers (see \cref{fig:power1}) with the selection procedure described in \cref{sel} in the main article.

\begin{table}[!ht]
\centering
\setlength \tabcolsep{4pt}
\small
\begin{tabular}{lrrrrrrrrrrrrrrrrr}
\hline
$n=$ &  \multicolumn{2}{l}{1000}& & & & &  &  &  &\multicolumn{2}{l}{1500}& & & &  & \\ 
\cline{2-9} \cline{11-18}
      & \multicolumn{2}{l}{KPSS} & \multicolumn{2}{l}{R/S} & \multicolumn{2}{l}{V/S} & \multicolumn{2}{l}{K/S}&   & \multicolumn{2}{l}{KPSS} & \multicolumn{2}{l}{R/S} & \multicolumn{2}{l}{V/S} & \multicolumn{2}{l}{K/S} \\
\cline{2-3} \cline{4-5}\cline{6-7}\cline{8-9}
\cline{11-12} \cline{13-14}\cline{15-16}\cline{17-18}
Model & 5\%         & 10\%       & 5\%        & 10\%      & 5\%        & 10\%      & 5\%        & 10\%     
& $\quad$  & 5\%         & 10\%       & 5\%        & 10\%      & 5\%        & 10\%      & 5\%        & 10\%      \\
 \hline
\ref{M0} & 5.8 & 11.5 & 6.9 & 10.5 & 5.8 & 10.6 & 4.2 & 7.9 &   & 6.0 & 11.3 & 4.7 & 9.0 & 5.8 & 9.2 & 4.6 & 10.0 \\ 
  \ref{M1} & 4.6 & 9.3 & 3.9 & 7.4 & 5.1 & 9.4 & 4.3 & 9.2 &   & 4.9 & 10.5 & 5.4 & 10.7 & 4.6 & 9.6 & 4.6 & 9.2 \\ 
  \ref{M2} & 4.7 & 10.0 & 5.6 & 9.9 & 5.1 & 9.8 & 5.4 & 9.9 &   & 5.0 & 10.2 & 4.9 & 10.0 & 4.7 & 9.9 & 4.2 & 9.3 \\ 
  \hline
\end{tabular}
\caption{Simulated Type I errors (in \%) of KPSS, R/S, V/S and K/S-type tests for model \ref{M0}, \ref{M1} and \ref{M2}. The bandwidths $m$ and $\tau_n$ are determined by MV selection. The bandwidth $b_n$ is selected by GCV.}
\label{tb:Tn}
\end{table}

\begin{table}[!ht]
  \centering
  \begin{tabular}{lrrrrrrrr}
    \hline
     & \multicolumn{2}{c}{KPSS} & \multicolumn{2}{c}{R/S} & \multicolumn{2}{c}{V/S} & \multicolumn{2}{c}{K/S} \\
\cmidrule(r){2-3} \cmidrule(r){4-5}\cmidrule(r){6-7}\cmidrule(r){8-9}
$b_n$ & 5\%         & 10\%       & 5\%        & 10\%      & 5\%        & 10\%      & 5\%        & 10\%      \\
\hline
  0.15 & 4.6 & 9.0 & 5.8 & 10.5 & 4.9 & 9.3 & 4.2 & 8.9 \\ 
  0.175 & 4.4 & 10.1 & 5.5 & 9.6 & 5.8 & 11.1 & 5.5 & 9.7 \\ 
  0.2 & 5.7 & 10.1 & 4.8 & 9.6 & 5.5 & 11.0 & 5.8 & 10.6 \\ 
  0.225 & 5.2 & 10.5 & 5.8 & 10.9 & 5.4 & 8.8 & 4.5 & 10.0 \\ 
     \hline
  \end{tabular}
  \caption{Simulated sizes (in \%) of KPSS, R/S, V/S and K/S tests for model \ref{M0}. The bandwidths $m$ and $\tau_n$ are determined by MV selection.}
     \label{tb:m0size1}
  \end{table}

\begin{figure}[!ht]
  \centering
  \includegraphics[width= 0.45\linewidth]{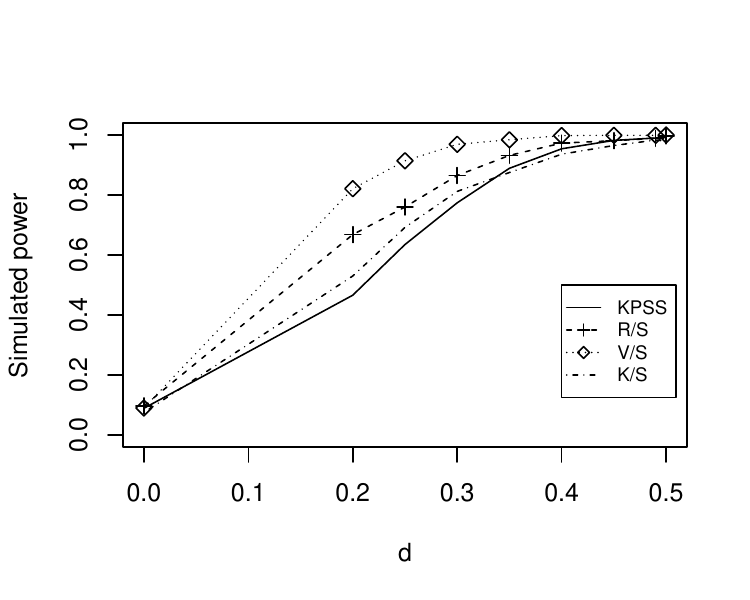}
  \includegraphics[width= 0.45\linewidth]{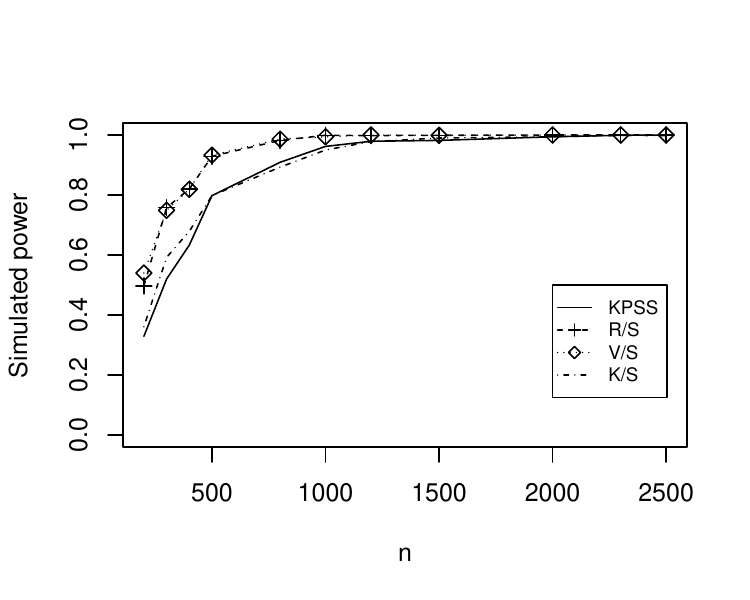}
  \caption{Simulated powers of KPSS and related tests for \ref{M0} with nominal level $0.1$. Left: $n=1500$ and $d$ increases from $0$ to $0.5$; Right: $d = 0.4$ and the sample size $n$ increases from $200$ to $2500$.}
  \label{fig:power1}
\end{figure}

\subsection{Simulation results with heavy-tail innovations}\label{heavy}
This subsection contains the simulation results on the sensibility of simulated sizes on sample sizes of model  \ref{M1} with heavy-tail $i.i.d.$ innovations $\varepsilon_i \sim t_8$.

\begin{table}[ht]
\centering
\ \begin{tabular}{lrrrrrrrr}
    \hline
     & \multicolumn{2}{c}{KPSS} & \multicolumn{2}{c}{R/S} & \multicolumn{2}{c}{V/S} & \multicolumn{2}{c}{K/S} \\
\cmidrule(r){2-3} \cmidrule(r){4-5}\cmidrule(r){6-7}\cmidrule(r){8-9}
$b_n$ & 5\%         & 10\%       & 5\%        & 10\%      & 5\%        & 10\%      & 5\%        & 10\%      \\
\hline
  0.15 & 5.1 & 10.6 & 5.0 & 10.1 & 5.0 & 9.0 & 4.5 & 9.7 \\ 
  0.175 & 4.9 & 10.1 & 4.4 & 8.5 & 5.6 & 9.9 & 4.7 & 9.7 \\ 
  0.2 & 5.0 & 9.9 & 5.0 & 9.3 & 6.1 & 10.4 & 5.2 & 9.6 \\ 
  0.225 & 5.5 & 10.4 & 5.2 & 10.3 & 6.4 & 12.2 & 4.8 & 10.9 \\ 
   \hline
\end{tabular}
\caption{Simulated sizes (in \%) of KPSS, R/S, V/S and K/S tests for model \ref{M1} with $i.i.d.$ $t_8$ innovations $\varepsilon_i$. The bandwidths $m$ and $\tau_n$ are determined by MV selection.}
\end{table}

\subsection{Power performance of model \ref{M2}}
 
\cref{fig:power3} depicts the power performance under data generating model \ref{M2}. As shown in the left panel, when the long memory parameter increases, the rejection rates of all the four KPSS and related tests grow to 1. The right panel reports the power performance as the sample size increases. It implies that KPSS and related tests have asymptotic power 1. 

\begin{figure}[!ht]
  \centering
  \includegraphics[width= 0.45\linewidth]{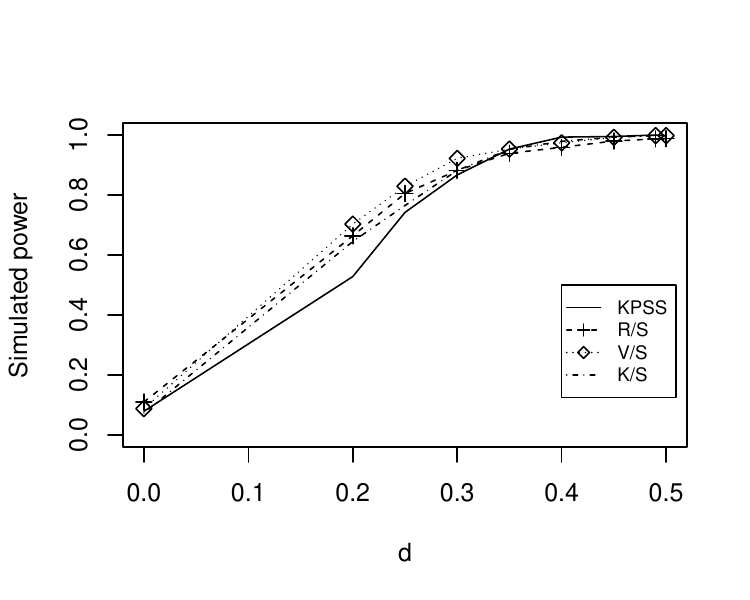}
  \includegraphics[width= 0.45\linewidth]{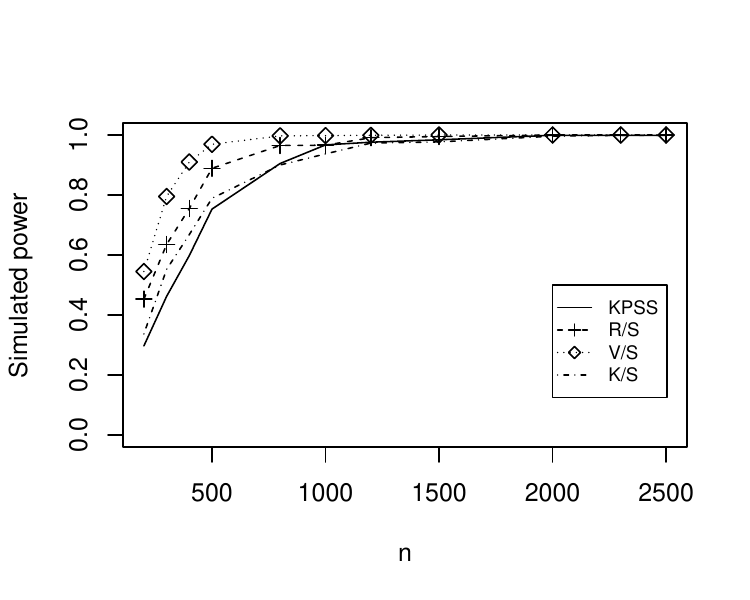}
  \caption{Simulated powers of KPSS and related tests for \ref{M2} with nominal level $0.1$. Left: $n=2500$ and $d$ increases from $0$ to $0.5$; Right: $d = 0.4$ and the sample size $n$ increases from $200$ to $3000$.}
  \label{fig:power3}
\end{figure}

\newpage

\subsection{Simulation results of time-varying \texorpdfstring{$d$}{d}}\label{tvd}

 Although our theory is established for $d$ as a constant, we examine numerically the  power performance of proposed tests as functions of $F=\int_{0}^{1} d(u) d u$.  In particular, we consider another configuration of $d$, i.e.
\begin{align}
   d_2(t) = 0.35 + 0.1 \cos(2 \pi t)  \quad t \in [0,1].
\end{align}
In \cref{fig:power4}, KPSS and related tests display good power performance under models \ref{M0}, \ref{M1} and \ref{M2}, in that the rejection rates of all tests grow to 1 as the sample size increases.

\begin{figure}[!ht]
  \centering
   \includegraphics[width= 0.32\linewidth]{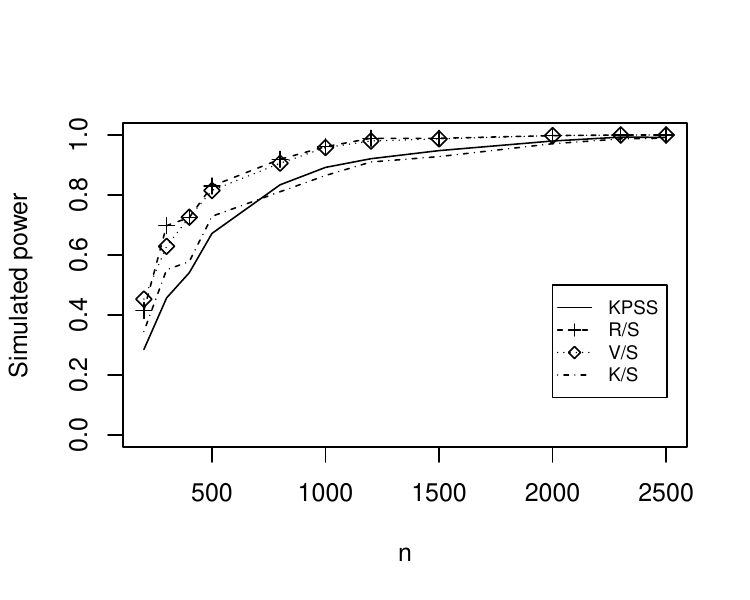}
  \includegraphics[width= 0.32\linewidth]{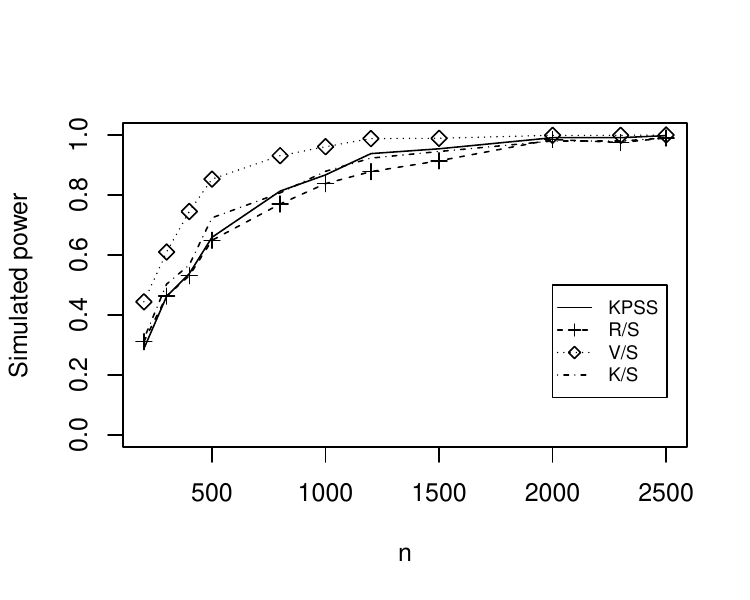}
  \includegraphics[width= 0.32\linewidth]{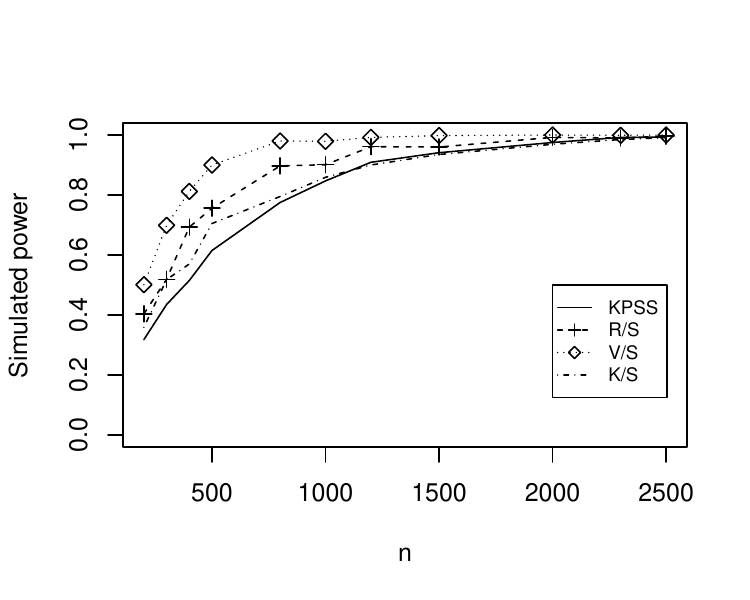}
  \caption{Simulated rejection rates for model \ref{M0}, \ref{M1} and \ref{M2} under the alternative  $d_2$, nominal level 0.1, starting from n = 200.}
  \label{fig:power4}
\end{figure}

\section{Analysis of the COVID-19 infection curve}\label{sec:covid}

We investigate the time series of the cumulative confirmed cases and deaths of COVID-19 in Japan and Ireland, all in log-scale. For each series, we consider the sub-series from the date when its number first exceeds $500$ to 10/06/2021. Our data is obtained from the European Centre for Disease Prevention and Control (ECDC), where the confirmed cases and deaths are updated daily.  The cumulative confirmed cases and deaths of COVID-19  has been modeled by a piecewise linear trend model in \cite{jiang2020time}. We consider the time-varying trend model \eqref{eq:long_memory1} and apply \cref{trend_algorithms} to testing whether the series of log cumulative confirmed cases and deaths of COVID-19 in Japan and Ireland are LRD.
We test for long memory in the two series of each country using the four KPSS and related tests with the critical values generated by $5000$ times of bootstrap.  For the smoothing parameters, we apply  MV criterion  to select $m$ between  $\lf \frac{6}{7} n^{4/15} \rf $ and $\lf \frac{12}{7} n^{4/15} \rf$ and $\tau_n$ between $\lf \frac{26}{29} n^{-5/29}\rf$ and $\lf \frac{34}{29} n^{-5/29}\rf$, where $n$ is the length of the time series. For the cumulative confirmed cases of Japan with $n=577$, $m$ is selected as $4$ for KPSS, R/S, V/S and K/S-type tests and $\tau_n$'s are chosen as $0.350, 0.350, 0.300, 0.350$, respectively. For the cumulative confirmed cases of Ireland with $n=567$, $m$ is selected as $4$ for KPSS, R/S, V/S and K/S-type tests and $\tau_n$'s are chosen as $0.350, 0.350, 0.300, 0.350$, respectively. For the cumulative confirmed deaths  of Japan with $n=524$, $m$ is selected as $4$ and $\tau_n$'s are chosen as $0.355$ for four KPSS and related tests. For the cumulative confirmed deaths of Ireland with $n=538$, $m$ is selected as $4$ for KPSS, and V/S-type tests, $9$ for R/S and K/S -type tests and $\tau_n$'s are chosen as $0.353, 0.303, 0.303, 0.353$, respectively.
By GCV criterion, we select $b_n$ as $0.096$, $0.085$ for cumulative confirmed cases and deaths of Japan, respectively, and $0.090$, $0.088$ for those of Ireland.  

\begin{figure}
  \centering
  \includegraphics[width = \linewidth]{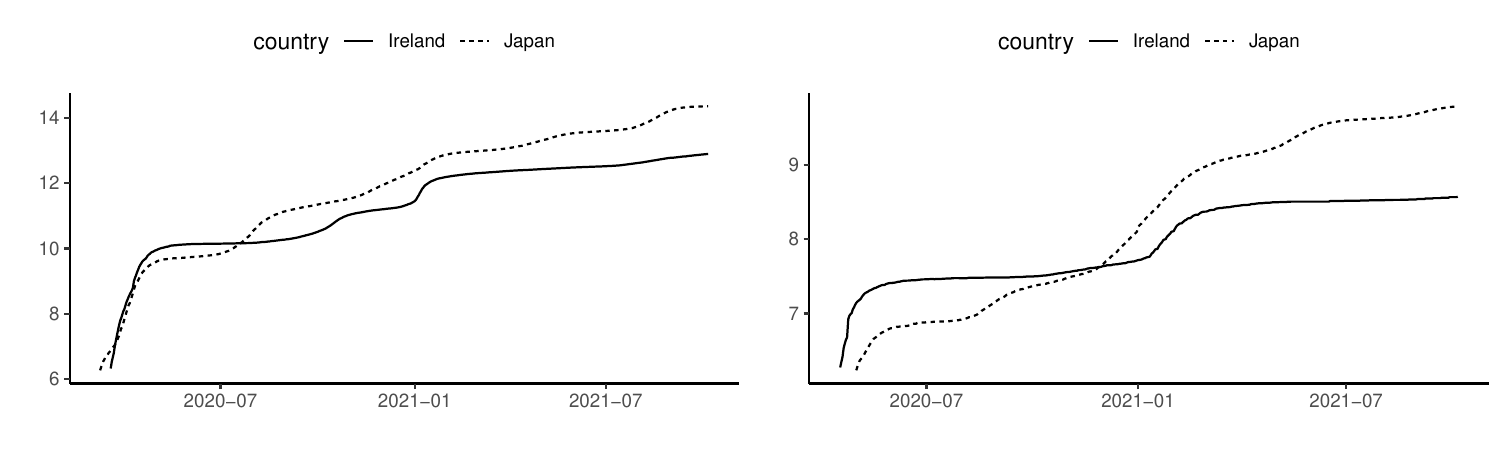}
  \caption{Cumulative confirmed cases (left)  and deaths (right) in log-scale of COVID-19 in Japan and Ireland}
  \label{Ireland_cum}
\end{figure}

Figure \ref{Ireland_cum} displays the time series of cumulative confirmed cases and deaths of Japan and Ireland in log-scale, respectively. The $p$-values of KPSS and related tests are  shown in \cref{tb:covid}. 
For the cumulative confirmed cases series, all four tests reject the null hypothesis at the significance of $0.05$, which indicates significant long-range dependence in the time series of log cumulative confirmed cases of COVID-19 in both countries. On the other hand, all the $p$-values of the four KPSS and related tests for the cumulative confirmed deaths series of COVID-19 exceed 0.05, which fails to reject that the series of log cumulative confirmed deaths of COVID-19 is SRD in either Japan or Ireland. 

\begin{table}[!ht]
\begin{minipage}[b]{.5\linewidth}
  \centering
  \begin{tabular}{c|c|c|c|c}
    \hline
    &KPSS&R/S&V/S&K/S\\ 
    \hline
     cases &0.0012&0.0096&$2\times 10^{-4}$& 0.0022\\
     deaths &0.4332&1&0.9996&0.8244\\
        \hline
  \end{tabular}
\end{minipage}
 \begin{minipage}[b]{.5\linewidth}
  \centering
  \begin{tabular}{c|c|c|c|c}
    \hline
    &KPSS&R/S&V/S&K/S\\ 
    \hline
     cases &0.0198&0.0276&0.0042&0.0032\\
     deaths &0.4524& 1&0.7986&0.998\\
        \hline
  \end{tabular}
 \end{minipage}
   \caption{ $p$-values of KPSS and related tests for the  cumulative confirmed cases and deaths in Japan (left panel) and Ireland (right panel).}
  \label{tb:covid}
\end{table}

\section{Details in analyzing Hong Kong circulatory and respiratory data}\label{hk}
\cref{fig:hk_data} shows the sample path of the covariates ($\text{SO}_2$, $\text{NO}_2$, dust) and the response (the total number of the hospital admissions) in model \eqref{eq:hk_model}. \cref{tb:hk_par} summarizes the smoothing parameters selected in KPSS and related tests when testing for long memory in the series of $\text{SO}_2$, $\text{NO}_2$, dust and total number of hospital admissions modeled by \eqref{eq:long_memory1} and by  \eqref{eq:hk_model}.

 \begin{figure}[!ht]
   \centering
    \includegraphics[width = 0.9\linewidth]{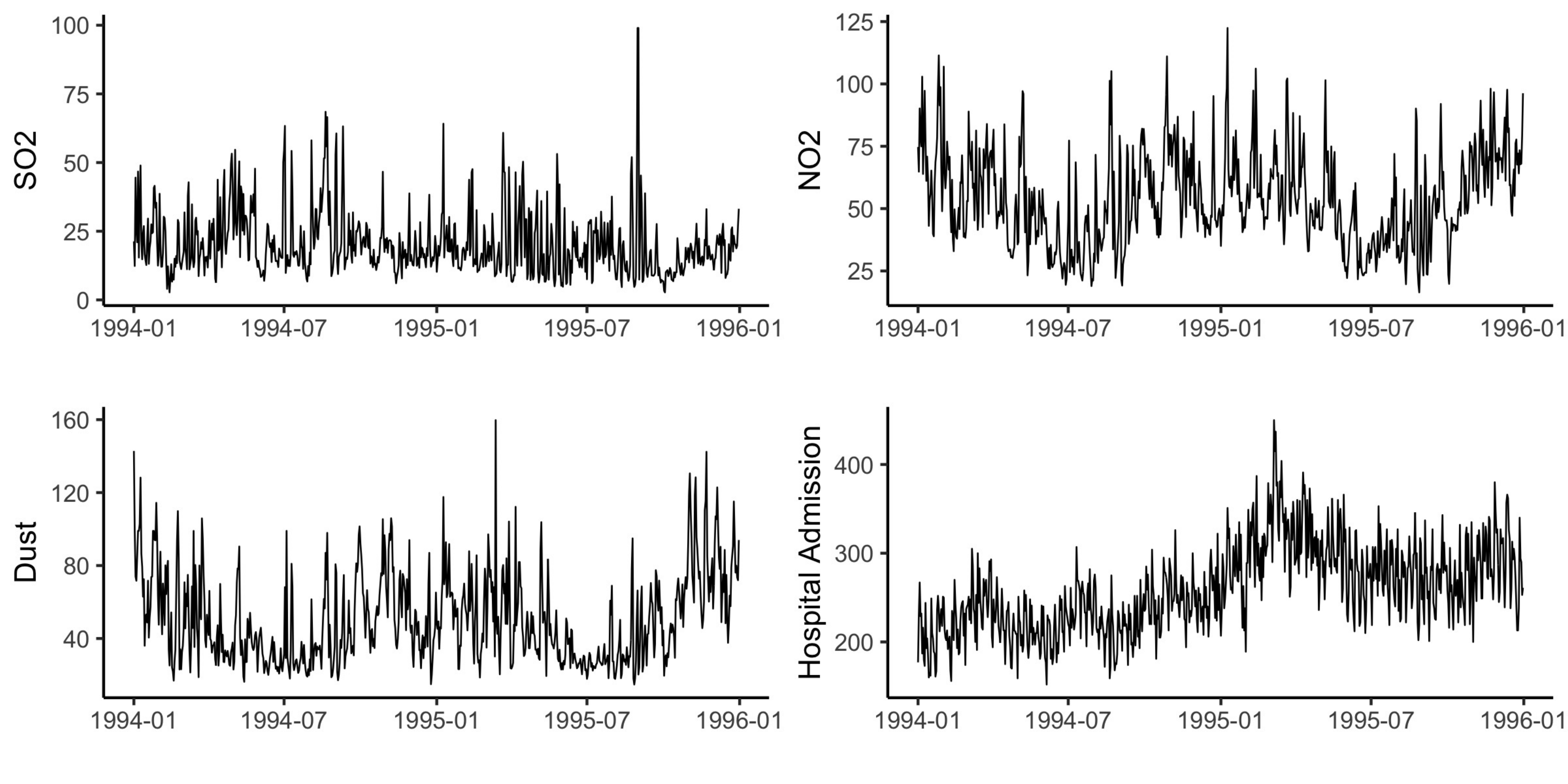}
    \caption{Sample paths of the time series of $\text{SO}_2$, $\text{NO}_2$, dust and the totla number of hospital admissions for the Hong Kong circulatory and respiratory data}
    \label{fig:hk_data}
 \end{figure}
 \begin{table}[ht]
 \centering
 \begin{tabular}{c|ccccc}
  \hline
  &  & KPSS & R/S & V/S & K/S\\
   \hline
  &  $b_n$ &($m$, $\tau_n$)  & ($m$, $\tau_n$) &($m$, $\tau_n$)& ($m$, $\tau_n$)\\ 
   \hline
  $\text{SO}_2$ & 0.250& (16, 0.338) & (16, 0.338) & (16, 0.338) & (16, 0.338) \\ 
  $\text{NO}_2$ & 0.149&  (11, 0.288) & (11, 0.338) & (11, 0.338) & (11, 0.288) \\ 
   Dust & 0.144&  (7, 0.288) & (6, 0.338) & (6, 0.338) & (7, 0.288) \\ 
  model \eqref{eq:long_memory1}&0.138 & (6, 0.338) & (6, 0.338) & (6, 0.338) & (6, 0.338) \\ 
  model \eqref{eq:hk_model} & 0.181& (6, 0.338) & (11, 0.288) & (9, 0.288) & (9, 0.338) \\
 \hline
 \end{tabular}
 \caption{Selected smoothing parameters of KPSS and related tests for $\text{SO}_2$, $\text{NO}_2$, dust and daily total number of hospital admissions modeled by \eqref{eq:long_memory1} and by  \eqref{eq:hk_model}, respectively.}
 \label{tb:hk_par}
 \end{table}

\section{Bootstrap Algorithms} \label{sec:algorithm}
\cref{trend_algorithms} presents the algorithms of KPSS and related tests for the time-varying trend model without time series covariates.  \cref{algorithms} presents the algorithms of R/S, V/S and K/S-type tests for the time-varying coefficient model, and  \cref{cor:bootstraps} investigates the limiting distributions of bootstrap statistics in R/S, V/S and K/S-type tests under null, fixed and the local alternatives. 
\begin{algorithm}
  \caption{The bootstrap procedure for KPSS and related tests for the time-varying trend model}
  \begin{enumerate}
        \item Select the window size $m$ and bandwidth $b_n, \tau_n$, according to the methods in \cref{sel}.
        \item Calculate $\tilde{e}_i = y_{i}  -
        \tilde \beta_1(t_i), i = 1, 2, \cdots, n$,  where $\tilde {\beta}_1$ is obtained by local linear regression \eqref{eq:loclin} with $p=1$ and jackknife correction \eqref{eq:jack}.  Then, compute  the KPSS-type statistic $T_n$ \eqref{eq:KPSS}, R/S-type statistic $Q_n$ in \eqref{Qn},  V/S-type statistic $M_n$ in \eqref{Mn},  K/S-type statistic $G_n$ in \eqref{Gn}.
        \item Calculate $\hat \sigma_{H}^2(t)$ using the estimator in (4.7) of Section 4.2 of \cite{dette2019detecting}.
        \item Generate B (say 2000) $i.i.d.$ copies of $N(0,1)$ variables $ V^{(r)}_i$, for $1 \leq r \leq B$, then calculate
        \begin{align}
          \tilde G^{(r)}_k= \sum_{i=\lfloor nb_n \rfloor+1}^k \hat \sigma_H (t_i) V^{(r)}_i - \frac{1}{n b_n}\sum_{i=\lfloor nb_n \rfloor+1}^k \sum_{j=1}^n \hat \sigma_H (t_j) V^{(r)}_j K_{b_n}(t_j - t_i).  \label{eq:G_k_trend}
        \end{align}
        and the bootstrap version of the KPSS-type statistic \eqref{eq:KPSS},
     \begin{align}\label{eq:bootstrap_trend}
          \tilde T^{(r)}_{n} = \frac{1}{n(n - 2\lf nb_n\rf)}\sum_{s=\lf nb_n \rf + 1}^{ n - \lf nb_n\rf} \left(\sum_{k = \lf nb_n \rf + 1}^s \tilde G^{(r)}_k\right)^2,
        \end{align}
         the bootstrap version of the R/S-type statistic,
      \begin{align}
      \widetilde{\mathrm{RS}}^{(r)}_{n} = \max_{\lf nb_n \rf + 1 \leq k \leq n - \lf nb_n \rf }
     \tilde G^{(r)}_k - \min_{\lf nb_n \rf + 1 \leq k \leq n - \lf nb_n \rf } \tilde G^{(r)}_k,
    \end{align}
     the bootstrap version of the V/S-type statistic,
      \begin{align}
      \widetilde{\mathrm{VS}}^{(r)}_{n} = \frac{1}{n(n - 2\lf nb_n\rf)}\left\{\sum_{k=\lf nb_n \rf + 1 }^{n-\lf nb_n \rf }  (\tilde G^{(r)}_k)^2 - \frac{1}{n - 2\lf nb_n \rf}\left(\sum_{k=\lf nb_n \rf + 1}^{n-\lf nb_n \rf }  \tilde G^{(r)}_k \right)^2\right\}, 
    \end{align}
    the bootstrap version of the K/S-type statistic,
      \begin{align}
      \widetilde{\mathrm{KS}}^{(r)}_{n} = \max_{\lf nb_n \rf + 1 \leq k \leq n - \lf nb_n \rf } 
      \left| \tilde G^{(r)}_k\right|.
    \end{align}
    \item Let $\tilde{T}_{n,(1)} \leq \tilde{T}_{n,(2)} \leq \cdots  \leq \tilde{T}_{n,(B)}$ be the ordered statistics of $\tilde{T}_{n}^{(r)}, r = 1,2,\cdots,B$. Let $\widetilde{\mathrm{RS}}_{n,(1)} \leq \widetilde{\mathrm{RS}}_{n,(2)} \leq \cdots  \leq \widetilde{\mathrm{RS}}_{n,(B)}$ be the ordered statistics of $\{\widetilde{\mathrm{RS}}_{n}^{(r)}\}_{r=1}^B$, $\widetilde{\mathrm{VS}}_{n,(1)} \leq \widetilde{\mathrm{VS}}_{n,(2)} \leq \cdots  \leq \widetilde{\mathrm{VS}}_{n,(B)}$ be the ordered statistics of $\{\widetilde{\mathrm{VS}}_{n}^{(r)}\}_{r=1}^B$, $\widetilde{\mathrm{KS}}_{n,(1)} \leq \widetilde{\mathrm{KS}}_{n,(2)} \leq \cdots  \leq \widetilde{\mathrm{KS}}_{n,(B)}$ be the ordered statistics of $\{\widetilde{\mathrm{KS}}_{n}^{(r)}\}_{r=1}^B$. Let $B^* = \max\{r: \tilde{T}_{n,(r)} \leq T_n\}$. 
    Let $B_{\mathrm{RS}}^* = \max\{r: \widetilde{\mathrm{RS}}_{n,(r)} \leq Q_n\}$, $B_{\mathrm{VS}}^* = \max\{r: \widetilde{\mathrm{VS}}_{n,(r)} \leq M_n\}$, $B_{\mathrm{KS}}^* = \max\{r: \widetilde{\mathrm{KS}}_{n,(r)} \leq G_n\}$. Then the $p$-value of KPSS-type test is $1-B^*/B$, the $p$-value of the R/S-type test is $1-B_{\mathrm{RS}}^*/B$, the $p$-value of the V/S-type test is $1-B_{\mathrm{VS}}^*/B$, and the $p$-value of the K/S-type test is $1-B_{\mathrm{KS}}^*/B$. Reject $H_0$ at the level of $\alpha$ for each type of test if  its $p$-value is smaller than $\alpha$.
    \end{enumerate}
    \label{trend_algorithms}
  \end{algorithm}
  \newpage

\begin{algorithm}
  \caption{The bootstrap procedure of R/S, V/S, K/S-type tests for time-varying coefficient models}
  \begin{enumerate}
    \item Select the window size $m$ and bandwidth $b_n, \tau_n$, according to the methods in \cref{sel}.
    \item Calculate $\tilde{e}_i = y_{i}  -\mathbf{x}_i^{\T}\tilde{{\bs \beta}}(t_i), i = 1, 2, \cdots, n$, where $\tilde{\bs \beta}$ is obtained using local linear regression \eqref{eq:loclin} and jackknife correction \eqref{eq:jack}. Then, compute R/S-type statistic $Q_n$ in \eqref{Qn},  V/S-type statistic $M_n$ in \eqref{Mn},  K/S-type statistic $G_n$ in \eqref{Gn}.
    \item Calculate $\hat{\mf{M}}(t)$ and $\hat{\bs \Sigma}(t)$ defined in \eqref{estiM} and \eqref{eq:diff_correct} of  \cref{sec:diff}, respectively.
    \item Generate B (say 2000) $i.i.d.$ copies of $N(\mf 0, \mf I_p)$ vectors $\mf V^{(r)}_i=(V^{(r)}_{i,1},...,V^{(r)}_{i,p})^\top$, for $1 \leq r \leq B$, then calculate (notice that $\hat \sigma^2_{H}(t)=(\hat {\bs \Sigma}(t))_{1,1} $)
    \begin{align}
      \tilde G^{(r)}_k=-\sum_{j=1}^n\left(\frac{1}{nb_n}\sum_{ i=\lf nb_n \rf+1 }^k \mf x_{i,n}^{\T} \hat{\mf{M}}^{-1}(t_i)  K_{b_n}^*(t_i - t_j)\right) \hat {\bs \Sigma}^{1/2}(t_j)\mf{V}^{(r)}_j + \sum_{ i=\lf nb_n \rf+1}^k \hat \sigma_{H}(t_i)V^{(r)}_{i,1},
    \end{align}
            and the bootstrap version of the R/S-type statistic,
      \begin{align}
      \widetilde{\mathrm{RS}}^{(r)}_{n} = \max_{\lf nb_n \rf + 1 \leq k \leq n - \lf nb_n \rf }
     \tilde G^{(r)}_k - \min_{\lf nb_n \rf + 1 \leq k \leq n - \lf nb_n \rf } \tilde G^{(r)}_k,
    \end{align}
    the bootstrap version of the V/S-type statistic,
      \begin{align}
      \widetilde{\mathrm{VS}}^{(r)}_{n} = \frac{1}{n(n - 2\lf nb_n\rf)}\left\{\sum_{k=\lf nb_n \rf + 1 }^{n-\lf nb_n \rf }  (\tilde G^{(r)}_k)^2 - \frac{1}{n - 2\lf nb_n \rf}\left(\sum_{k=\lf nb_n \rf + 1}^{n-\lf nb_n \rf }  \tilde G^{(r)}_k \right)^2\right\}, 
    \end{align}
    the bootstrap version of the K/S-type statistic,
      \begin{align}
      \widetilde{\mathrm{KS}}^{(r)}_{n} = \max_{\lf nb_n \rf + 1 \leq k \leq n - \lf nb_n \rf } 
      \left| \tilde G^{(r)}_k\right|.
    \end{align}
    \item Let $\widetilde{\mathrm{RS}}_{n,(1)} \leq \widetilde{\mathrm{RS}}_{n,(2)} \leq \cdots  \leq \widetilde{\mathrm{RS}}_{n,(B)}$ be the ordered statistics of $\{\widetilde{\mathrm{RS}}_{n}^{(r)}\}_{r=1}^B$, $\widetilde{\mathrm{VS}}_{n,(1)} \leq \widetilde{\mathrm{VS}}_{n,(2)} \leq \cdots  \leq \widetilde{\mathrm{VS}}_{n,(B)}$ be the ordered statistics of $\{\widetilde{\mathrm{VS}}_{n}^{(r)}\}_{r=1}^B$, $\widetilde{\mathrm{KS}}_{n,(1)} \leq \widetilde{\mathrm{KS}}_{n,(2)} \leq \cdots  \leq \widetilde{\mathrm{KS}}_{n,(B)}$ be the ordered statistics of $\{\widetilde{\mathrm{KS}}_{n}^{(r)}\}_{r=1}^B$.
    Let $B_{\mathrm{RS}}^* = \max\{r: \widetilde{\mathrm{RS}}_{n,(r)} \leq Q_n\}$, $B_{\mathrm{VS}}^* = \max\{r: \widetilde{\mathrm{VS}}_{n,(r)} \leq M_n\}$, $B_{\mathrm{KS}}^* = \max\{r: \widetilde{\mathrm{KS}}_{n,(r)} \leq G_n\}$. Then the $p$-value of the R/S-type test is $1-B_{\mathrm{RS}}^*/B$, the $p$-value of the V/S-type test is $1-B_{\mathrm{VS}}^*/B$, and the $p$-value of the K/S-type test is $1-B_{\mathrm{KS}}^*/B$.  Reject $H_0$ at the level of $\alpha$ for each type of test if  its $p$-value is smaller than $\alpha$.
    \end{enumerate}
    \label{algorithms}
 \end{algorithm}

  \begin{theorem}
    \label{cor:bootstraps}
    The bootstrap statistics $\widetilde{\mathrm{RS}}_{n},  \widetilde{\mathrm{VS}}_{n}, \widetilde{\mathrm{KS}}_{n}$ are defined in \cref{algorithms}.   Then, we have the following results \par
     (i) Under the conditions of \cref{thm:bootstrap_null}(i), we have under $H_0$ 
     \begin{align}
       \widetilde{\mathrm{RS}}_{n} /\sqrt{n} \Rightarrow \sup_{0 \leq t \leq 1} U(t) - \inf_{0 \leq t \leq 1} U(t), 
       \quad
           \widetilde{\mathrm{VS}}_{n} \Rightarrow  \int_0^1 U^2(t) dt - \left(\int_{0}^1 U(t)dt\right)^2,
        \quad
         \widetilde{\mathrm{KS}}_{n}/\sqrt{n} \Rightarrow \sup_{0 \leq t \leq 1}| U(t)|,
     \end{align}
     where $U(t)$ is as defined in \cref{thm:null_dist}. \par 
     (ii) For the fixed alternatives, under the conditions of \cref{thm:bootstrapA} (i), 
       we have
       \begin{align}
         &m^{-d} \widetilde{\mathrm{RS}}_{n} /\sqrt{n} \Rightarrow \sup_{0 \leq t \leq 1} \tilde U_d(t) - \inf_{0 \leq t \leq 1} \tilde U_d(t), \\ &
      m^{-2d} \widetilde{\mathrm{VS}}_{n} \Rightarrow  \int_0^1 \tilde U_d^2(t) dt - \left(\int_{0}^1 \tilde U_d(t)dt\right)^2,\quad
       m^{-d} \widetilde{\mathrm{KS}}_{n}/\sqrt{n} \Rightarrow \sup_{0 \leq t \leq 1}| \tilde U_d(t)|,
       \end{align}
       where $\tilde U_d(t)$ is as defined in (i) of \cref{thm:bootstrapA}.\par
       
       (iii) For the local alternatives $d_n = c /\log n$ with some constant  $c>0$, under the conditions of \cref{thm:bootstrapA} (ii), we have
        \begin{align}
         \widetilde{\mathrm{RS}}_{n} /\sqrt{n} \Rightarrow \sup_{0 \leq t \leq 1} \check U_{\alpha}(t) - \inf_{0 \leq t \leq 1} \check U_{\alpha}(t), 
         \quad
             \widetilde{\mathrm{VS}}_{n} \Rightarrow  \int_0^1 \check U_{\alpha}^2(t) dt - \left(\int_{0}^1 \check U_{\alpha}(t)dt\right)^2,
          \quad
           \widetilde{\mathrm{KS}}_{n}/\sqrt{n} \Rightarrow \sup_{0 \leq t \leq 1}| \check U_{\alpha}(t)|,
        \end{align}
        where $\check U_{\alpha}(t)$ is as defined in (ii) of \cref{thm:bootstrapA}.
   \end{theorem}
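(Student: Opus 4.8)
The plan is to derive every assertion of \cref{cor:bootstraps} from the continuous mapping theorem, using the process-level weak convergence that is already contained (implicitly) in the proofs of \cref{thm:bootstrap_null} and \cref{thm:bootstrapA}. The key observation is that, for a fixed bootstrap replication, the KPSS-type bootstrap statistic of \cref{algorithm} and the statistics $\widetilde{\mathrm{RS}}_n$, $\widetilde{\mathrm{VS}}_n$, $\widetilde{\mathrm{KS}}_n$ of \cref{algorithms} are \emph{all} functionals of one and the same partial sum path. Writing $\tilde{\mathbb{S}}_n(t) = \sum_{k=\lf nb_n\rf+1}^{\lf nb_n\rf + \lf (n-2\lf nb_n\rf)t\rf} \tilde G_k^{(r)}$ for $t\in[0,1]$, with $\tilde G_k^{(r)}$ as in \eqref{eq:Gk}, we have $\widetilde{\mathrm{RS}}_n = \sup_{t\in[0,1]}\tilde{\mathbb{S}}_n(t) - \inf_{t\in[0,1]}\tilde{\mathbb{S}}_n(t)$, $\widetilde{\mathrm{KS}}_n = \sup_{t\in[0,1]}|\tilde{\mathbb{S}}_n(t)|$, and $\widetilde{\mathrm{VS}}_n$ equals $\{n(n-2\lf nb_n\rf)\}^{-1}$ times $\sum_t \tilde{\mathbb{S}}_n(t)^2 - (n-2\lf nb_n\rf)^{-1}(\sum_t \tilde{\mathbb{S}}_n(t))^2$, while the KPSS bootstrap statistic is $\{n(n-2\lf nb_n\rf)\}^{-1}\sum_t \tilde{\mathbb{S}}_n(t)^2$. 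Thus the three new statistics are, up to the deterministic normalizations, the $\sup-\inf$ functional, the $\sup|\cdot|$ functional, and the recentred $L^2$ functional evaluated at $\tilde{\mathbb{S}}_n$.

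First I would record that the proofs of \cref{thm:bootstrap_null} and \cref{thm:bootstrapA} in fact establish the functional statements that $n^{-1/2}\tilde{\mathbb{S}}_n(\cdot)$ converges weakly in $D[0,1]$ to $U(\cdot)$ under $H_0$, that $m^{-d}n^{-1/2}\tilde{\mathbb{S}}_n(\cdot)$ converges weakly to $\tilde U_d(\cdot)$ under the fixed alternatives, and that $n^{-1/2}\tilde{\mathbb{S}}_n(\cdot)$ converges weakly to $\check U_\alpha(\cdot)$ under the local alternatives; this path-level input is already needed there, since the limit $\int_0^1 U^2$ claimed for the KPSS bootstrap statistic is itself a functional of the entire path, and the limits $U$, $\tilde U_d$, $\check U_\alpha$ are a.s. continuous. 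Given this, the maps $f\mapsto \sup f - \inf f$ and $f\mapsto \sup|f|$ are continuous on $D[0,1]$ (equipped with $J_1$), and $f\mapsto \int_0^1 f^2-(\int_0^1 f)^2$ is continuous on $D[0,1]$ with respect to the uniform norm; since the limiting processes are a.s. continuous, all three functionals are continuous at a.e.\ realization of the limit. The continuous mapping theorem then yields (i), (ii) and (iii) in turn, after matching normalizations ($\sqrt n$ under $H_0$, $m^d\sqrt n$ under the fixed alternatives, $\sqrt n$ under the local alternatives, exactly as for the KPSS bootstrap statistic in \cref{thm:bootstrapA}) and using $b_n\to0$ so that the trimmed index range $\lf nb_n\rf+1\le k\le n-\lf nb_n\rf$ rescales to all of $[0,1]$. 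For V/S one additionally notes that the two Riemann-type averages $\{n(n-2\lf nb_n\rf)\}^{-1}\sum_t\tilde{\mathbb{S}}_n(t)^2$ and $(n-2\lf nb_n\rf)^{-1}\sum_t\{n^{-1/2}\tilde{\mathbb{S}}_n(t)\}$ converge jointly, together with the limiting path, to $\int_0^1 U^2$ and $\int_0^1 U$ (and analogously with $U$ replaced by $\tilde U_d$ after multiplying by $m^{-2d}$, or by $\check U_\alpha$), which is immediate from uniform convergence of the rescaled path; joint (not merely marginal) convergence holds because both pieces are driven by the single path $\tilde{\mathbb{S}}_n$.

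Since the genuinely analytic ingredient, namely tightness and finite-dimensional convergence of the bootstrap partial sum process, is already supplied by \cref{thm:bootstrap_null} and \cref{thm:bootstrapA}, there is essentially no new difficulty: the remaining work is bookkeeping. The only places to be slightly attentive are (a) carrying the correct normalization factors through each of the three regimes, which is parallel to the KPSS case; and (b) checking that the recentred quadratic functional defining $\widetilde{\mathrm{VS}}_n$ is continuous with respect to the uniform topology on $D[0,1]$ and that its two constituents converge jointly rather than only marginally. I therefore expect the ``hard part'' to be purely notational, and would present the proof by invoking the arguments in the proofs of \cref{thm:bootstrap_null} and \cref{thm:bootstrapA} verbatim to obtain the process convergence of $\tilde{\mathbb{S}}_n$, and then concluding via the continuous mapping theorem applied to the three elementary functionals above; this is why the statement can be given without a separate detailed proof.
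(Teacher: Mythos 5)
Your proposal is correct and coincides with the paper's intended argument: the paper explicitly omits the proof of this theorem, stating that it follows from the process-level convergence established in the proofs of \cref{thm:bootstrap_null} and \cref{thm:bootstrapA} together with the continuous mapping theorem, which is exactly the route you take (including the observation that all four bootstrap statistics are functionals of the single partial sum path and that the functionals are continuous at the a.s.\ continuous limits). Your additional remarks on joint convergence for the V/S functional and on carrying the normalizations through the three regimes are just the bookkeeping the paper leaves implicit.
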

\cref{cor:bootstraps} follows from the proofs of  \cref{thm:bootstrap_null} and \cref{thm:bootstrapA} and continuous mapping theorem. Therefore, for the sake of brevity, we omit its proof. 

\section{Proofs and related results of Sections \ref{sec:model} and \ref{Test}}\label{sec1}

\subsection{Proof of \texorpdfstring{\cref{Prop31}}{Proposition 3.1}}
By Lemma 3.2 of \cite{KOKOSZKA199519}, under \cref{assumptionHp}, we have
\begin{align}
  \delta_p(H^{(d)}, l, (-\infty, 1]) 
 \leq \sum_{k=0}^{l} \psi_k(d) \delta_p(H,l-k, (-\infty, 1])  
 =O\{(1+l)^{d-1}\}.
  \end{align}



\subsection{Limiting distributions of R/S, V/S, and K/S-type statistics}\label{sub:limits}

The limiting  behavior of  R/S, V/S and K/S-type statistics defined in \cref{sec:4} under \cref{nondeg:null} can be derived by  \cref{thm:null_dist}, \cref{thm:alt_approx} and \cref{thm:local} as well as an application of continuous mapping theorem.  Recall the definitions of $U(t)$, $U_d(t)$, $U^{\circ}(t)$ in \cref{thm:null_dist}, \cref{thm:alt_approx} and \cref{thm:local}.

For the R/S-type statistic defined in \eqref{Qn}, under $H_0$, we have
$
  Q_n/\sqrt{n} \Rightarrow \sup_{0 \leq t \leq 1} U(t) - \inf_{0 \leq t \leq 1} U(t).
$
Under fixed alternatives with long-memory parameter $d$, we have
$
Q_n \Gamma(d+1)/n^{d+1/2} \Rightarrow \sup_{0 \leq t \leq 1} U_d(t) - \inf_{0 \leq t \leq 1} U_d(t),
$
and under local alternatives with $d_n = c / \log n$,
$
Q_n /\sqrt{n} \Rightarrow \sup_{0 \leq t \leq 1} U^{\circ}(t) - \inf_{0 \leq t \leq 1} U^{\circ}(t).
$
\par
For the V/S-type statistic defined in \eqref{Mn}, under $H_0$, we have
  $
    M_n \Rightarrow  \int_0^1 U^2(t) dt - \left(\int_{0}^1 U(t)dt\right)^2.
  $
 Under fixed alternatives with long memory parameter $d$, we have
$
    M_n  \Gamma^2(d+1)/n^{2d} \Rightarrow  \int_0^1 U^2_d(t) dt - \left(\int_{0}^1 U_d(t)dt\right)^2,
$
and under local alternatives with $d_n = c / \log n$,
$
M_n \Rightarrow  \int_0^1 U^{\circ,2}(t) dt - \left(\int_{0}^1 U^{\circ}(t)dt\right)^2.
$
\par
For the K/S-type statistic defined in \eqref{Gn}, under $H_0$, we have
$
G_n/\sqrt{n} \Rightarrow \sup_{0 \leq t \leq 1}| U(t)|.
$         
Under fixed alternatives with long-memory parameter $d$, we have
$
G_n \Gamma(d+1)/n^{d+1/2} \Rightarrow \sup_{0 \leq t \leq 1} |U_d(t)|,
$
and under local alternatives with $d_n = c / \log n$,
$
G_n/\sqrt{n} \Rightarrow \sup_{0 \leq t \leq 1}| U^{\circ}(t)|.
$
\subsection{Proof of \texorpdfstring{ \cref{thm:null_dist}}{Theorem 4.1}}\label{sec2}
Before proving \cref{thm:null_dist}, we study the covariance between $\mf x_i$ and $\mf x_{j}e_{j}$. 

\begin{proposition}\label{Propo-4-24-1}
  Let $\bar {\mf x}_i^{\T}=\mf x_i^{\T}-{\bs \mu}_W(t_i)$ be a $p$-dimensional vector with $j_{th}$ entry  $ \bar {x}_{i,j}$. Let $x_{i,l}$ be  $l_{th}$ entry of $\mf x_i$. Then under Assumption \ref{Ass-U} and \ref{Ass-W}, $1\leq l, k\leq p$, we have that
  \begin{align}
  \max_{1 \leq i,j\leq n, 1\leq k,l\leq p}| \E(\bar {x}_{i,l} x_{j,k}e_{j})|=O(\chi^{|i-j|}).
  \end{align}
\end{proposition}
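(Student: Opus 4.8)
\textbf{Proof proposal for Proposition \ref{Propo-4-24-1}.}

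The plan is to exploit the physical-dependence / causal structure of the processes together with the exogeneity condition \ref{A:HW}. Fix indices $i,j$ and coordinates $l,k$. The key observation is that $\bar x_{i,l} = \bar W_l(t_i,\FF_i)$ and $x_{j,k}e_j = W_k(t_j,\FF_j)H(t_j,\FF_j) = U_k(t_j,\FF_j)$ are both measurable with respect to the common filtration generated by $(\varepsilon_s)_{s\le \max(i,j)}$, with the first depending only on $\varepsilon_s$ for $s\le i$ and the second only on $\varepsilon_s$ for $s\le j$. Without loss of generality assume $i < j$ (the case $i=j$ and the case $i>j$ are handled symmetrically, using exogeneity for the $i\ge j$ direction). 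The strategy is a standard coupling/telescoping argument: replace in $U_k(t_j,\FF_j)$ the innovations $\varepsilon_s$ with $s \le i$ by i.i.d.\ copies one block at a time (or all at once, using the "tail" index $m = i$), obtaining a random variable $U_k^{\dagger}$ that is independent of $\bar x_{i,l}$. By construction $\E(\bar x_{i,l} U_k^{\dagger}) = \E(\bar x_{i,l})\,\E(U_k^{\dagger}) = 0$ since $\bar x_{i,l}$ is centered, and crucially $\E(U_k^{\dagger})=0$ by the exogeneity assumption \ref{A:HW} applied to the coupled process (which has the same marginal law). Hence $|\E(\bar x_{i,l}x_{j,k}e_j)| = |\E(\bar x_{i,l}(U_k(t_j,\FF_j) - U_k^{\dagger}))|$.

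Next I would bound this last quantity by Cauchy--Schwarz: it is at most $\|\bar x_{i,l}\|_2 \cdot \|U_k(t_j,\FF_j) - U_k^{\dagger}\|_2$. The first factor is uniformly bounded by \ref{A:W} (finite fourth, hence second, moment of $\mf W$). For the second factor, note that $U_k(t_j,\FF_j)$ and $U_k^{\dagger}$ differ precisely in the innovations with indices $\le i = j - (j-i)$, so the difference is controlled by the sum of physical dependence measures $\sum_{r \ge j-i} \delta_2(U_k, r)$ — more carefully, one uses the coupling inequality $\|U_k(t_j,\FF_j)-U_k^\dagger\|_2 \le \sum_{r=j-i}^{\infty}\delta_2(\mf U, r, [0,1])$ (this is the standard fact that replacing all innovations up to lag $m$ costs $\sum_{r\ge m}\delta$). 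By \ref{A:U_delta}, $\delta_4(\mf U,r)=O(\chi^r)$, so $\delta_2(\mf U,r) \le \delta_4(\mf U,r) = O(\chi^r)$ and the tail sum is $O(\chi^{j-i}) = O(\chi^{|i-j|})$. Since the constants in \ref{A:W} and \ref{A:U_delta} are uniform in $t$ and in the coordinate indices, the bound is uniform over $1\le i,j\le n$ and $1\le k,l\le p$, which gives the claim.

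For the case $i \ge j$: here $\bar x_{i,l}$ depends on innovations up to time $i \ge j$ while $x_{j,k}e_j$ depends on innovations up to time $j$. Now I would couple in the \emph{other} variable: replace in $\bar x_{i,l}=\bar W_l(t_i,\FF_i)$ the innovations indexed $\le j$ by i.i.d.\ copies to get $\bar x_{i,l}^{\dagger}$, independent of $x_{j,k}e_j$ and with the same (centered) marginal law, so $\E(\bar x_{i,l}^{\dagger}\, x_{j,k}e_j) = 0$. Then $|\E(\bar x_{i,l}x_{j,k}e_j)| \le \|\bar x_{i,l}-\bar x_{i,l}^\dagger\|_2 \cdot \|x_{j,k}e_j\|_2 \le \big(\sum_{r\ge i-j}\delta_2(\mf W,r)\big)\cdot \sup_t\|\mf U(t,\FF_0)\|_2 = O(\chi^{|i-j|})$, using \ref{A:W_delta} (and $\delta(\mf W,r)=O(\chi^r)$) together with \ref{A:U}. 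When $i=j$ either argument, or directly Cauchy--Schwarz with the crude bound and then one-step coupling, works; in fact the $i=j$ term need not even be small, but since $\chi^0=1$ the stated bound $O(\chi^{|i-j|})$ trivially accommodates it. The main obstacle is the bookkeeping around the exogeneity condition \ref{A:HW}: one must check that after coupling (which produces a process with the identical finite-dimensional law, just on an enlarged probability space) the conditional-mean-zero property still forces $\E U_k^\dagger = 0$; this is immediate because $\E U_k^\dagger$ depends only on the marginal distribution of $U_k(t_j,\FF_j)$, which is unchanged, and $\E(H(t_j,\FF_j)\mid\mf W(t_j,\FF_j))=0$ gives $\E U_k(t_j,\FF_j) = \E\{W_k(t_j,\FF_j)\E(H(t_j,\FF_j)\mid\mf W(t_j,\FF_j))\}=0$.
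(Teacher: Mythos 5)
Your argument is correct, but it is a genuinely different route from the paper's. The paper proves \cref{Propo-4-24-1} in one line by expanding both $\bar x_{i,l}=\sum_{m}\proj_m\{\bar x_{i,l}\}$ and $x_{j,k}e_j=\sum_m\proj_m\{x_{j,k}e_j\}$ into martingale-difference projections, using orthogonality across $m$ together with \cref{physical} to get $|\E(\bar x_{i,l}x_{j,k}e_j)|\le\sum_{m}\delta_2(\mf W,i-m)\,\delta_2(\mf U,j-m)$; since both projections vanish for $m$ beyond $\min(i,j)$, this convolution of two geometric sequences is $O(\chi^{|i-j|})$ with no case distinction on the sign of $i-j$ and no use of centering or exogeneity. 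You instead decouple by coupling: replace the shared past innovations in whichever factor is "later", use independence of the coupled copy from the other factor plus $\E\bar x_{i,l}=0$ to kill the decoupled term, and then bound the coupling error by the tail sum of physical dependence measures via Cauchy--Schwarz. This is valid — the telescoping coupling inequality you invoke is the standard one, $\delta_2\le\delta_4$ handles the moment index, and the geometric decay in \ref{A:U_delta} and \ref{A:W_delta} gives exactly $O(\chi^{|i-j|})$ — and it has the virtue of being elementary and portable to settings where only coupling bounds are available; the price is the case split $i<j$ versus $i\ge j$ and slightly more bookkeeping. One small remark: your claim that exogeneity \ref{A:HW} is "crucial" for $\E(\bar x_{i,l}U_k^\dagger)=0$ is an unnecessary detour — since $U_k^\dagger$ is independent of $\bar x_{i,l}$ and $\E\bar x_{i,l}=0$, the product expectation vanishes regardless of $\E U_k^\dagger$, so the proposition (like the paper's proof) does not actually need \ref{A:HW} at this point.
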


\begin{proof}[Proof of \cref{Propo-4-24-1}]
  Under \cref{Ass-W}, $\bar {x}_{i,k} = \sum_{m \in \mathbb{Z}}\proj_{m}\{\bar {x}_{i,k}\}$,  $x_{j,k}e_{j} = \sum_{m \in \mathbb{Z}}\proj_{m}\{x_{j,k}e_{j}\}$. Then, with the orthogonality of $\proj_j$
   , we have
    \begin{align}
      |\E(\bar {x}_{i,l} x_{j,k}e_{j})| 
      &= \left| \E\left[\sum_{m \in \mathbb{Z}}\proj_{m}\{\bar {x}_{i, l}\} \proj_{m}\{x_{j,k} e_{j}\}\right]\right|
      \leq \sum_{m \in \mathbb{Z}} \delta_2(\mf W,i-m)\delta_2(\mf U,j - m) 
      = O(\chi^{|i-j|}).
    \end{align}
  The last equality follows from the SRD conditions in Assumptions \ref{Ass-U} and \ref{Ass-W}.
  \end{proof}

  \begin{lemma}\label{lm:scblemma6}
    Under the condition of \cref{thm:null_dist}, we have 
    \begin{align}
      \sup_{t \in \mathcal T} |\mf S_{n,0}(t) - \mf M(t)| = \Op(n^{-1/2}b_n^{-3/4} + b_n^2),
    \end{align}
    where $\mf S_{n,0} = \frac{1}{nb_n}\sum_{i=1}^n \mf x_i \mf x_i^{\T} K_{b_n}(t_i - t).$
  \end{lemma}
  \begin{proof}
    Similar to the proof in Lemma 6 of \cite{zhou2010simultaneous}, under Assumption \ref{A:W_delta}, we have
    \begin{align}
      \|\mf S_{n,0}(t)  - \E \{ \mf S_{n,0}(t)  \}\|_4 = O((nb_n)^{-1/2}).
    \end{align}
   By the chaining argument in Proposition B.1 in Section B.2 in \cite{dette2018change}, we have
   \begin{align}
   \left \|\sup_{t \in \mathcal T}|\mf S_{n,0}(t)  - \E \{ \mf S_{n,0}(t)\} \right \|_4 = O(n^{-1/2}b_n^{-3/4}).
   \end{align}
   Finally, under \ref{A:Mt_smooth} and Assumption \ref{A:K},  we have uniformly for $t \in \mathcal T$, that 
   \begin{align}
     \left| \E \{ \mf S_{n,0}(t) - \mf M(t)\right| & =  \left| \frac{1}{nb_n}\sum_{i=1}^n \mf M(t_i)K_{b_n}(t_i - t) - \mf M(t)\right| \\ 
     & = \left| \frac{1}{nb_n}\sum_{i=1}^n (\mf M(t) + \mf M^{\prime}(t) (t_i - t) + O(b_n^2))K_{b_n}(t_i - t) - \mf M(t)\right|
     & = O((nb_n)^{-1} + b_n^2).
   \end{align}
  \end{proof}
 
\subsubsection{Proof of \texorpdfstring{\cref{thm:null_dist}}{Theorem 4.1}  (i)}

Define $\mathcal T = [b_n ,1-b_n]$.
  Under Assumptions \ref{A:K}, \ref{A:beta}, \ref{Ass-U} and \ref{Ass-W},
  replacing Lemma 6 of \cite{zhou2010simultaneous} by \cref{lm:scblemma6} in the proof of Theorem 3 of \cite{zhou2010simultaneous} yields that
  \begin{align}
      \sup_{t\in \mathcal T}\left|\tilde {\bs \beta}_{b_n}(t)-{\bs \beta}(t)-\sum_{i=1}^n\frac{\mf{M}^{-1}(t)}{nb_n}\mf x_ie_iK^*_{b_n}(t_i-t)\right|=\Op(\rho_n'\chi_n'),
  \end{align}
  where $\rho_{n}^{\prime}=(n b_{n})^{-1 / 2} \log n +b_{n}^{2}$, $\chi^{\prime}_n = n^{-1 / 2} b_{n}^{-3/4}+b^2_{n}$, $K^*_{b_n}(t_i-t)=2 K_{\frac{b_n}{\sqrt 2}}(t_i-t)-K_{b_n}(t_i-t)$. Then, uniformly for $\lf nb_n \rf+1 \leq r\leq n - \lf nb_n \rf$, we have
  \begin{align}
  \sum_{ i=\lf nb_n \rf+1}^r\tilde e_i
  &=-\sum_{j=1}^n\left(\frac{1}{nb_n}\sum_{ i=\lf nb_n \rf+1 }^r\mf x_i^{\T}\mf{M}^{-1}(t_i)K_{b_n}^*\left(t_i-t_j\right)\right)\mf x_je_j+\sum_{ i=\lf nb_n \rf+1}^re_i+\Op(n\rho_n'\chi_n').\label{sumhate}
  \end{align}
  Define the following function $G^*(r)$, $\lf nb_n \rf+1 \leq r\leq n - \lf nb_n \rf$,
  \begin{align}\label{Gstar}
  G^*(r)=-\sum_{j=1}^n\left(\frac{1}{nb_n}\sum_{ i=\lf nb_n \rf+1 }^r{\bs \mu}_W^{\T}(t_i)\mf{M}^{-1}(t_i)K_{b_n}^*\left(t_i-t_j\right)\right)\mf x_je_j+\sum_{ i=\lf nb_n \rf+1}^re_i.
  \end{align}
  Then combining \eqref{sumhate} and \eqref{Gstar},  we have
  \begin{align}
  \max_{\lf nb_n \rf + 1 \leq r \leq n- \lf  nb_n \rf }\left|G^*(r)-\sum_{ i=\lf nb_n \rf+1}^r\tilde e_i\right|
  \leq \max_{\lf nb_n \rf + 1 \leq r \leq n- \lf  nb_n \rf } |\tilde M_r|+\Op(n \rho_n'\chi_n'),
  \label{eq:null_Gstar}
  \end{align}
  where
  \begin{align}
  \tilde M_r=\sum_{j=1}^n\left(\frac{1}{nb_n}\sum_{ i=\lf nb_n \rf+1 }^r\lt(\mf x^{\T}_i-{\bs \mu}_W^{\T}(t_i)\rt)\mf{M}^{-1}(t_i)K_{b_n}^*\left(t_i-t_j\right)\right)\mf x_je_j.
  \end{align}\par
  We shall show (i) the bound for $\max_{\lf nb_n \rf + 1 \leq r \leq n- \lf  nb_n \rf}| \tilde{M}_r|$, (ii) the asymptotic behavior of the process $G^{*}(r)$. We break the proof into several steps. Step 1 derives the maximum bound for $| \tilde{M}_r|$. The Gaussian approximation result of $G^{*}(r)$ is established in Step 2. In Step 3, we obtain the limiting distribution of $G^{*}(\lf nt \rf)/\sqrt{n}$ and its convergence with Skorohod topology on $D[0,1]$.
  
 \textbf{Step 1:} 
  We shall show that 
  \begin{align}
  \max_{\lf nb_n \rf + 1 \leq r \leq n- \lf  nb_n \rf}| \tilde{M}_r|=\Op(b_n^{-1}).\label{eq:step1}
  \end{align}
  
  Let $\bar {\mf x}_i^{\T}=\mf x_i^{\T}-{\bs \mu}_W(t_i)$ be a $p$-dimensional vector with $j_{th}$ entry  $ \bar {x}_{i,j}$. Let $x_{i,l}$ be  $l_{th}$ entry of $\mf x_i$.  For the sake of brevity, let $\mf L_s = \mf x_s e_s$ and $L_{s,k}$ be the $k_{th}$ element of $\mf L_s$, $M^{-1}_{l,k}(t)$ be the element in the $l_{th}$ row and $k_{th}$ column of $\mf{M}^{-1}(t)$, where $ 1\leq k,l \leq p$.
  Assumption \ref{Ass-W} guarantees $\sup_{t \in [0,1],1 \leq l,k \leq p}|M^{-1}_{l,k}(t)|$ is bounded.
  Consider the following m-dependent sequences \begin{align}
      \tilde{L}_{s, k, m}=\mathbb{E}\left(x_{s,k} e_{s} | \varepsilon_{s}, \dots, \varepsilon_{s-m}\right),~\tilde{\bar{x}}_{s,k, m}=\mathbb{E}\left(x_{s,k}-\E(x_{s,k})| \varepsilon_{s}, \dots, \varepsilon_{s-m}\right), 1\leq k\leq p.
  \end{align} 
  Further define
  \begin{align}
    \tilde M_r^{(m)}=\sum_{k=1}^p\sum_{l=1}^p\sum_{j=1}^n\left(\frac{1}{nb_n}\sum_{ i=\lf nb_n \rf+1 }^r\bar{x}_{i,l}M^{-1}_{l,k}(t_i)K_{b_n}^*\left(t_i-t_j\right)\right) \tilde{L}_{j, k, m},
  \end{align}
  and 
  \begin{align}
    \bar M_r^{(m)}=\sum_{k=1}^p\sum_{l=1}^p\sum_{j=1}^n\left(\frac{1}{nb_n}\sum_{ i=\lf nb_n \rf+1 }^r\tilde{\bar{x}}_{i,l,m} M^{-1}_{l,k}(t_i)K_{b_n}^*\left(t_i-t_j\right)\right) \tilde{L}_{j, k, m}.
  \end{align}
  Write $\tilde \FF_{s, s-j} = (\varepsilon_{s-j}, \cdots, \varepsilon_{s})$,  $\FF_{s, s-j} = (\FF_{s-j-1},\varepsilon^*_{s-j}, \cdots,\varepsilon_s)$, where $ \{\varepsilon^*_{i}\}_{i\in \mathbb Z}$ are the $i.i.d.$ copy of $ \{\varepsilon_i\}_{i\in \mathbb Z}$. Observe that 
  \begin{align}
    \tilde L_{s,k,m} -L_{s,k} = \sum_{j=m}^{\infty} \{ \E[L_{s,k}|
  \tilde \FF_{s,s-j}] - \E[L_{s,k}|\tilde \FF_{s,s-j-1}]\}
  \end{align}
  is the summation of martingale differences. Let $\tilde{L}_{s, k}^{(i-l)}$ denote changing $\varepsilon_{i-l}$ with $i.i.d.$ copy $\varepsilon^*_{i-l}$ in $\tilde{L}_{s, k}$. Under condition \ref{A:U_delta}, by triangle inequality (the first inequality \eqref{Lleq1}) and Jensen's inequality (the second inequality \eqref{Lleq2}), we have
  \begin{align}
    \|\tilde L_{s,k,m} -L_{s,k} \|_4 
    &\leq C \sum_{j=m}^{\infty} \left\| \E[L_{s,k}|
    \tilde \FF_{s,s-j}] - \E[L_{s,k}|\tilde \FF_{s,s-j-1}]\right\|_4 \label{Lleq1}\\ 
   &\leq C \sum_{j=m}^{\infty} \left\| L_{s,k}^{(s-j-1)} - L_{s,k} \right\|_4 \label{Lleq2}
    = O(\chi^{m}).\label{Lleq4}
  \end{align}
 Then, using Jensen's equality, we have 
  \begin{align}
    \|\proj_{s-j}(\tilde L_{s,k,m} -L_{s,k})\|_4 \leq 2\|\tilde L_{s,k,m} -L_{s,k} \|_4 = O(\chi^m).
  \end{align}
  At the same time, 
  \begin{align}
    \|\proj_{s-j}(\tilde L_{s,k,m} -L_{s,k})\|_4 \leq\|\proj_{s-j}\tilde L_{s,k,m}\|_4 + \| \proj_{s-j}L_{s,k}\|_4 \leq 2 \delta_4(U,j)=O(\chi^j).
  \end{align}
  Therefore, 
  \begin{align}
    \|\proj_{s-j}(\tilde L_{s,k,m} -L_{s,k})\|_{4} = O(\chi^{\max(j,m)}).
    \label{eq:marL}
  \end{align} 
  Similarly, we have 
  \begin{align}
    \|\tilde{\bar{x}}_{i,l,m} - \bar{x}_{i,l}\|_{4} = O(\chi^m),\quad 
    \|\proj_{s-j}(\tilde{\bar{x}}_{i,l,m} - \bar{x}_{i,l})\|_{4}  = O(\chi^{\max(j,m)}).
    \label{eq:marx}
  \end{align}
  
  As a consequence, by Burkholder's inequality and \eqref{eq:marL}, for some large constant $M$,
  \begin{align}
    &\max_{1\leq i \leq n}\left \|\sum_{j=1}^n K_{b_n}^*\left(t_i-t_j\right)(L_{j, k} - \tilde{L}_{j, k, m}) \right\|_4\\ &\leq M\max_{1\leq i \leq n}\sum_{l=0}^{\infty}\left  \| \sum_{j=1}^n K_{b_n}^*\left(t_i-t_j\right)\proj_{j-l}(L_{j, k} - \tilde{L}_{j, k, m}) \right\|_4  = O(\sqrt{nb_n}m\chi^m).
    \label{eq:ker_m_approx}
  \end{align} 
  Then, by Cauchy inequality and \eqref{eq:ker_m_approx}, it follows that
  \begin{align}
    &\left \|\max_{\lf nb_n \rf + 1 \leq r \leq n- \lf  nb_n \rf}| \tilde{M}_r - \tilde{M}_r^{(m)}| \right \| \notag\\
    &\leq \sum_{k=1}^p \left\{\max_{1 \leq i \leq n}\left \|\frac{1}{nb_n}\sum_{j=1}^n K_{b_n}^*\left(t_i-t_j\right)(L_{j, k} - \tilde{L}_{j, k, m} )\right \|_4 \times \sum_{ i=\lf nb_n \rf+1 }^{n - \lf nb_n \rf}  \sum_{l=1}^p \left \|\bar{x}_{i,l}M^{-1}_{l,k}(t_i)\right\|_4 \right\}\notag\\ 
    & = O\lt(p^2 \sqrt{n/b_n} m \chi^m \rt).
    \label{eq:m_approx1}
  \end{align}
  
  An elementary calculation using Burkholder's inequality shows that 
  \begin{align}
   \max_{1\leq i\leq n} \left\|\sum_{j=1}^n K_{b_n}^*\left(t_i-t_j\right)\tilde{L}_{j, k, m} \right\|_4 = O\left(\sqrt{n b_n}\right).
    \label{eq:ker_m}
  \end{align}
  Along with equation \eqref{eq:marx}, it's straightforward to show that
  \begin{align}
    &\left \|\max_{\lf nb_n \rf + 1 \leq r \leq n- \lf  nb_n \rf}\lt| \bar{M}_r^{(m)} - \tilde{M}_r^{(m)}\rt| \right \|\notag \notag\\
    & \leq  \frac{1}{nb_n} \sum_{k=1}^p \left \{ \sum_{ i=\lf nb_n \rf+1 }^{\lf n-nb_n \rf}\sum_{l=1}^p \left\|(\bar{x}_{i,l}-\tilde{\bar{x}}_{i,l,m})M^{-1}_{l,k}(t_i)\right\|_4 \times \max_{1 \leq i \leq n} \left\|\sum_{j=1}^n K_{b_n}^*\left(t_i-t_j\right)\tilde{L}_{j, k, m} \right\|_4 \right \} \\ 
    & = O\left(p^2 \sqrt{n/b_n} \chi^m\right).
    \label{eq:m_approx2}
  \end{align}
  Therefore,  $\bar{M}_r^{(m)}$ is an appropriate approximation of $\tilde{M}_r$, in that 
  \begin{align}
    \left \|\max_{\lf nb_n \rf + 1 \leq r \leq n- \lf  nb_n \rf}\lt| \bar{M}_r^{(m)} - \tilde{M}_r\rt| \right \| = O\left(p^2 \sqrt{n/b_n} m \chi^m\right).
    \label{eq:m_approx12}
  \end{align} 
  Using the argument similar to  \cref{Propo-4-24-1}, we have that
   \begin{align}
  \max_{\lf nb_n \rf + 1 \leq r \leq n- \lf  nb_n \rf }\E(\bar M_r^{(m)})=O\left \{p^2 \sum_{i=1}^n \sum_{j=1}^n 
  \chi^{|i-j|}/(nb_n)\right \} = O\left(p^2 b_n^{-1}\right).
  \label{eq:m_approx3}
   \end{align} 
   \par
  Now, we proceed to compute the order of $\max_{\lf nb_n \rf + 1 \leq r \leq n- \lf  nb_n \rf}| \bar{M}_r^{(m)} - \E(\bar{M}_r^{(m)})|$.\par
  Write $\bar a_{n,i}^{(m)} = \sum_{k=1}^p\sum_{l=1}^p \left(\frac{1}{nb_n}\tilde{\bar{x}}_{i,l,m} M^{-1}_{l,k}(t_i)\sum_{j=1}^n K_{b_n}^*\left(t_i-t_j\right)\right) \tilde{L}_{j, k, m}$, then $\bar{M}_r^{(m)} = \sum_{ i=\lf nb_n \rf+1}^r \bar a_{n,i}^{(m)}$.
  Observe that $\proj_{j-s}(\tilde{\bar{x}}_{j,l,m} \tilde L_{i,k,m}) = 0$, for $s > 2m$. Then, we have 
  \begin{align}
    \left \|\max_{\lf nb_n \rf + 1 \leq r \leq n- \lf  nb_n \rf}\lt| \bar{M}_r^{(m)} - \E(\bar{M}_r^{(m)})\rt| \right \| \leq  \sum_{s=0}^{2 m}\left\|\max _{\lf nb_n \rf + 1 \leq r \leq n- \lf  nb_n \rf}\left|\sum_{i =\lf n b_{n}\rf}^{r} \proj_{i-s} \bar{a}_{n, i}^{(m)}\right|\right\|.
    \label{eq:converge_m_approx}
  \end{align}
  According to Lemma 3 in \cite{zhou2010simultaneous}, by triangle inequality, we have
  \begin{align}
    \|\proj_{i-s} \bar{a}_{n, i}^{(m)}\|
    & \leq \sum_{k=1}^p\sum_{l=1}^p \left\{ \frac{1}{nb_n}\lt\|\tilde{\bar{x}}_{i,l,m}-\tilde{\bar{x}}_{i,l,m}^{(i-s)}\rt\|_4 \left|M^{-1}_{l,k}(t_i)\right| \left\|\sum_{j=1}^n K_{b_n}^*\left(t_i-t_j\right) \tilde{L}_{j, k, m}\right\|_4 \right. \notag \\ 
    &+  \left. \frac{1}{nb_n}\lt\|\tilde{\bar{x}}_{i,l,m}^{(i-s)}\rt\|_4 \left|M^{-1}_{l,k}(t_i)\right| \left\|\sum_{j=1}^n K_{b_n}^*\left(t_i-t_j\right) \left(\tilde{L}_{j, k, m}-\tilde{L}_{j, k, m}^{(i-s)}\right)\right\|_4  \right\}\\ 
    &= O \left \{p^2\left(\frac{\chi^{s}}{\sqrt{nb_n}}+\frac{m}{nb_n}\right) \right \}.
  \end{align}
  The last inequality follows from the fact that by Jensen's inequality $\|\tilde{\bar{x}}_{i,l,m}-\tilde{\bar{x}}_{i,l,m}^{(i-s)}\|_4  \leq \|\bar{x}_{i,l} - \bar{x}_{i,l}^{(i-s)}\|_4 = O(\chi^{s})$, Assumption \ref{Ass-W}, and $\tilde{L}_{j, k, m}-\tilde{L}_{j, k, m}^{(i-s)}$ is zero when $j \leq i - s$ and $j \geq i - s + m$.
  Then, since $\proj_{i-s} \bar{a}_{n, i}^{(m)}$ are martingale differences, by Doob's inequality, we obtain
  \begin{align}
    \left\|\max _{\lf n b_n \rf  + 1 \leq r \leq n - \lf n b_n\rf}\left|\sum_{i=\lf n b_n \rf + 1}^{r} \proj_{i-s} \bar{a}_{n, i}^{(m)}\right|\right\|
    &\leq C \left\|\sum_{i=\lf n b_n \rf + 1}^{n - \lf nb_{n}\rf} \proj_{i-s} \bar{a}_{n, i}^{(m)}\right\| \\
     &= O\left\{p^2 \sqrt{n} \left(\frac{\chi^{s}}{\sqrt{nb_n}}+ \frac{m}{nb_n} \right) \right\}, \label{eq:Pabarsum}
  \end{align}
  where $C$ is a positive constant. Plugging \eqref{eq:Pabarsum} into inequality \eqref{eq:converge_m_approx} yields
  \begin{align}
    \left \|\max_{\lf nb_n \rf + 1 \leq r \leq n- \lf  nb_n \rf}\lt| \bar{M}_r^{(m)} - \E(\bar{M}_r^{(m)})\rt| \right \| = O \left \{p^2\left(\frac{m^2}{n^{1/2} b_n} + b_n^{-1/2}\right)\right \}. 
    \label{eq:m_approx4}
  \end{align}
  Finally, from \eqref{eq:m_approx12}, \eqref{eq:m_approx3} and \eqref{eq:m_approx4}, when the dimension $p$ is fixed, taking $m = \lf \log n \rf$, we have proved \eqref{eq:step1}.
 
  Therefore,  by \eqref{eq:null_Gstar} and \eqref{eq:step1}, we have
  \begin{align}
  \max_{\lf nb_n \rf + 1 \leq r \leq n- \lf  nb_n \rf }\left|G^*(r)-\sum_{ i=\lf nb_n \rf+1}^r\tilde e_i\right|
  & = \Op\left(b_n^{-5/4} \log n + nb_n^4 + \sqrt{n}b_n^{5/4} \right),
  \end{align}
which is of smaller order of $\sqrt{n}$.

 \textbf{Step 2:} Recall ${\bs \Sigma}(t_i)$ is the long-run covariance matrix of the process $(\mf x_{i}e_i)$. Since  in our regression we let $\mf x_{i,1}=1$ for $1\leq i\leq n$, $(\bs \Sigma(t_i))_{(1,1)}=\sigma^2_H(t_i)$ is the long-run variance of the process $(e_i)$. We shall show that 
  there exist $i.i.d.$ $N(0, I_{p})$, $\mf V_i=(V_{i,1},...,V_{i,p})^\top$, and 
  \begin{align}\tilde G^*(r)=-\sum_{j=1}^n\left(\frac{1}{nb_n}\sum_{ i=\lf nb_n \rf+1 }^r{\bs \mu}_W^{\T}(t_i)\mf{M}^{-1}(t_i)K_{b_n}^*\left(t_i-t_j\right)\right) {\bs \Sigma}^{1/2}(t_j)\mf{V}_j+\sum_{ i=\lf nb_n \rf+1}^r\sigma_{H}(t_i)V_{i,1}, 
  \end{align}
  such that
  \begin{align}\label{eq:step2}
  \max_{\lf n b_n \rf  + 1 \leq r \leq n - \lf n b_n\rf }|\tilde G^*(r)- G^*(r)|=\Op\left(n^{1/4}\log^2n\right).
  \end{align}
  
  From Corollary 1 in \cite{wu2011gaussian}, we have
  \begin{equation}
  \max _{1 \leq i \leq n}\left|\sum_{j=1}^{i} \mf x_j e_{j}-\sum_{j=1}^{i} {\bs \Sigma}^{1/2}\left(t_{j}\right) \mf V_{j}\right|=o_{\mathbb{P}}\left(n^{1 / 4} \log ^{2} n\right),
  \label{eq:p_gaussian}
  \end{equation}
  and in the first dimension,
  \begin{align}
    \max _{1 \leq i \leq n}\left|\sum_{j=1}^i e_j - \sum_{j=1}^i\sigma_{H}(t_j)V_{j,1}\right|\ = \op\left(n^{1 / 4} \log ^{2} n\right).
  \label{eq:Gaussian}
  \end{align}
  Write $\mf m_{r,j}^{\T} = \frac{1}{nb_n}\sum_{ i=\lf nb_n \rf+1 }^r{\bs \mu}_W^{\T}(t_i)\mf{M}^{-1}(t_i)K_{b_n}^*\left(t_i-t_j\right)$, then 
  \begin{align}
    \max _{\lf n b_n \rf  + 1 \leq r \leq n - \lf n b_n\rf }\left|\tilde G^*(r)- G^*(r)\right| 
    & \leq  \max _{\lf n b_n \rf  + 1 \leq r \leq n - \lf n b_n\rf}\left|\sum_{ i=\lf nb_n \rf+1}^r\sigma_{H}(t_i)V_{i,1}-\sum_{ i=\lf nb_n \rf+1}^re_i\right| \notag\\
    &+\max _{\lf n b_n \rf  + 1 \leq r \leq n - \lf n b_n\rf}\left| \sum_{j=1}^n \mf m_{r,j}^{\T} \mf x_j e_j - \sum_{j=1}^n \mf m_{r,j}^{\T} {\bs \Sigma}^{1/2}\left(t_{j}\right) \mf V_{j} \right|.
    \label{eq:GA1}
  \end{align} 
  Then, \eqref{eq:step2} follows from  \eqref{eq:p_gaussian}, \eqref{eq:Gaussian}, and the summation-by-parts formula.
 
 \textbf{Step 3:}
  Define $\tilde G_{n,b_n}(t) = \tilde G^{*}(\lf n t\rf)/\sqrt{n}$. We shall show that 
  \begin{align}
    \tilde G_{n,b_n}(t) \leadsto U(t) \quad \text{on } D[0,1] \text{ with Skorohod topology}.
  \end{align}
  
  Under the bandwidth condition $nb_n^3/(\log n)^2 \to \infty$, $nb_n^6 \to 0$, we have from Step 1 and Step 2 that
  \begin{align}
  &\max_{\lf nb_n \rf + 1 \leq r \leq n- \lf  nb_n \rf }\left|\tilde G^*(r)-\sum_{ i=\lf nb_n \rf+1}^r\tilde e_i\right|
  =\op(\sqrt{n}).
  \end{align}
  Let $\mu_{W,i}(u)$ denote the $i_{th}$ component of ${\bs \mu}_W(u)$.  Let $m_{r,j,k}$ be the $k_{th}$ element in $\mf m_{r,j}^{\T}$, and $\{\cdot\}_k$ be the $k_{th}$ element in the vector. Under condition \ref{A:Mt} and \ref{A:W}, ${\bs \mu}_W(t)$ and $ \mf{M}^{-1}(t)$ are  Lipschitz continuous.  Since $K^{*}(t)$ can be non-zero only for $t \in [-1, 1]$, elementary calculation shows
    \begin{align}
    m_{r,j, k}
    & = \sum_{i=1}^p  \mu_{W,i}^{\T}(t_j) M_{i,k}^{-1}(t_j) \int_{1-\frac{j}{nb_n}}^{\frac{r-j}{nb_n}} K^{*}\left(y\right) dy + O\left(\frac{1}{n b_n} + b_n\right).
    \label{eq:mrj}
  \end{align}
  Consider $0 \leq t_1 \leq t_2 \leq 1$. Let $s = \lf nt_1 \rf$, $r = \lf nt_2 \rf$.
  The covariance of $\tilde G_{n, b_n}(t)$ is
  \begin{align}
    \E\left\{\frac{\tilde G^*(r) \tilde G^*(s)}{n}\right\} &= \E\lt\{ \sum_{j=1}^n \mf m_{r,j}^{\T}{\bs \Sigma}^{1/2}(t_j)\mf V_j \sum_{j=1}^n \mf m_{s,j}^{\T}{\bs \Sigma}^{1/2}(t_j)\mf V_j \rt\}/n  \notag\\ 
    &- \E\lt\{ \sum_{j=1}^n \mf m_{r,j}^{\T}{\bs \Sigma}^{1/2}(t_j)\mf V_j \sum_{ i=\lf nb_n \rf+1}^{s} \sigma_H(t_i)V_{i,1} \rt\}/n \notag\\ 
    &-\E\lt\{ \sum_{j=1}^n \mf m_{s,j}^{\T}{\bs \Sigma}^{1/2}(t_j)\mf V_j \sum_{ i=\lf nb_n \rf+1}^{r} \sigma_H(t_i)V_{i,1}\rt\}/n \notag\\ 
    &+ \E\lt\{\sum_{ i=\lf nb_n \rf+1}^{r} \sigma_H(t_i)V_{i,1}\sum_{ i=\lf nb_n \rf+1}^{s} \sigma_H(t_i)V_{i,1}\rt\}/n \\ 
    & := I + II + III + IV.
  \end{align}
  Without loss of generality, suppose $\lf nb_n \rf + 1 < s \leq r < n-\lf nb_n \rf$. Let $M_W(t) = {\bs \mu}_W^{\T}(t) \mf{M}^{-1}(t) {\bs \Sigma}(t) \mf{M}^{-1}(t) {\bs \mu}_W (t)$, we have
  \begin{align}
    I & = \sum_{j=1}^n \mf m_{r,j}^{\T}{\bs \Sigma}(t_j)\mf m_{s,j}/n
     = \int_{0}^1 M_W(t) \int_{1-\frac{t}{b_n}}^{\frac{r- t n}{nb_n}} K^{*}\left(y\right) dy \int_{1-\frac{t}{b_n}}^{\frac{s-t n}{nb_n}} K^{*}\left(y\right) dy dt + O\left(\frac{1}{n b_n} + b_n\right).
  \end{align}
  Let $M_{WK}(r,s,t) = M_W(t) \int_{1-\frac{t}{b_n}}^{\frac{r- t n}{nb_n}} K^{*}\left(y\right) dy \int_{1-\frac{t}{b_n}}^{\frac{s-t n}{nb_n}} K^{*}\left(y\right) dy $. When $s /(n b_n) \to \infty$,
  \begin{align}
    \int_{2b_n}^{\frac{s-nb_n}{n}} M_{WK}(r,s,t)  dt 
    &= \int_{0}^{s/n} M_W(t) dt \left\{\int_{-1}^{1} K^{*}(y)dy\right\}^2+O(b_n),
  \end{align}
Since $\int_{1-\frac{t}{b_n}}^{\frac{s - t n}{nb_n}} K^{*}\left(y\right) dy = 0$ for $t > \frac{s+nb_n}{n}$, we have 
  \begin{align}
    I = \int_{0}^{s/n} M_W(r,s,t) dt \left\{\int_{-1}^{1} K^{*}(y)dy\right\}^2+O\left(\frac{1}{n b_n} + b_n\right).
  \end{align}
Similar for the case $s = O(n b_n)  $, since
  \begin{align}
    \int_{0}^{s/n}  M_{WK}(r,s,t) dt = O\left(b_n\right),\quad \int_{0}^{s/n} M_W(t) dt \left\{\int_{-1}^{1} K^{*}(y)dy\right\}^2= O\left(b_n\right),
  \end{align}
  we have, 
  \begin{align}
    I = \int_{0}^{s/n} M_W(t) dt \left\{\int_{-1}^{1} K^{*}(y)dy\right\}^2+O\left(\frac{1}{n b_n} + b_n\right).\label{eq:nullI}
  \end{align}
  
  Similar and tedious calculation shows 
 \begin{align}
    II = - \int_{0}^{s/n} \{{\bs \mu}_W^{\T}(t)\mf{M}^{-1}(t){\bs \Sigma}^{1/2}(t)\}_1 \sigma_H(t)dt \int_{-1}^{1}K^{*}(y)dy +O\left(\frac{1}{nb_n} + b_n\right).\label{eq:nullII}
  \end{align}

  For $III$, if $s/ (nb_n) \to \infty$,
  \begin{align}
    III &= - \frac{1}{n}\sum_{j = \lf nb_n \rf + 1}^{r} \{\mf m_{s,j}^{\T} {\bs \Sigma}(t_j)\}_1 \sigma_H(t_j)\\
    & = - \int_{2b_n}^{s/n-b_n} \int_{-1}^{1}K^{*}(y)dy \{{\bs \mu}_W^{\T}(t)\mf{M}^{-1}(t){\bs \Sigma}^{1/2}(t)\}_1 \sigma_H(t)dt\notag\\ 
    &- \int_{s/n-b_n}^{s/n+b_n} \int_{-1}^{1}K^{*}(y)dy \{{\bs \mu}_W^{\T}(t)\mf{M}^{-1}(t){\bs \Sigma}^{1/2}(t)\}_1 \sigma_H(t)dt \notag\\
    &-\int_{s/n+b_n}^{r/n} \int_{1-\frac{tn}{nb_n}}^{\frac{s-tn}{nb_n}}K^{*}(y)dy \{{\bs \mu}_W^{\T}(t)\mf{M}^{-1}(t){\bs \Sigma}^{1/2}(t)\}_1 \sigma_H(t)dt+ O\left(\frac{1}{nb_n} + b_n\right)\label{eq:III}\\ 
    &= - \int_{0}^{s/n} \{{\bs \mu}_W^{\T}(t)\mf{M}^{-1}(t){\bs \Sigma}^{1/2}(t)\}_1 \sigma_H(t)dt \int_{-1}^{1}K^{*}(y)dy +O\left(\frac{1}{nb_n} + b_n\right).\label{eq:nullIII}
  \end{align}
  For the third term in \eqref{eq:III}, consider  two cases: $r/n >s/n +b_n$ and $s/n \leq r/n \leq s/n + b_n$.
  If $s/n \leq r/n \leq s/n + b_n$, $(s/n+b_n) - r/n \leq b_n$. Then  the third term in \eqref{eq:III} is $O(b_n)$. If $r/n > s/n + b_n$, for $s/n + b_n \leq t \leq r/n$, $\frac{s-tn}{nb_n} \leq -1$. Therefore, this term  is 0. By careful investigation, the result still holds if $s= O(nb_n)$.
  
  The calculation of $IV$ is rather straightforward,
  \begin{align}
    IV =  \int_{0}^{s/n} \sigma_H^2(t) dt + O\left(\frac{1}{nb_n} + b_n\right).\label{eq:nullIV}
  \end{align}
  Then from the approximation of $I-IV$, we have 
  \begin{align}
    \max_{\lf n b_n \rf + 1 <s \leq r <n -\lf n b_n\rf } \left|\E\{\tilde G^*(s)\tilde G^*(r) \}/n - \gamma(s/n, r/n)\right| = O\left(b_n+\frac{1}{nb_n}\right).
    \label{eq:lim_cov}
  \end{align}
  In addition, define $\tilde G^*(s) = \tilde G^*(\lf n b_n\rf)$ if $s < \lf n b_n \rf + 1$ and $\tilde G^*(s) = \tilde G^*(n-\lf n b_n\rf)$ if $n - \lf n b_n\rf < s \leq n $.
By the continuity of $\gamma$, we have
  \begin{align}
    \sup_{0 \leq  t_1 \leq  t_2 \leq 1} \left|\E\left\{\tilde G^*(\lf n t_1 \rf)\tilde G^*(\lf n t_2 \rf) \right\}/n - \gamma(t_1, t_2)\right| =  O\left(b_n+\frac{1}{nb_n}\right).
    \label{eq:lim_cov1}
  \end{align}

 The finite dimension convergence of the Gaussian process $\tilde G_{n,b_n}(t)$ to $U(t)$ then follows from the Cramer Wold device.

  We proceed to show the tightness of $\tilde G_{n,b_n}(t)$.
  For $1 \leq r \leq s \leq n$, since
  \begin{align}
    \tilde G^*(s) - \tilde G^*(r) = -\sum_{j=1}^n\left(\mf m_{s,j}^{\T} - \mf m_{r,j}^{\T}\right) {\bs \Sigma}^{1/2}(t_j)\mf{V}_j+\sum_{i=r+1}^s\sigma_{H}(t_i)V_{i,1}, 
  \end{align}
  it follows from Burholder's inequality that
  \begin{align}
    \left\| \tilde G^*(s) - \tilde G^*(r) \right\|_4^2 &\leq
   K_0 \left( \sum_{j=1}^n \left \|\left(\mf m_{s,j}^{\T} - \mf m_{r,j}^{\T}\right) {\bs \Sigma}^{1/2}(t_j)\mf{V}_j \right\|_4^2+ \sum_{i=r+1}^s\|\sigma_{H}(t_i)V_{i,1}\|_4^2 \right)\\ 
    & \leq K_1 \sum_{j=1}^n \left(\mf m_{s,j}^{\T} - \mf m_{r,j}^{\T}\right) {\bs \Sigma}(t_j) \left(\mf m_{s,j} - \mf m_{r,j}\right)+ K_2(s-r)
  \end{align}
  where $K_0$, $K_1$ and $K_2$ are sufficiently large constants. For the first term, by the result in \eqref{eq:mrj}, we have 
  \begin{align}
     m_{s, j, k} -  m_{r, j, k} = \sum_{i=1}^p  \mu_{W,i}^{\T}(t_j) M_{i,k}^{-1}(t_j) \int_{\frac{r-j}{nb_n}}^{\frac{s-j}{nb_n}} K^*(y) dy  +  O\left(b_n+\frac{1}{nb_n}\right).
  \end{align}
  Observe that
  $m_{s, j, k} -  m_{r, j, k}$ is zero when $j < r - \lf nb_n\rf + 1$ and $j > s + \lf nb_n\rf$. When $ r - \lf nb_n\rf + 1\leq j \leq s + \lf nb_n\rf$,  if $s-r > 2 nb_n$,   $m_{s, j, k} -  m_{r, j, k}$  is $O(1)$  and otherwise $O\left(\frac{s-r}{nb_n}\right)$.
  Hence, if $s-r > 2 nb_n$,
  \begin{align}
    \| \tilde G^*(s) - \tilde G^*(r) \|_4^2  \leq K_3 (s-r + 2 \lf n b_n \rf) + K_2(s-r) = O(|s-r|), \label{eq:tildeG1} 
  \end{align}
  while for $s - r \leq 2 nb_n$,
  \begin{align}
    \| \tilde G^*(s) - \tilde G^*(r) \|_4^2  \leq K_4 \frac{(s-r)^2(s-r + 2 \lf n b_n \rf)}{(nb_n)^2} + K_2(s-r) = O(|s-r|), \label{eq:tildeG2} 
  \end{align}
  where $K_3$, $K_4$  are sufficiently large constants. 
  Hence, for $0 \leq t_1 \leq t \leq t_2 \leq 1$, there exists a sufficiently large constant $K$, s.t.
  \begin{align}
    &\E\left\{|\tilde G_{n, b_n} (t)- \tilde G_{n, b_n}(t_1)|^{2}|\tilde G_{n, b_n}(t_2)- \tilde G_{n, b_n}(t)|^{2}\right\}\notag\\ 
    &\leq \left ( \|\tilde G^*(\lf n t\rf) - \tilde G^*(\lf n t_1\rf)\|_{4} \|\tilde G^*(\lf n t_2\rf) - \tilde G^*(\lf n t\rf)\|_{4}\right)^{2}/n^{2}\\ 
    &\leq K (t_2-t_1)^{2}.\label{eq:null_tightness}
  \end{align}
 Equation (13.2) of \cite{billingsley1999convergence} follows from the the continuity of $U(t)$. By Theorem 13.5 in \cite{billingsley1999convergence}, the $\alpha = \beta = 1$ case, we have the tightness of $\tilde G_{n,b_n}(t)$. 
  The tightness of $\tilde G_{n,b_n}(t)$  and the finite dimension convergence lead to the convergence $\tilde G_{n,b_n}(t) \leadsto U(t)$ on $D[0,1]$ with Skorohod topology.
  Finally, by the continuous mapping theorem, we have proved the convergence of $T_n$ to $\int_0^1 U^2(t) dt$. \hfill $\Box$
\subsubsection{Proof of  \texorpdfstring{\cref{thm:null_dist}}{Theorem 4.1} (ii) }
Let $\mf z(u) := (z_1(u), \cdots, z_p(u))^{\T}$ denote $\mf \Sigma^{1/2}(u)\mf M^{-1}(u) \bs \mu_W(u)$.   Then the covariance structure of the limiting distribution in (i) of  \cref{thm:null_dist} can be written as 
    \begin{align}
      \E(U(r)U(s)) = \int_0^{r \wedge s} \left((\sigma_H(u) - z_1(u))^2 + \sum_{j=2}^p z_j^2(u)\right)  du.
    \end{align}
    Therefore, the limiting distribution in \cref{thm:null_dist} degenerates if and only if $\lambda(\mf z(u) \neq (\sigma_H(u),0 \cdots, 0)^{\T}) >0 $ holds. We shall show that
      \begin{align}
        s_1^{-1}(\tilde T_n/b_n - s_2) \Rightarrow \chi^2_1, \label{deg:step2}
      \end{align}
      where $s_1 = 2\sigma^2_H(0) \int_0^1 \left(\int_{v-1}^1 K^*(t) dt\right)^2 dv$, and $s_2 = 2\int_{0}^1 \sigma^2_H(t) dt \int_0^1 \left(\int_{v}^1 K^*(t) dt\right)^2 dv$.
    Recall in Step 2 of \cref{thm:null_dist}, 
    \begin{align}\tilde G^*(r)=-\sum_{j=1}^n\left(\frac{1}{nb_n}\sum_{ i=\lf nb_n \rf+1 }^r{\bs \mu}_W^{\T}(t_i)\mf{M}^{-1}(t_i)K_{b_n}^*\left(t_i-t_j\right)\right) {\bs \Sigma}^{1/2}(t_j)\mf{V}_j+\sum_{ i=\lf nb_n \rf+1}^r\sigma_{H}(t_i)V_{i,1}.
    \end{align}
    By Step 1 and Step 2 in the proof of \cref{thm:null_dist}, on a richer probability space we have 
    \begin{align}
      \sup_{\nb + 1 \leq r \leq n-\nb} \left|\sum_{i=\nb +1}^r \tilde e_i - \tilde G^*(r)\right| &= \Op(b_n^{- 5/4} \log n + nb_n^4 + \sqrt{n}b_n^{5/4} + n^{1/4}\log^2 n)\\ & = \op(\sqrt{nb_n}/\log n).\label{deg:step1.1-1}
    \end{align}
Define \begin{align}
  \tilde G^{\circ}(r) = \sum_{i=\nb+1}^r \sigma_H(t_i)V_{i,1} - \sum_{j=1}^n \sum_{i=\nb+1}^r \frac{K^*_{b_n}(t_i - t_j)}{nb_n} \sigma_H(t_j)V_{j,1}.
\end{align}
Since $\lambda(\mf z(u) \neq (\sigma_H(u),0 \cdots, 0)^{\T}) =0$, by Taylor series expansion, we have for $\lf nb_n\rf+1\leq r\leq n-\lf nb_n\rf$,
  \begin{align}
    \|\tilde G^*(r) - \tilde G^{\circ}(r) \|^2 & = \left\|\sum_{j=1}^n\left\{\frac{1}{nb_n}\sum_{ i=\lf nb_n \rf+1 }^r \left({\bs \mu}_W^{\T}(t_i)\mf{M}^{-1}(t_i)-{\bs \mu}_W^{\T}(t_j)\mf{M}^{-1}(t_j)\right)K_{b_n}^*\left(t_i-t_j\right)\right\} {\bs \Sigma}^{1/2}(t_j)\mf{V}_j \right\|^2\\ &=  O(nb_n^3).\label{deg:step1.2-1}
    \end{align}

Observe that for $3\nb + 1 \leq  r \leq n$,
\begin{align}\label{deg:coef}
  \sum_{i=\nb+1}^r \frac{K^*_{b_n}(t_i - t_j)}{nb_n} = \begin{cases}
  \int_{\frac{\nb-j}{nb_n}}^1 K^*(t)dt + O((nb_n)^{-1}), &1\leq j \leq r - \nb,\\ 
    1 + O((nb_n)^{-1}), & 2\nb +1 \leq j \leq r - \nb,\\
    \int_{-1}^{\frac{r-j}{nb_n}} K^*(t)dt + O((nb_n)^{-1}), &r - \nb + 1 \leq j \leq r+\nb,\\ 
    0, &\text{otherwise}.
  \end{cases}
\end{align}
Therefore, we have for $\lf nb_n\rf+1\leq r\leq n-\lf nb_n\rf$,
\begin{align}
  \| \tilde G^{\circ}(r) \|^2 = O(nb_n). 
\end{align}
Define 
\begin{align}
  \tilde T_n^{\circ} = \frac{1}{n(n-2\nb)}\sum_{r = \nb + 1}^{n - \nb}(\tilde G^{\circ}(r))^2.
\end{align}
Since $\tilde G^*(r) - \tilde G^{\circ}(r)$ and $\tilde G^{\circ}(r)$ are Gaussian processes,  we have 
\begin{align}
  | \tilde T_n^{\circ} -  T_n| = \Op(b_n).\label{eq:Tncirc}
\end{align}
Write $v_j$ short for $V_{j,1}$.
By \eqref{deg:coef} and some tedious calculation, we have
\begin{align}
  \tilde T_n^{\circ}/b_n &= \frac{1}{nb_n}\left(\sum_{j = \nb + 1}^{2\nb}\left(\int_{-1}^{\frac{j -\nb - 1}{nb_n}}K^*(t)dt\right)\sigma_H(t_j)v_j - \sum_{j = 1}^{\nb}\left(\int_{\frac{j -\nb - 1}{nb_n}}^{1}K^*(t)dt\right)\sigma_H(t_j)v_j \right)^2 \\ & + \frac{1}{n^2 b_n} \sum_{r = 3\nb + 1}^{n - \nb} \left(\sum_{j = r - \nb + 1}^r\left(\int_{\frac{r-j}{nb_n}}^1 K^*(t)dt\right)\sigma_H(t_j)v_j -  \sum_{j = r + 1}^{r + \nb} \left(\int_{-1}^{\frac{r-j}{nb_n}}K^*(t)dt\right)\sigma_H(t_j)v_j \right)^2\\ 
  &+ \Op(b_n^{1/2}) \\ 
  &:= A_n + S_n +  \Op(b_n^{1/2}),
\end{align}
where $A_n$ and $S_n$ are defined in the obvious way. Note that $A_n$ and $S_n$ are independent.
Since $v_i$ are $i.i.d.$ $N(0,1)$,
elementary calculation shows that 
\begin{align}
  A_n/\E A_n \sim \chi^2_1,\label{eq:An1}
\end{align}
and since $K^*$ is symmetric
\begin{align}
  \E A_n = s_1 + O\left(b_n + \frac{1}{nb_n}\right). \label{eq:An2}
\end{align}
Combining \eqref{eq:An1} and \eqref{eq:An2}, by Slutsky's Theorem we have
\begin{align}
  A_n/ s_1 \Rightarrow \chi^2_1,\label{eq:An}
\end{align}
where $s_1$ is as defined in \eqref{deg:step2}.
Observe that 
\begin{align}
  \E S_n = s_2 + O\left(b_n + \frac{1}{nb_n}\right),~\text{and}~\E S_n^2 = O(b_n) + (\E S_n)^2,
\end{align}
where $s_2$ is as defined in \eqref{deg:step2}.
Therefore,
\begin{align}
  |S_n - s_2| \leq |S_n - \E S_n| + |\E S_n - s_2| =  \Op\left(b_n^{1/2} + \frac{1}{nb_n}\right).\label{eq:Sn}
\end{align}
Then, combining \eqref{eq:Tncirc}, \eqref{eq:An}, and \eqref{eq:Sn}, \cref{deg:step2} is proved. \hfill $\Box$

\subsection{Related proofs of \texorpdfstring{\cref{thm:fixlocal}}{Theorem 4.2}}\label{sec:appendG}
This subsection provides the proof of \cref{lm:alt_gaussian} and \cref{prop:5.3}, which are used in the proof of \cref{thm:fixlocal} in \cref{sec:appendA} of the main paper.

\subsubsection{Proof of \texorpdfstring{\cref{lm:alt_gaussian}}{Proposition A.2}}
 
  Observe that
  \begin{equation}
    \sum_{k=1}^{s}{\bs \mu}_W(k/n) e_k^{(d)}=\sum_{k=1}^{s} \sum_{j=0}^{\infty} {\bs \mu}_W(t_k)\psi_j u_{k-j}=\sum_{j=1}^{s} u_{j} \sum_{k=j}^{s} {\bs \mu}_W(t_k)\psi_{k-j} +\sum_{j=0}^{\infty} u_{-j} \sum_{k=1}^{s} {\bs \mu}_W(t_k)\psi_{k+j}
    \end{equation}
Define $Z_j = \sum_{i=0}^j u_{-i}$ with $Z_j = 0$ when $j<0$ and $S_j = \sum_{i=1}^j u_i$ with $S_j = 0$ when $j \leq 0$.
  After a careful check of Corollary 2 of  \cite{wu2011gaussian}, there exist independent variables $v_1, v_2, \cdots , v_n \sim N(0,1)$ and independent  Gaussian variables $v_i$, $i \leq 0$, which are independent of $v_j$, $j>0$, such that
 \begin{equation}
  \zeta_n := \max _{1 \leq i \leq n}\left|\sum_{j=1}^{i} u_{j}-\sum_{j=1}^{i} \sigma_H\left(t_{j}\right) v_{j}\right|=o_{\mathbb{P}}\left(n^{1 / 4} \log ^{2} n\right),
  \label{eq:zeta1}
\end{equation}
\begin{equation}
  \zeta_n^* := \max _{1 \leq i \leq n}\left|\sum_{j=0}^{i} u_{-j}-\sum_{j=0}^{i} \sigma_H\left(t_{-j}\right) v_{-j}\right|=o_{\mathbb{P}}\left(n^{1 / 4} \log ^{2} n\right).
  \label{eq:zeta2}
\end{equation}
Define $\mf R_{k, n}=\sum_{j=0}^{\infty} {\bs \mu}_W(k/n)\psi_j \sigma\left(t_{k-j}\right) v_{k-j}$, $S_j^* = \sum_{i=1}^j \sigma_H(t_i)v_i$, and $Z_j^* = \sum_{i=0}^j \sigma_H(t_{-i})v_{-i}$. Then, by the summation-by-parts formula, we have for some integer $N$,
\begin{equation}
  \begin{aligned}
  \sum_{k=1}^{s}\left({\bs \mu}_W(k/n)e_k^{(d)}- \mf R_{k,n}\right)=& \sum_{j=1}^{s-1}\left(\sum_{k=j}^{s} {\bs \mu}_W(k/n) \psi_{k-j}-\sum_{k=j+1}^{s} {\bs \mu}_W(k/n)\psi_{k-j-1} \right)\left(S_{j}-S_{j}^{*}\right)\\&+\left(S_{s}-S_{s}^{*}\right) {\bs \mu}_W(1) \psi_0\\
  &+\sum_{j=0}^{N-1} \sum_{k=1}^{s}\left({\bs \mu}_W(k/n)\psi_{k+j} - {\bs \mu}_W(k/n)\psi_{k+j+1}\right)\left(Z_{j}-Z_{j}^{*}\right) \\
  &+\left(Z_{N}-Z_{N}^{*}\right) \sum_{k=1}^{s} \psi_{k+N}{\bs \mu}_W(k/n) \\
  &+\sum_{j=N+1}^{\infty} u_{-j} \sum_{k=1}^{s}{\bs \mu}_W(k/n) \psi_{k+j}- \sum_{j=N+1}^{\infty}\sigma_H(t_{-j}) v_{-j} \sum_{k=1}^{s} {\bs \mu}_W(k/n) \psi_{k+j}  \\
  :=& A+B+C+D+E+F.
  \end{aligned}
  \end{equation}
 Let $N = \lf n^{\alpha_0} \rf + 1$, $\alpha_0 > 1$. Condition \ref{A:W} indicates that ${\bs \mu}_W(t)$ is Lipschitz continuous and $\exists C_2>0$, $\sup_{t \in [0,1]}|{\bs \mu}_W(t)|< C_2$. From \eqref{eq:zeta1}, for some postive constant $C_1$, and any $0 < q < 1/4$.
\begin{align}
  \max_{1 \leq s \leq n} |A| &\leq C_1 \zeta_n \max_{1 \leq s \leq n} \sum_{j=1}^{s-1} \left | \sum_{k=j}^{s-1} \psi_{k-j} \left({\bs \mu}_W(k/n) - {\bs \mu}_W((k+1)/n) \right) + \psi_{n-j}{\bs \mu}_W(1)\right |
   =  \Op (n^{1/4 + q + d}).
\end{align}
 Similar techniques and \eqref{eq:zeta2} show that, 
\begin{align}
  \max_{1 \leq s \leq n} |C| 
  =  \Op (n^{\alpha_0/4 + q + \alpha_0 d}),\quad
  \max_{1 \leq s \leq n} |B| \leq |{\bs \mu}_W(1)| \zeta_n \psi_0 = \Op(n^{1/4 + q}),\quad
  \max_{1 \leq s \leq n} |D| \leq 
  \Op(n^{\alpha_0/4 + q + \alpha_0 d}).
\end{align}

Uniformly for $1 \leq s \leq n$, $\tilde \psi_{j,s} := \sum_{k=1}^{s}{\bs \mu}_W(k/n) \psi_{k+j}= O(n|j+1|^{d-1})$, it follows elementary calculation that
\begin{align}
  \left\| \max_{1 \leq s \leq n} |E| \right\|^2 &= \E\left(\sum_{j=N+1}^{\infty} |u_{-j}| \max_s|\tilde \psi_{j,s}| \right)^2 \\ 
  &= \sum_{i=N+1}^{\infty} \sum_{j=N+1}^{\infty} \max_{1 \leq s \leq n} |\tilde \psi_{j,s}|\max_{1 \leq s \leq n} |\tilde \psi_{i,s}| \E(|u_{-j}||u_{-i}| ) = O \left(n^{\alpha_0(2d-1) + 2}\right). \label{eq:E_bound}
\end{align}
Therefore, $\max_{1 \leq s \leq n}|E| = \Op(n^{1 + \alpha_0(d-1/2)})$, and $\max_{1 \leq s \leq n}|F| = \Op(n^{1+\alpha_0(d-1/2)})$.
Finally, it's straightforward to show that $\alpha_0 = 4(1-q) /3$ is the solution of $1+\alpha_0(d-1/2) = \alpha_0/4 + q + \alpha_0 d$, and hence $\alpha_0 \in (1, 4/3).$ \hfill $\Box$

\subsubsection{Proof of \texorpdfstring{\cref{prop:5.3}}{Proposition A.3}}
Observe that 
\begin{align}
\sum_{ i=\lf nb_n \rf+1}^r \mf x_i (e_i^{(d_n)}-  e_i) =  \sum_{j=1}^{L}\sum_{ i=\lf nb_n \rf+1}^r  \mf x_i u_{i-j} \psi_j + \sum_{j= L + 1}^{\infty}\sum_{ i=\lf nb_n \rf+1}^r  \mf x_i u_{i-j} \psi_j : = F_1 + F_2,
\end{align}
where $F_1$ and $F_2$ are defined in the obvious way. We prove the proposition through the following steps:

(i) Show that \begin{align}
\max_{\nb + 1 \leq r \leq n - \nb} \left| F_1 -  \sum_{j=1}^{L}\sum_{ i=\lf nb_n \rf+1}^r  {\bs \mu}_W(t_i) u_{i-j} \psi_j \right| = \Op(\sqrt{n}(\log n)^{-1/2}).\label{eq:F1final}
\end{align}

(ii) 
The second step is to prove that
\begin{align}
\max_{\nb + 1 \leq r \leq n - \nb} \left| F_2 -  \sum_{j= L + 1}^{\infty}\sum_{ i=\lf nb_n \rf+1}^r  {\bs \mu}_W(t_i) u_{i-j} \psi_j\right| = \Op(\sqrt{n}(\log n)^{-1/2}).\label{eq:F2final}
\end{align} 

\textbf{Step (i)}
Let $L = \lf (\log n)^{1/2}\rf$. Then, we have
\begin{align}
\left\|  \max_{\nb + 1 \leq r \leq n - \nb}|F_1 |\right\| \leq   \sum_{j=1}^{L} \psi_j\left\| \max_{\nb + 1 \leq r \leq n - \nb} \left|\sum_{ i=\lf nb_n \rf+1}^r  \mf x_i u_{i-j} \right|\right\| = O(L d_n \sqrt{n}) = O(\sqrt{n}(\log n)^{-1/2}).\label{eq:F111}
\end{align}
Similarly, we can show that 
\begin{align}
\left\|  \max_{\nb + 1 \leq r \leq n - \nb}\left|   \sum_{j=1}^{L}\sum_{ i=\lf nb_n \rf+1}^r  \mf {\bs \mu}_W(t_i) u_{i-j} \psi_j   \right|\right\|
&= O(L d_n \sqrt{n}) = O(\sqrt{n}(\log n)^{-1/2}).\label{eq:F112}
\end{align}
From \eqref{eq:F111} and \eqref{eq:F112}, we have shown \eqref{eq:F1final}.
\par
\textbf{Step (ii)}
Define $\tilde e^{(d_n)}_{i,L} = \sum_{j=L+1}^{\infty}\psi_j u_{i-j}$. We can write 
\begin{align}
  F_2 = \sum_{ i=\lf nb_n \rf+1}^r \mf x_i \tilde e^{(d_n)}_{i,L},\quad  \sum_{j= L + 1}^{\infty}\sum_{ i=\lf nb_n \rf+1}^r  {\bs \mu}_W(t_i) u_{i-j} \psi_j = \sum_{ i=\lf nb_n \rf+1}^r  {\bs \mu}_W(t_i) \tilde e^{(d_n)}_{i,L}.
\end{align}
We approximate $F_2$ following the proof of \cref{prop:5.2}.
Let $\tilde m = \tilde m \log n$, then $\tilde{e}^{(d_n)}_{i,\tilde m} = \sum_{j=\tilde m+1}^{\infty}\psi_j u_{i-j}$. Recall that  $\tilde{\mf{x}}_{i,\tilde m} = \E(\mf x_i| \varepsilon_i,\cdots,\varepsilon_{i-\tilde m})$.

Similar to \eqref{eq:mlong1} and \eqref{eq:mlong11}, we have
\begin{align}
  \left\| \max_{\lf nb_n \rf + 1 \leq r \leq n- \lf  nb_n \rf }\left| F_2 -  \sum_{ i=\lf nb_n \rf+1}^r \tilde{\mf{x}}_{i,\tilde m}\tilde e^{(d_n)}_{i,\tilde m}\right| \right\| = O(\sqrt{n}(\log n)^{-1/2})\label{eq:mlong1L},
\end{align}
i.e. we can approximate $F_2$ by 
$\sum_{ i=\lf nb_n \rf+1}^r \tilde{\mf{x}}_{i,\tilde m}\tilde e^{(d_n)}_{i,\tilde m}.$ 

Secondly, recall the following decomposition similar to \eqref{eq:T1T2},
\begin{align}
\sum_{ i=\lf nb_n \rf+1}^r \tilde{\mf{x}}_{i,\tilde m} \tilde e_{i,\tilde m}^{(d_n)} &= \sum_{l = 0}^{\tilde m} \sum_{i = \lf nb_n \rf + 1}^r \proj_{i-l} \left (\tilde{\mf{x}}_{i,\tilde m} \tilde{e}^{(d_n)}_{i,\tilde m}\right) + \sum_{i = \lf nb_n \rf + 1}^r \E(\tilde{\mf{x}}_{i,\tilde m} \tilde{e}^{(d_n)}_{i,\tilde m}|\FF_{i-\tilde m-1})= T_{1, r} + T_{2, r}.
\label{eq:T1T2L}
\end{align}
We proceed to show that the $T_{1,r}$ is of smaller order of $\sqrt{n}$, and $T_{2, r}$  approximates 
$\sum_{ i=\lf nb_n \rf+1}^r  {\bs \mu}_W(t_i) \tilde e^{(d_n)}_{i,L}$. 

(a) Calculation of $T_{1,r}$.
Similar to the calculation of \eqref{eq:T1}, for $l \leq \tilde m$, we have
\begin{align}
\left\|\sum_{i = \lf nb_n \rf + 1}^{n -\lf nb_n\rf } \proj_{i-l} \left (\tilde{\mf{x}}_{i,\tilde m} \tilde{e}^{(d_n)}_{i,\tilde m}\right) \right\|^2
\leq \sum_{i = \lf nb_n \rf + 1}^{n -\lf nb_n\rf } \left(\|\tilde{\mf{x}}_{i,\tilde m}-\tilde{\mf{x}}^{(i-l)}_{i,\tilde m}\|_4\|\tilde{e}^{(d_n)}_{i,\tilde m}\|_4 \right)^2  \notag.
\end{align}
Notice that uniformly for $\nb + 1 \leq i \leq n - \nb$, 
we have
\begin{align}
\| \tilde{e}^{(d_n)}_{i,\tilde m}\|^2_4  = O\left(\sum_{s=\tilde m+1}^{\infty} \left\| \proj_{i-s}\sum_{j = \tilde m+1}^{\infty} \psi_j u_{i-j} \right\|^2_4 \right)
  = O\left(\sum_{s=\tilde m+1}^{\infty}(s+1)^{2d_n-2}\right) = O((\log n)^{-1}), \label{eq:eim}
\end{align}
where the second equality is from a careful check of Lemma 3.2 in \cite{KOKOSZKA199519}.
Hence, we have 
\begin{align}
\left \| \max_{\lf nb_n \rf + 1 \leq r \leq n- \lf  nb_n \rf }|T_{1, r} | \right \| = O(\sqrt{n}(\log n)^{-1/2}).
\label{eq:T1L}
\end{align}

(b) Calculation of $T_{2,r}$. Since $\tilde e_{i,\tilde m}^{(d_n)}$ is $\FF_{i-\tilde m-1}$ measurable and $\tilde {\mf x}_{i,\tilde m} $ is independent of $\FF_{i-\tilde m-1}$, we have 
\begin{align}
T_{2, r} = \sum_{ i=\lf nb_n \rf+1}^r \E(\tilde{\mf{x}}_{i,\tilde m}|\FF_{i-\tilde m-1}) \tilde{e}^{(d_n)}_{i,\tilde m} = \sum_{ i=\lf nb_n \rf+1}^r \E(\tilde{\mf{x}}_{i,\tilde m}) \tilde{e}^{(d_n)}_{i,\tilde m} =  \sum_{ i=\lf nb_n \rf+1}^r {\bs \mu}_W(t_i) \tilde{e}^{(d_n)}_{i,\tilde m}.
\label{eq:T2L}
\end{align}

Since $\mathcal{P}_{k} \cdot=\mathbb{E}\left(\cdot \mid \mathcal{F}_{k}\right)-\mathbb{E}\left(\cdot \mid \mathcal{F}_{k-1}\right)$, $\tilde e^{(d_n)}_{i,L} - \tilde{e}^{(d_n)}_{i,\tilde m} = \sum_{j = L+1}^{\tilde m} \psi_{j} u_{i-j}$. Similar to \eqref{eq:mlong2} and by Taylor's expansion, we have
\begin{align}
&\left \| \max_{\nb + 1 \leq r \leq n - \nb} \left| T_{2,r} -  \sum_{ i=\lf nb_n \rf+1}^r  {\bs \mu}_W(t_i) \tilde e^{(d_n)}_{i,L}\right| \right\|
= O(\sqrt{n} \tilde m d_n/L) = O(\sqrt{n}(\log n)^{-1/2}).
\end{align}

Therefore, combining the results in \eqref{eq:mlong1L},  \eqref{eq:T1T2L}, \eqref{eq:T1L} and \eqref{eq:T2L}, we have proved \eqref{eq:F2final}. 
\hfill $\Box$
%


\subsection{Proof of \texorpdfstring{\cref{thm:alt_approx}}{Theorem 4.3}}
In order to derive  \cref{thm:alt_approx}, we start by investigating some technical lemmas. \cref{lm:delta_xed} studies the physical dependence of $\mf U^{(d)}(t, \FF_i)$. \cref{lm:consistency} establishes the convergence rate of local linear estimates under the fixed alternative hypothesis. In \cref{lm:alt1}, we derive the uniform Gaussian approximation of the partial sum process of nonparametric residuals. \cref{lm:alt_limit} involves the limiting distribution of a LRD Gaussian process. 

\subsubsection{Some technical lemmas}
\begin{lemma}\label{lm:delta_xed}
  Assuming  
  $ \sup_{ t \in (-\infty,1] }\left\|H\left(t, \mathcal{F}_{0}\right)\right\|_{2p}<\infty
  $, $ \sup_{ t \in [0,1] }\left\|\mf W\left(t, \mathcal{F}_{0}\right)\right\|_{2p}<\infty
  $, $ \delta_{2p}(H, k, (-\infty,1]) = O(\chi^k), \delta_{2p}(\mf W, k) = O(\chi^k), \chi \in (0,1)$,  we have 
  \begin{align}
    \delta_p(\mf U^{(d)}, k) = O(k^{d-1}).
  \end{align}
\end{lemma}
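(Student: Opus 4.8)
The plan is to exploit the product structure $\mf U^{(d)}(t,\FF_i)=H^{(d)}(t,\FF_i)\mf W(t,\FF_i)$, with $H^{(d)}(t,\F_l)=\sum_{j\ge 0}\psi_j(d)H(t-t_j,\F_{l-j})$, and to reduce the bound for $\delta_p(\mf U^{(d)},k)$ to separate $\mathcal L^{2p}$-bounds for each factor via Cauchy--Schwarz. First I would write, for each fixed $t$,
\[
\mf U^{(d)}(t,\F_k)-\mf U^{(d)}(t,\F_k^{*})=\bigl(H^{(d)}(t,\F_k)-H^{(d)}(t,\F_k^{*})\bigr)\mf W(t,\F_k)+H^{(d)}(t,\F_k^{*})\bigl(\mf W(t,\F_k)-\mf W(t,\F_k^{*})\bigr),
\]
so that by Hölder's inequality,
\[
\|\mf U^{(d)}(t,\F_k)-\mf U^{(d)}(t,\F_k^{*})\|_p\le \|H^{(d)}(t,\F_k)-H^{(d)}(t,\F_k^{*})\|_{2p}\,\|\mf W(t,\F_k)\|_{2p}+\|H^{(d)}(t,\F_k^{*})\|_{2p}\,\|\mf W(t,\F_k)-\mf W(t,\F_k^{*})\|_{2p}.
\]
The two $\mf W$-factors are controlled at once: $\sup_t\|\mf W(t,\F_k)\|_{2p}<\infty$ by assumption, and $\|\mf W(t,\F_k)-\mf W(t,\F_k^{*})\|_{2p}\le \delta_{2p}(\mf W,k)=O(\chi^k)$.

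Next I would treat the two $H^{(d)}$-factors. For the increment factor, $\|H^{(d)}(t,\F_k)-H^{(d)}(t,\F_k^{*})\|_{2p}\le \delta_{2p}(H^{(d)},k,(-\infty,1])$, and running the argument of \cref{Prop31} verbatim with the $\mathcal L^{2p}$-norm in place of $\mathcal L^{4}$ — i.e. Lemma~3.2 of \cite{KOKOSZKA199519} together with \cref{physical}, using $\delta_{2p}(H,\cdot,(-\infty,1])=O(\chi^{\cdot})$ — yields $\delta_{2p}(H^{(d)},k,(-\infty,1])=O((1+k)^{d-1})$. For the remaining factor I need the uniform moment bound $\sup_t\sup_k\|H^{(d)}(t,\F_k)\|_{2p}<\infty$; then $\|H^{(d)}(t,\F_k^{*})\|_{2p}=\|H^{(d)}(t,\F_k)\|_{2p}$ because $\F_k^{*}$ is again an i.i.d.\ sequence with the same law. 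This bound follows from the martingale-difference decomposition $H^{(d)}(t,\F_k)-\E H^{(d)}(t,\F_k)=\sum_{l\ge 0}\proj_{k-l}H^{(d)}(t,\F_k)$, Burkholder's inequality, the estimate $\|\proj_{k-l}H^{(d)}(t,\F_k)\|_{2p}\le\sum_{j=0}^{l}\psi_j(d)\delta_{2p}(H,l-j,(-\infty,1])=O((1+l)^{d-1})$ just obtained, and square-summability of $(1+l)^{d-1}$, which holds precisely because $2(d-1)<-1$ for $d\in[0,1/2)$; note $\E H^{(d)}(t,\F_k)=\sum_j\psi_j(d)\E H(t-t_j,\F_{k-j})=0$.

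Combining the four estimates gives, uniformly in $t$,
\[
\|\mf U^{(d)}(t,\F_k)-\mf U^{(d)}(t,\F_k^{*})\|_p=O\bigl((1+k)^{d-1}\bigr)+O(\chi^k)=O(k^{d-1}),
\]
since the geometric term $\chi^k$ is dominated by the power $k^{d-1}$; taking the supremum over $t$ yields $\delta_p(\mf U^{(d)},k)=O(k^{d-1})$, as claimed. The only point that requires genuine care is the uniform-in-$(t,k)$ $\mathcal L^{2p}$ moment bound for $H^{(d)}$: one cannot apply Minkowski directly to $\sum_j\psi_j(d)H(t-t_j,\F_{k-j})$ because $\sum_j|\psi_j(d)|$ diverges for $d>0$, so the $\mathcal L^2$/Burkholder route through the (square-summable) physical dependence measure is essential — the rest is bookkeeping with Hölder and the already-established properties of $H^{(d)}$.
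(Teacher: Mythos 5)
Your proof is correct and follows essentially the same route as the paper: the telescoping product decomposition with Hölder in $\mathcal L^{2p}$, the bound $\delta_{2p}(H^{(d)},k)=O((1+k)^{d-1})$ via the argument of \cref{Prop31}, and the uniform $\mathcal L^{2p}$ moment bound for $H^{(d)}$ obtained through the projection/Burkholder decomposition rather than Minkowski. Your closing remark about why Minkowski fails and the square-summability of $(1+l)^{d-1}$ is exactly the point the paper's Burkholder step is handling.
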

\begin{proof}[Proof]
  Note that for $j \leq i$,
  \begin{align}
    \delta_p(\mf U^{(d)}, i-j) 
    &\leq \|\mf W(t_i,\FF_i)\|_{2p} \delta_{2p}(H^{(d)},i-j)+ \|H^{(d)}(t_i,\FF_{i-j}^*)\|_{2p} \delta_{2p}(\mf W,i-j).\label{eq:Pu}
  \end{align}
  By Burkholder's inequality and \cref{Prop31}, we have 
  \begin{align}
    \|H^{(d)}(t_i,\FF_i)\|^2_{2p}
    &\leq  M\left\|  \sum_{j\in \mathbb{Z}} \left(\mathcal{P}_j H^{(d)} (t_i, \mathcal{F}_i) \right)^2 \right\|_{p} \leq  M \sum_{j\in \mathbb{Z}} \left\| \mathcal{P}_j H^{(d)} (t_i, \mathcal{F}_i) \right\|_{2p}^2  = O(1),
  \end{align}
  where $M$ is a sufficiently large constant.
  
  Then by \cref{Prop31} and \eqref{eq:Pu}, we have proved the desired result.
\end{proof}

\begin{lemma} \label{lm:consistency}
  Under Assumptions \ref{assumptionHp}, \ref{A:K},   \ref{A:beta} and \ref{Ass-W},  $nb_n^2 \to \infty$ and $b_n \to 0$,  we have
  \begin{equation} 
    \sup_{t \in \mathscr{T}}\left|\tilde{{\bs \beta}}_{b_{n}}^{(d)}(t)-{\bs \beta}(t)-\sum_{i=1}^n\frac{\mathbf{M}^{-1}(t)}{n b_{n}} \mathbf{x}_{i} e_{i}^{(d)} K_{b_{n}}^{*}(i / n-t)\right| = \Op\left(\rho^*_n \chi'_n \right),
    \label{eq:expansion_d}
  \end{equation}
  where $\mathscr{T} = [b_n,1-b_n]$, $\rho^*_n = (nb_n)^{d-1/2} \log n \mf 1(0 \leq d \leq 1/26) + (nb_n)^{d-1/2}b_n^{-1/2}\mf 1(1/26 < d < 1/2)+ b_n^2$ and $\chi'_n = n^{-1/2}b_n^{-3/4} + b_n^2$.
  \end{lemma}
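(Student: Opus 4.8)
The plan is to follow the Bahadur-type expansion for jackknife-corrected local linear estimators as in \cite{zhou2010simultaneous}, adapting every step to accommodate the long-range dependent errors $e_i^{(d)}$. First I would write $\hat{\bs\beta}_{b_n}(t)$ and $b_n\hat{\bs\beta}'_{b_n}(t)$ in the usual closed form: stacking the local design vectors $\mf Z_i(t) = (\mf x_i^\T, \mf x_i^\T (t_i-t)/b_n)^\T$, one has $(\hat{\bs\beta}_{b_n}(t) - \bs\beta(t),\, b_n(\hat{\bs\beta}'_{b_n}(t)-\bs\beta'(t)))^\T = \mf S_n^{-1}(t)\{\mf B_n(t) + \mf N_n(t)\}$, where $\mf S_n(t) = (nb_n)^{-1}\sum_i \mf Z_i(t)\mf Z_i(t)^\T K_{b_n}(t_i-t)$, $\mf N_n(t) = (nb_n)^{-1}\sum_i \mf Z_i(t) e_i^{(d)} K_{b_n}(t_i-t)$ is the stochastic part, and $\mf B_n(t)$ collects the deterministic bias obtained by Taylor-expanding $\bs\beta(t_i)$ around $t$ (using $\bs\beta\in C^3[0,1]$ from \cref{A:beta}). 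The jackknife estimator $\tilde{\bs\beta}^{(d)}_{b_n}(t) = 2\hat{\bs\beta}^{(d)}_{b_n/\sqrt2}(t) - \hat{\bs\beta}^{(d)}_{b_n}(t)$ is then a fixed linear combination of two such expansions, and the two $K_{b_n}$-weighted noise parts combine into the $K^*_{b_n}$-weighted sum appearing in the statement.

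\textbf{The deterministic side.} Next I would control $\mf S_n(t)$ uniformly. Since $(\mf x_i\mf x_i^\T)$ is locally stationary and SRD by \cref{Ass-W}, the kernel averages $\mf S_n(t)$ converge uniformly over $t\in\mathscr T$ to the block-diagonal matrix $\mf S(t)$ with $\mf M(t)$ and $\mu_2 \mf M(t)$ on the diagonal, with $\sup_{t\in\mathscr T}|\mf S_n(t) - \mf S(t)| = \Op(\chi'_n)$, $\chi'_n = n^{-1/2}b_n^{-1} + b_n$; this is exactly the type of bound used for $\hat{\mf M}(t)$ in \cref{sub:estimate}, via Lemma 6 of \cite{zhou2010simultaneous}. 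On the bias side, the local linear design annihilates the linear Taylor term exactly, the quadratic term contributes the standard $\tfrac12\mu_2 b_n^2\bs\beta''(t)$ bias, and the cubic remainder is $O(b_n^3)$ uniformly in $t$. Because $2(b_n/\sqrt2)^2 - b_n^2 = 0$, the jackknife combination cancels the $b_n^2$ bias, so the residual deterministic bias of $\tilde{\bs\beta}^{(d)}_{b_n}(t)$ is $O(b_n^3)$, which is dominated by $\rho^*_n\chi'_n$ since $\rho^*_n \geq b_n^2$ and $\chi'_n\geq b_n$.

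\textbf{The stochastic side, and the main obstacle.} The core of the proof is the uniform control of the LRD noise term: showing $\sup_{t\in\mathscr T}\bigl|(nb_n)^{-1}\sum_{i=1}^n \mf x_i e_i^{(d)}K_{b_n}(t_i-t)\bigr| = \Op((nb_n)^{d-1/2}\xi_n)$, where $\xi_n = \log n$ when $0\le d\le 1/26$ and $\xi_n = b_n^{-1/2}$ when $1/26 < d < 1/2$, together with the fact that replacing $\mf S_n^{-1}(t)$ by $\mf M^{-1}(t)$ (in block form) in front of this term costs only $\Op(\rho^*_n\chi'_n)$ --- the product of the matrix error $\chi'_n$ and the noise magnitude $\rho^*_n$. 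For the noise magnitude I would first invoke \cref{cor:5.2} to replace, inside each kernel window of width $2nb_n$, the covariate $\mf x_i$ by its mean $\bs\mu_W(t_i)$ at the cost $\Op(\sqrt{nb_n}(\log n)^d)$ per window (negligible against $(nb_n)^{d-1/2}$), reducing matters to a kernel-weighted partial sum of the LRD process $(\bs\mu_W(t_i)e_i^{(d)})$, whose physical dependence measure obeys $\delta_p(\mf U^{(d)},k) = O(k^{d-1})$ by \cref{lm:delta_xed} and \cref{Prop31}. The magnitude of each such window sum is $(nb_n)^{d+1/2}$ --- as follows from \cref{prop:5.2}, \cref{lm:alt_gaussian} and \cref{thm:fixlocal}(i) --- so after division by $nb_n$ one is left with $(nb_n)^{d-1/2}$ times the uniform-in-$t$ correction $\xi_n$. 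The dichotomy at $d=1/26$ is precisely where the two available arguments cross: (i) truncating $e_i^{(d)}$ at lag $m=O(\log n)$, so that the tail contribution has variance $\asymp m^{2d-1}$, and applying a Móricz / Menshov--Rademacher-type maximal inequality to the resulting $m$-dependent part produces the $\log n$ correction, but this beats the alternative only when $d$ is small enough; (ii) a crude union bound over the $O(b_n^{-1})$ translates of the kernel window always yields the $b_n^{-1/2}$ correction. Finally I would assemble the pieces: writing the starred quantities for the jackknife-combined versions, $\tilde{\bs\beta}^{(d)}_{b_n}(t)-\bs\beta(t) = \mf M^{-1}(t)\mf N^*_n(t) + \bigl(\mf S_n^{*,-1}(t)-\mf M^{-1}(t)\bigr)\bigl(\mf N^*_n(t)+\mf B^*_n(t)\bigr) + O(b_n^3)$, where the first term is exactly the asserted leading term $(nb_n)^{-1}\mf M^{-1}(t)\sum_i\mf x_ie_i^{(d)}K^*_{b_n}(t_i-t)$ and the remainder is $\Op(\rho^*_n\chi'_n)$. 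I expect the uniform control of the kernel-weighted LRD sum --- and in particular the clean derivation of the $d$-dependent factor $\xi_n$ through the truncation / maximal-inequality balance --- to be the most delicate step, since it is where the long-memory strength interacts non-trivially with the smoothing scale $b_n$.
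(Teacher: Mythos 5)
Your outer skeleton is the same as the paper's: the expansion $\tilde{\bs\beta}^{(d)}_{b_n}(t)-{\bs\beta}(t)=\mf M^{-1}(t)\mf N^*_n(t)+(\mf S_n^{*,-1}(t)-\mf M^{-1}(t))(\mf N^*_n(t)+\mf B^*_n(t))+O(b_n^3)$, the $\Op(\chi'_n)$ control of the design matrix, the jackknife cancellation of the $b_n^2$ bias, and the remainder being (noise magnitude)$\times$(matrix error) are exactly the "similar procedures in the proof of Theorem 1 of \cite{zhou2010simultaneous}" that the paper invokes, and you correctly isolate the crux: $\sup_{t\in\mathscr T}\bigl|(nb_n)^{-1}\sum_i\mf x_ie_i^{(d)}K_{b_n}(t_i-t)\bigr|=\Op((nb_n)^{d-1/2}\xi_n)$ with the two regimes of $\xi_n$.

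The gap is in how you obtain the $\log n$ regime and the threshold $d=1/26$, which is precisely the step you flag as delicate. In the paper this bound comes from the Gaussian approximation \cref{thm:fixlocal}(i): the partial sums of $\mf x_ie_i^{(d)}$ are coupled with the Gaussian LRD process $\mf R_{i,n}$ up to an error $\Op(\sqrt n(\log n)^d+n^{1+\alpha_0(d-1/2)})$, and the value $1/26$ is nothing intrinsic to a truncation/maximal-inequality trade-off — it is exactly the largest $d$ for which, taking $\alpha_0=31/24\in(1,4/3)$, the coupling error $n^{1+\alpha_0(d-1/2)}$ is $o((nb_n)^{d+1/2}\log n)$ under the bandwidth condition $nb_n^4/(\log n)^2\to\infty$ assumed where the lemma is used; for $d>1/26$ the paper abandons the coupling and instead uses the physical-dependence bound of \cref{lm:delta_xed} together with the Remark-4-type maximal inequality of \cite{wu2007strong} and the chaining argument of Proposition B.1 of \cite{dette2018change}, which is what produces $b_n^{-1/2}$. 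Your proposed mechanism (i) — truncate $e_i^{(d)}$ at lag $m=O(\log n)$ and apply a M\'oricz/Menshov--Rademacher inequality to the $m$-dependent part — attributes the $\log n$ to the wrong component: the truncated (short-lag) piece is the SRD-like part whose window sums are only $O(\sqrt{nb_n}(\log n)^d)$ and is negligible anyway, while the tail $\sum_{j>m}\psi_ju_{i-j}$ is what carries the $(nb_n)^{d+1/2}$ main term; with only $\mathcal L^4$ moments available under \cref{assumptionHp}, any union-bound or moment-based maximal inequality over the $O(b_n^{-1})$ (or $n$) window positions for that tail yields polynomial factors in $b_n^{-1}$, not $\log n$ — the logarithmic penalty is only achievable after the Gaussian coupling. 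A related loose end: \cref{cor:5.2} bounds $\max_k\|\cdot\|$, the maximum over windows of an $L^2$ norm, so your window-wise replacement of $\mf x_i$ by $\bs\mu_W(t_i)$ does not by itself give a uniform-in-$t$ statement (a Chebyshev union bound over windows would again cost a polynomial factor that is not negligible in the $\log n$ regime); uniformity has to be routed through the maximal statements \cref{prop:5.2}/\cref{thm:fixlocal} and summation by parts against the kernel. So the claimed dichotomy and the specific constant $1/26$ are asserted but not actually derivable from the argument you sketch; you need the Gaussian approximation not just for the size of a single window sum but as the device that delivers uniformity with only a logarithmic cost.
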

  
  \begin{proof}
    According to \cref{thm:fixlocal},when $d \leq 1/26$, take $\alpha_0$ in \cref{thm:fixlocal} to be $31/24 \in (1, 4/3)$,  under the bandwidth condition $nb_n^4/(\log n)^2 \to \infty$, we have $n^{(\alpha_0-1)(\frac{1}{2}-d)}b_n^{d+1/2}\log n \to \infty$. Therefore, by \cref{thm:fixlocal}, we obtain
  \begin{align}
    \sup_{t \in \mathscr{T}}\left| \frac{1}{n b_n} \sum_{i=1}^n \mathbf{x}_i e_i^{(d)} K_{b_n}(t_i-t) \right|  = \Op\left ((nb_n)^{d-1/2}\log n \right).
  \end{align}

  By \cref{lm:delta_xed}, similar arguments in Remark 4 of \cite{wu2007strong} and an application of Propostion B.1 in \cite{dette2018change}, we have 
  \begin{align}
   \sup_{t \in \mathscr{T}}\left| \frac{1}{n b_n} \sum_{i=1}^n \mathbf{x}_i e_i^{(d)} K_{b_n}(t_i-t) \right| = O((nb_n)^{d-1/2}b_n^{-1/2}).
  \end{align}
  The rest of the proof follows from similar procedures in the proof of Theorem 1 in \cite{zhou2010simultaneous}.

  \end{proof}

  \begin{lemma}\label{lm:alt1}
    Define $G_d^*(r)$ as a counterpart of $G^*(r)$,
    \begin{align}
    G_d^*(r)=-\sum_{j=1}^n\left(\frac{1}{nb_n}\sum_{ i=\lf nb_n \rf+1 }^r{\bs \mu}_W^{\T}(t_i)\mf{M}^{-1}(t_i)K_{b_n}^*\left(t_i-t_j\right)\right)\mf x_je_j^{(d)}+\sum_{ i=\lf nb_n \rf+1}^re_i^{(d)}.
    \end{align}
    Under the conditions of \cref{thm:alt_approx}, 
    we have
    \begin{align}
      \max_{\lf nb_n \rf + 1 \leq r \leq n- \lf  nb_n \rf }\lt|G_d^*(r)-\sum_{ i=\lf nb_n \rf+1}^r\tilde e_i^{(d)}\rt| 
      &= \Op (\alpha_n) 
      \end{align}
    where 
      $\alpha_n =  n^{d} b_n^{-2}\log n  + (nb_n)^{d+1/2}\log n + n b_n^3$ when $d \leq 1/26$, 
      $\alpha_n =  n^{d} b_n^{-2}\log n + n^{d+1/2} b_n^{d} + nb_n^3 $, when $1/26 < d < 1/2$.
     and $\tilde e_i^{(d)}$ is the residual under $I(d)$. Under the bandwidth conditions in \cref{thm:alt_approx}, $\alpha_n = o(n^{d+1/2})$.
    \end{lemma}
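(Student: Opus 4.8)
The plan is to follow the strategy of Step~1 in the proof of \cref{thm:null_dist}, with the short-range inputs replaced by their long-range analogues from \cref{Prop31} and \cref{lm:delta_xed}. Writing $\bar{\mf x}_i^{\T} = \mf x_i^{\T} - \bs\mu_W^{\T}(t_i)$, \cref{lm:consistency} applied to the residuals under $I(d)$ gives, uniformly for $\lf nb_n\rf+1\le r\le n-\lf nb_n\rf$,
\begin{align}
\sum_{i=\lf nb_n\rf+1}^{r}\tilde e_i^{(d)} = G_d^*(r) - \tilde M_r^{(d)} + \Op(n\rho_n^*\chi_n'),\qquad
\tilde M_r^{(d)} := \sum_{j=1}^{n}\lt(\frac{1}{nb_n}\sum_{i=\lf nb_n\rf+1}^{r}\bar{\mf x}_i^{\T}\mf M^{-1}(t_i)K_{b_n}^*(t_i-t_j)\rt)\mf x_j e_j^{(d)}.
\end{align}
Expanding $n\rho_n^*\chi_n'$ gives $n^d b_n^{d-3/2}\log n + (nb_n)^{d+1/2}\log n + n^{1/2}b_n + nb_n^3$ when $d\le 1/26$ and $n^d b_n^{d-2} + n^{d+1/2}b_n^{d} + n^{1/2}b_n + nb_n^3$ when $d>1/26$; since $b_n^{d-3/2}\le b_n^{-2}$, $b_n^{d-2}\le b_n^{-2}$, and $n^{1/2}b_n = o(nb_n^3)$ under $nb_n^4/(\log n)^2\to\infty$, every term is dominated by a term of $\alpha_n$. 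Hence it suffices to show $\max_r|\tilde M_r^{(d)}| = \Op(n^d b_n^{-2}\log n)$.

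To bound the cross term I interchange the order of summation, $\tilde M_r^{(d)} = \frac{1}{nb_n}\sum_{i=\lf nb_n\rf+1}^{r}\bar{\mf x}_i^{\T}\mf M^{-1}(t_i)\mf T_i$ with $\mf T_i := \sum_{j=1}^{n}K_{b_n}^*(t_i-t_j)\mf x_j e_j^{(d)}$, a windowed sum over $|i-j|\lesssim nb_n$, and I control its mean and fluctuation separately as in Step~1 of the proof of \cref{thm:null_dist}. For the mean: the projection-orthogonality computation of \cref{Propo-4-24-1}, together with $\psi_s(d)\asymp s^{d-1}$ and the geometric decay of $\delta_2(\mf W,\cdot),\delta_2(\mf U,\cdot)$, yields $\E(\bar x_{i,l}x_{j,k}e_j^{(d)}) = O\{(1+|i-j|)^{d-1}\}$ for $j\ge i$ and $O(\chi^{i-j})$ for $i>j$; summing the kernel weights over the window gives $\E(\bar{\mf x}_i^{\T}\mf M^{-1}(t_i)\mf T_i) = O\{(nb_n)^{d}\}$, so the mean contribution of $\tilde M_r^{(d)}$ is $O(n^{d}b_n^{d-1}) = o(n^d b_n^{-2}\log n)$. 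For the fluctuation: I run the same $m$-dependent truncation as in the null case with $m=\lf\log n\rf$, but first split $e_j^{(d)} = \sum_{s\le m}\psi_s(d)u_{j-s} + \tilde e_{j,m}^{(d)}$ where $\tilde e_{j,m}^{(d)} = \sum_{s>m}\psi_s(d)u_{j-s}$. For each fixed $s\le m$ the term $\sum_j(\cdots)\mf x_j u_{j-s}$ is of the same nature as $\tilde M_r$ in the null case (geometric physical dependence) and contributes a maximal bound of null-type, so summing over $s$ costs an extra factor $\sum_{s\le m}\psi_s(d) = O(m^d) = O((\log n)^{d})$; for $\tilde e_{j,m}^{(d)}$ I use, as in the proof of \cref{prop:5.2}, that $\tilde e_{j,m}^{(d)}$ and the recent-innovation truncation $\E(\mf x_j\mid\varepsilon_j,\dots,\varepsilon_{j-m})$ of $\mf x_j$ depend on disjoint blocks of innovations, so the conditional expectation factorises into a $\bs\mu_W$-weighted term and a martingale remainder, the latter handled by Doob's and Burkholder's inequalities together with $\delta_p(\mf U^{(d)},k) = O(k^{d-1})$ from \cref{lm:delta_xed}. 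Collecting these pieces gives $\max_r|\tilde M_r^{(d)}| = \Op(n^{d}b_n^{-2}\log n)$ with room to spare.

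Adding the two contributions proves the displayed bound, and $\alpha_n = o(n^{d+1/2})$ follows term by term: $n^d b_n^{-2}\log n = o(n^{d+1/2})$ by $nb_n^4/(\log n)^2\to\infty$, $(nb_n)^{d+1/2}\log n$ and $n^{d+1/2}b_n^{d}$ are $o(n^{d+1/2})$ since $b_n\to 0$, and $nb_n^3 = o(\sqrt n) = o(n^{d+1/2})$ by $nb_n^6\to 0$. The main obstacle is the fluctuation bound for $\tilde M_r^{(d)}$: one must carry the projection/martingale-difference bookkeeping of Step~1 of \cref{thm:null_dist} through the extra fractional filter, tracking that the polynomial decay $\delta_p(\mf U^{(d)},k)=O(k^{d-1})$ replaces the geometric decay and that $m=\lf\log n\rf$ keeps all truncation errors $o(n^d b_n^{-2}\log n)$; the remaining steps are routine adaptations of the null-case argument and of \cref{prop:5.2}.
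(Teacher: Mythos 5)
Your reduction step is fine and matches the paper: applying \cref{lm:consistency} to the residuals under $I(d)$ gives the $\Op(n\rho_n^*\chi_n')$ kernel-bias term, your expansion of $n\rho_n^*\chi_n'$ and its absorption into $\alpha_n$ is correct, and your mean bound $O(n^db_n^{d-1})$ together with the head part ($s\le m$) of the fractional filter is unobjectionable. The gap is in the tail term $\sum_j w_{r,j}^{\T}\mf x_j\tilde e^{(d)}_{j,m}$ with $w_{r,j}=\frac{1}{nb_n}\sum_{i=\lf nb_n\rf+1}^{r}\mf M^{-1}(t_i)\bar{\mf x}_iK^*_{b_n}(t_i-t_j)$. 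The \cref{prop:5.2} factorization you invoke works there precisely because the weights are deterministic (identically one); here $w_{r,j}$ is a random, anticipating functional of the centered covariates inside the kernel window (it involves $\bar{\mf x}_i$ for $i$ up to $\min(r,\,j+\lf nb_n\rf)>j$), built from the same innovations that drive $\mf x_j$ and $\tilde e^{(d)}_{j,m}$. Hence $\E\bigl(w_{r,j}^{\T}\tilde{\mf x}_{j,m}\tilde e^{(d)}_{j,m}\mid\F_{j-m-1}\bigr)$ does not reduce to a $\bs\mu_W(t_j)$-weighted term, the remainder is not a martingale-difference array in $j$, and the null-case projection bookkeeping cannot be repeated either, since it relies on both factors being (approximately) $m$-dependent, whereas the LRD tail has projections decaying only like $s^{d-1}$. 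Note also that \cref{thm:alt_approx} does not assume the strict exogeneity condition \ref{E:HW}, so you cannot rescue the factorization by conditioning on the entire covariate path.

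Even granting the factorization, the leading piece $\sum_j w_{r,j}^{\T}\bs\mu_W(t_j)\tilde e^{(d)}_{j,m}$ is again a weighted LRD sum with random SRD weights, i.e.\ the original difficulty, and your sketch supplies no uniform-in-$r$ control of it: a term-by-term bound gives only $\Op\bigl(\sqrt{n/b_n}\,(\log n)^{d-1/2}\bigr)$, which exceeds $n^db_n^{-2}\log n$ for small $d$ at admissible bandwidths such as $b_n\asymp n^{-1/5}$, so real cancellation must be exploited. This is exactly the ingredient the paper uses and your proposal never touches: it applies summation by parts in $j$, writing $\tilde M_r^{(d)}$ through $\bs\xi_r(t_j)=\sum_{i=\lf nb_n\rf+1}^{r}\bigl(\mf x_i^{\T}-\bs\mu_W^{\T}(t_i)\bigr)\mf M^{-1}(t_i)K^*_{b_n}(t_i-t_j)$, and combines $\max_r|\bs\xi_r(t_j)|=\Op(\sqrt n)$ and the total-variation bound $O(\sqrt n\,b_n^{-1})$ for $\bs\xi_r(\cdot)$ with the key uniform LRD partial-sum bound $\max_{1\le r\le n}\bigl|\sum_{i\le r}\mf x_ie_i^{(d)}\bigr|=\Op(n^{d+1/2}\log n)$ obtained from \cref{prop:5.2} and \cref{lm:alt_gaussian}; this delivers $\max_r|\tilde M_r^{(d)}|=\Op(n^db_n^{-2}\log n)$ directly. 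Without that device (or an equally careful maximal/projection argument for anticipatively weighted LRD sums), the fluctuation bound for the tail — and hence the lemma — is not established by your argument.
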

  
  \begin{proof}
  Similar to \eqref{eq:null_Gstar}, by \cref{lm:consistency}, we have
    \begin{align}
    \max_{\lf nb_n \rf + 1 \leq r \leq n- \lf  nb_n \rf }\lt|G_d^*(r)-\sum_{ i=\lf nb_n \rf+1}^r\tilde e_i^{(d)}\rt|
    \leq \sup_{\lf nb_n \rf + 1 \leq r \leq n- \lf  nb_n \rf } \lt|\tilde M_r^{(d)}\rt|+ \Op(n \rho^*_n \chi'_n), 
    \label{eq:kernel_d}
    \end{align}
    where
    \begin{align}
    \tilde M_r^{(d)}=\sum_{j=1}^n\left(\frac{1}{nb_n}\sum_{ i=\lf nb_n \rf+1 }^r\lt(\mf x^{\T}_i-{\bs \mu}_W^{\T}(t_i)\rt)\mf{M}^{-1}(t_i)K_{b_n}^*\left(t_i-t_j\right)\right)\mf x_je_j^{(d)}.
    \end{align}
    Let ${\bs \xi}_{r,n}(t_j) := \sum_{ i=\lf nb_n \rf+1 }^r \lt (\mf x^{\T}_i-{\bs \mu}_W^{\T}(t_i)\rt)\mf{M}^{-1}(t_i)K_{b_n}^*\left(t_i-t_j\right)$ and ${\bs \xi}_{r,n}(t_0) = 0$, $\sum_{i=1}^{0} \mf x_i e_i^{(d)} = 0$, where $t_j = j/n$.  
     For simplicity, we omit the index of $n$ in ${\bs \xi}_{r,n}(t_j)$.
    Using the summation-by-parts formula, it follows that 
    \begin{align}
      \tilde M_r^{(d)} = \frac{1}{nb_n}{\bs \xi}_{r}(1) \sum_{j=1}^n \mf x_j e_j^{(d)} - \frac{1}{nb_n}\sum_{j=1}^n ({\bs \xi}_{r}(t_j) - {\bs \xi}_{r}(t_{j-1})) \sum_{i=1}^{j-1} \mf x_i e_i^{(d)}
      := Z_1 + Z_2,
    \label{eq:Mrd}
    \end{align}
    where $Z_1$ and $Z_2$ are defined in an obvious way.
    From the proof of  Lemma 6 in \cite{zhou2010simultaneous}, we have for any $1 \leq j \leq n$,
    \begin{align}
     \max_{\lfloor nb_n \rfloor + 1 \leq r \leq n-\lfloor nb_n \rfloor} \left|{\bs \xi}_{r}(t_j)\right| = \Op(\sqrt{n}).
     \label{eq:xi_r}
    \end{align}
  
    From \cref{prop:5.2} and \cref{lm:alt_gaussian}, we have
    \begin{align}
     \max_{1 \leq r \leq n}\left|\sum_{i=1}^r \mf x_i e_i^{(d)}\right|  = \Op(n^{d+1/2}\log n).\label{eq:xed}
    \end{align}
    
    Therefore, 
    \begin{align}\label{eq:Z_1}
      \max_{\lf nb_n \rf  + 1 \leq r \leq n - \lf nb_n \rf} |Z_1| = \frac{1}{nb_n} \max_{1 \leq r \leq n} |{\bs \xi}_r(1)| \left|\sum_{j=1}^n \mf x_j e_j^{(d)}\right| = \Op(b_n^{-1} n^d \log n).
    \end{align}
    Under the continuity of $K^*_{b_n}(\cdot)$, by similar arguments of \eqref{eq:xi_r}, we have for any  $1 \leq j \leq n$, 

    \begin{align}
      &\left \| \max_{\lf nb_n \rf  + 1 \leq r \leq n - \lf nb_n \rf} \left \{\sum_{j=1}^n | {\bs \xi}_r(t_j) - {\bs \xi}_r(t_{j-1})| \right\} \right\|_4
      = O(n^{1/2}b_n^{-1}).\label{eq:xi_diff}
    \end{align}

    Then, by \eqref{eq:xed} and \eqref{eq:xi_diff}, we obtain 
    \begin{align}
       \max_{\lf nb_n \rf  + 1 \leq r \leq n - \lf nb_n \rf} |Z_2|  
       = \Op(b_n^{-2}n^d \log n).\label{eq:Z_2}
    \end{align}
     With \eqref{eq:Mrd}, \eqref{eq:Z_1} and \eqref{eq:Z_2}, it follows that
    \begin{align}\label{eq:Mrtilde}
      \max_{\lf nb_n \rf + 1 \leq r \leq n- \lf  nb_n \rf}| \tilde{M}_r^{(d)}| = \Op (n^{d} b_n^{-2}\log n).
    \end{align}
    From \eqref{eq:Mrtilde} and  \eqref{eq:kernel_d}, we have shown the desired result.
  \end{proof}
  
  \begin{lemma}\label{lm:alt_limit}
    Let $v_i$ be $i.i.d.$ $N(0,1)$ random variables, $$\mf R_{k, n}=\sum_{j=0}^{\infty} {\bs \mu}_W(k/n)\psi_j \sigma_H \left(t_{k-j}\right) v_{k-j},$$   and  $R_{k, n, 1}$ is the first element of $\mf R_{k, n}$. 
    Define $$ \Upsilon_{r,n} =  \sum_{i = \lf nb_n \rf + 1}^r R_{i,n,1} - \sum_{j=1}^n \left (\frac{1}{nb_n} \sum_{ i=\lf nb_n \rf+1}^r \bs \mu^{\T}_W(t_i) \mf{M}^{-1}(t_i) K_{b_n}^*\left(t_i-t_j\right) \right) \mf R_{j,n},$$ and $\Upsilon_{r,n} =  \Upsilon_{\lf nb_n \rf, n} $ for $r < \lf nb_n \rf + 1$, $\Upsilon_{r,n} = \Upsilon_{n- \lf nb_n \rf, n} $ for $r > n - \lf nb_n \rf.$ Let \begin{align}
      \Upsilon_{n}(t) = \Upsilon_{\lf nt \rf,n}\Gamma(d + 1)/n^{d + 1/2}.
    \end{align} \par
    Under the conditions of \cref{thm:alt_approx}, for $d \in (0,1/2)$, we have 
    \begin{align}
      \Upsilon_{n}(t) \leadsto U_d(t) ~\text{on $D[0,1]$ with Skorohod topology}.
    \end{align}
    where $U_d(t)$ is a continuous Gaussian process in $C[0,1]$ with covariance function $(0 \leq r \leq s \leq  1)$
    \begin{align}
      \gamma_d(r,s) = \int_{-\infty}^{+\infty}\sigma^2_H(v)\lambda_d(r,v)\lambda_d(s,v)dv ,
    \end{align}
    where for $v \leq u$, $ u\in [0,1]$,
    \begin{align}
      \lambda_d(u,v) =  d\int_{(-v)_+}^{(u-v)_+}t^{d-1}(\check M_W(t+v)-1)dt,
    \end{align}
      and $\check M_W(t) = {\bs \mu}_W^{\T}(t) \mf{M}^{-1}(t){\bs \mu}_W(t)$, $t \in [0,1]$. 

  \end{lemma}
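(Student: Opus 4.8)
The plan is to observe that $\Upsilon_{r,n}$ is a linear functional of the i.i.d.\ $N(0,1)$ array $(v_k)_k$, hence a centered Gaussian variable, so that $\Upsilon_n(\cdot)$ is a centered Gaussian process on $D[0,1]$; weak convergence then reduces to (a) convergence of the covariances to $\gamma_d$ and (b) tightness, as in Step~3 of the proof of \cref{thm:null_dist}. For (a) I would first use $\mf R_{j,n}=\bs\mu_W(t_j)e^{\ast}_{j,n}$ and $R_{j,n,1}=e^{\ast}_{j,n}$ with $e^{\ast}_{j,n}:=\sum_{l\ge0}\psi_l(d)\sigma_H(t_{j-l})v_{j-l}$ (the first coordinate of $\bs\mu_W$ being $1$), which gives $\Upsilon_{r,n}=\sum_j c_{r,n}(j)\,e^{\ast}_{j,n}$ with
\[
c_{r,n}(j)=\mathbf 1\{\lf nb_n\rf< j\le r\}-\frac{1}{nb_n}\sum_{i=\lf nb_n\rf+1}^{r}\bs\mu_W^{\T}(t_i)\mf M^{-1}(t_i)\bs\mu_W(t_j)\,K^{\ast}_{b_n}(t_i-t_j).
\]
Substituting $e^{\ast}_{j,n}=\sum_{k\le j}\psi_{j-k}(d)\sigma_H(t_k)v_k$ and interchanging sums yields $\Upsilon_{r,n}=\sum_k\sigma_H(t_k)v_k\,D_{r,n}(k)$, $D_{r,n}(k)=\sum_{j\ge k}c_{r,n}(j)\psi_{j-k}(d)$, and hence the exact formula $\E(\Upsilon_{r,n}\Upsilon_{s,n})=\sum_k\sigma_H^2(t_k)D_{r,n}(k)D_{s,n}(k)$.

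The crux is the asymptotic $D_{r,n}(k)=n^{d}\Gamma(d+1)^{-1}\lambda_d(t_r,k/n)(1+o(1))$, uniform over $\lf nb_n\rf+1\le r\le n-\lf nb_n\rf$ and $k\le r$. I would split $D_{r,n}(k)$ into the indicator piece $\sum_{j=\max(k,\lf nb_n\rf+1)}^{r}\psi_{j-k}(d)$ and the kernel piece. For the first, the partial-sum asymptotics $\sum_{l=0}^{m}\psi_l(d)=m^{d}/\Gamma(d+1)+o(m^{d})$ together with $\lf nb_n\rf/n\to0$ give $n^{d}\Gamma(d+1)^{-1}((t_r-v)_+^{d}-(-v)_+^{d})(1+o(1))$ for $v=k/n$. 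For the second, Lipschitz continuity of $\bs\mu_W$ and $\mf M^{-1}$ (Assumptions \ref{Ass-W}, \ref{A:beta}) lets me replace $\bs\mu_W^{\T}(t_i)\mf M^{-1}(t_i)\bs\mu_W(t_j)$ by $\check M_W(t_j)$ on the kernel support $|t_i-t_j|\le b_n$ at cost $O(b_n)$, after which $\tfrac1{nb_n}\sum_{i=\lf nb_n\rf+1}^{r}K^{\ast}_{b_n}(t_i-t_j)$ equals $\kappa_*$ for $t_j$ interior to $(0,t_r)$ and lies in boundary bands of width $O(b_n)$; a Riemann-sum approximation then converts $\kappa_*\sum_j\check M_W(t_j)\psi_{j-k}(d)\mathbf 1\{0<t_j<t_r\}$ into $n^{d}\Gamma(d+1)^{-1}\kappa_* d\int_{(-v)_+}^{(t_r-v)_+}t^{d-1}\check M_W(t+v)\,dt$ via $d/\Gamma(d)=\Gamma(d+1)^{-1}$ and the substitution $\tau=t+v$. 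The error terms — from the Lipschitz replacement, the regular-variation remainders, the boundary bands, and the tail $k/n\to-\infty$ where $\lambda_d(t_r,v)=O((-v)^{d-1})\to0$ — are negligible after the $n^{-d-1/2}$ normalization, using in particular the long-range covariance bound $\E(e^{\ast}_{j,n}e^{\ast}_{j',n})=O((1\vee|j-j'|)^{2d-1})$ so that $O(b_n)$ coefficient perturbations contribute only $O(b_n n^{d+1/2})$. A final Riemann-sum pass in $k$ then gives $\E(\Upsilon_n(t)\Upsilon_n(s))=\Gamma^2(d+1)n^{-2d-1}\E(\Upsilon_{\lf nt\rf,n}\Upsilon_{\lf ns\rf,n})\to\int_{-\infty}^{t\wedge s}\sigma_H^2(v)\lambda_d(t,v)\lambda_d(s,v)\,dv=\gamma_d(t,s)$, uniformly in $(s,t)$, whence finite-dimensional convergence follows from the Cramér--Wold device by joint Gaussianity.

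For (b) I would establish the uniform $L^2$ block-increment bound $\|\Upsilon_{r,n}-\Upsilon_{s,n}\|^2=O((1\vee|r-s|)^{2d+1})$ — a fractional-Brownian-type scaling coming from the same analysis of $D_{r,n}(k)-D_{s,n}(k)$ and the covariance bound on $e^{\ast}_{j,n}$, with $|r-s|\le1$ handled by the crude estimate $\|\Upsilon_{r,n}-\Upsilon_{r-1,n}\|=O(1)$ (since $\|e^{\ast}_{j,n}\|=O(1)$ and $\sum_j|c_{r,n}(j)-c_{r-1,n}(j)|=O(1)$). As $\Upsilon_n$ is Gaussian, $\|\cdot\|_4\le3^{1/4}\|\cdot\|_2$, so $\E(|\Upsilon_n(t)-\Upsilon_n(t_1)|^2|\Upsilon_n(t_2)-\Upsilon_n(t)|^2)\le C|t_2-t_1|^{2(2d+1)}$ for $0\le t_1\le t\le t_2\le1$ (the left side vanishing when $t_2-t_1\le1/n$), and since $2(2d+1)>1$, Theorem 13.5 of \cite{billingsley1999convergence} together with $\Upsilon_n(0)=0$ and the continuity of $U_d$ yields tightness; hence $\Upsilon_n\leadsto U_d$ on $D[0,1]$ with the Skorohod topology. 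The main obstacle is part~(a)'s uniform control of $D_{r,n}(k)$: making the regular-variation and Riemann-sum approximations uniform in $r$, and disposing of the kernel boundary bands and the low-index tail, is more delicate than the analogous computation in \cref{thm:null_dist} precisely because of the slowly decaying fractional weights $\psi_j(d)$.
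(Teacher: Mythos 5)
Your proposal is correct and follows essentially the same route as the paper's proof: both rewrite $\Upsilon_{r,n}$ as a linear combination of the independent innovations $v_k$, show the normalized covariance converges to $\gamma_d$ by Riemann/integral approximations (your pointwise asymptotic for $D_{r,n}(k)$ is just a repackaging of the paper's term-by-term approximation of the four covariance pieces with truncation at $N=\lf n^{\alpha}\rf$), invoke Cram\'er--Wold for finite-dimensional convergence, and obtain tightness from the increment bound $\|\Upsilon_{r,n}-\Upsilon_{s,n}\|^2=O(|r-s|^{2d+1})$ together with Theorem 13.5 of \cite{billingsley1999convergence}. The only cosmetic difference is your use of Gaussian moment equivalence to verify the fourth-moment condition where the paper applies Cauchy--Schwarz to second moments, which changes nothing of substance.
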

  
  \begin{proof}
  The limiting distribution of $\Upsilon_{n}(t)$ is derived from the follow procedures. Consider $0 \leq t_1 \leq t_2 \leq 1$. Let $r = \lf nt_1 \rf$, $s = \lf nt_2 \rf$. 
  \begin{enumerate}[label=(\roman*),align=left]
    \item Calculate the covariance $\E \lt \{\frac{\Upsilon_{r,n} \Upsilon_{s,n}}{n^{2d+1}/\Gamma^2(d+1)} \rt\}$  and  establish finite dimensional convergence.
    \item Prove tightness condition.
    \item Show the uniform convergence on $D[0,1]$.
  \end{enumerate}
  
  First, we investigate the terms in $\Upsilon_{r,n}$, when $\lf nb_n\rf + 1 \leq r \leq n- \lf nb_n\rf$,
  \begin{align}
    \Upsilon_{r,n} =  \sum_{i = \lf nb_n \rf + 1}^{r} R_{i,n,1} - \sum_{j=1}^n \mf m_{r,j}^{\T} \mf R_{j,n},\label{eq:decomp}
  \end{align}
  where
  $\mf m_{r,j}^{\T} = \frac{1}{nb_n}\sum_{ i=\lf nb_n \rf+1 }^r{\bs \mu}_W^{\T}(t_i)\mf{M}^{-1}(t_i)K_{b_n}^*\left(t_i-t_j\right)$.\par
  (i) We can write the first term as
  \begin{align}
    \sum_{j=\lf nb_n\rf + 1}^r R_{j,n,1} 
    & = \sum_{j=\lf nb_n\rf + 1}^r \sum_{k = 0}^{\infty} \psi_k \sigma_H(t_{j-k}) v_{j-k}  = \sum_{l=-\infty}^{r} \sigma_H(l/n) v_{l}\sum_{j = (\lf nb_n\rf + 1 -l)_+}^{r-l} \psi_{j} .\label{eq:decomp1}
  \end{align}
  For the second term, 
  let $ \check m_{r,j} = \frac{1}{nb_n}\sum_{ i=\lf nb_n \rf+1 }^r{\bs \mu}_W^{\T}(t_i)\mf{M}^{-1}(t_i)K_{b_n}^*\left(t_i-t_j\right){\bs \mu}_W(t_j)$, we have
  \begin{align}
    \sum_{j=1}^n \mf m_{r, j}^{\T} \mf R_{j,n} 
    & = \sum_{j=1}^n \sum_{k = 0}^{\infty} \psi_k \check m_{r,j} \sigma_H(t_{j-k}) v_{j-k}  = \sum_{l= -\infty}^{n} \sigma_H(l/n) v_{l}\sum_{j = (1 - l)_+ + l}^{n} \psi_{j-l} \check m_{r,j}. \label{eq:decomp2}
  \end{align}

  Consider $\lf n b_n \rf + 1 \leq r \leq s \leq n - \lf n b_n \rf$, the convariance of the Gaussian process is
  \begin{align}
    \E \left \{\frac{\Upsilon_{r,n} \Upsilon_{s,n}}{n^{2d+1}/\Gamma^2(d+1)} \right\} &= \E \left \{ \sum_{j=1}^n \mf m_{r,j}^{\T}\mf R_{j,n} \sum_{j=1}^n \mf m^{\T}_{s,j}\mf R_{j,n} \right\}/(n^{2d+1}/\Gamma^2(d+1))\notag\\ 
    &-  \E \left \{\sum_{j=1}^n \mf m_{r,j}^{\T}\mf R_{j,n}\sum_{ i=\lf nb_n \rf+1}^{s} R_{i,n,1} \right\}/(n^{2d+1}/\Gamma^2(d+1)) \notag\\ 
    &- \E \left \{\sum_{j=1}^n \mf m^{\T}_{s,j}\mf R_{j,n} \sum_{ i=\lf nb_n \rf+1}^{r} R_{i,n,1}\right\}/(n^{2d+1}/\Gamma^2(d+1)) \notag\\ 
    &+  \E \left \{\sum_{ i=\lf nb_n \rf+1}^{r} R_{i,n,1}\sum_{ i=\lf nb_n \rf+1}^{s} R_{i,n,1}\right\}/(n^{2d+1}/\Gamma^2(d+1)) \\ 
    & := I + II + III + IV.
  \end{align}
  
  We first truncate the summands before $l = -N$, $N = \lf n^{\alpha}\rf $, $\alpha \geq 1$. By elementary calculation, we have
  \begin{align}
    I    & = \sum_{l = - N}^{r+ \lf nb_n \rf}\sigma^2_H(l/n) \left(\sum_{i = (1-l)_++l}^{r + \lf nb_n \rf} \psi_{i-l} \check m_{r, i}\right)\left(\sum_{i = (1-l)_+ + l}^{s +  \lf nb_n \rf} \psi_{i-l} \check m_{s, i}\right)/(n^{2d+1}/\Gamma^2(d+1)) + O((N/n)^{2d-1}),
  \end{align}
  where the last equality follows since $\check m_{s,j} = 0$ if $i > s + \nb$.
  
 Next, define \begin{align}
  I^* = \frac{d^2}{n} \sum_{l = -N}^{r}\sigma^2_H(l/n)\int_{(l/n)_+}^{r/n}(t-l/n)^{d-1}\check M_W(t)dt \int_{(l/n)+}^{s/n}(t-l/n)^{d-1}\check M_W(t)dt.
 \end{align}
 We approximate $\sum_{i = (1-l)_++l}^{r + \lf nb_n \rf} \psi_{i-l} \check m_{r, i}$ by considering different regions of $l$, namely $|r-l| \leq nb_n \log n$ and $|r-l| >  nb_n \log n$, $-l \leq nb_n \log n$, and $-l > nb_n \log n$. Then, we have
  \begin{align}
    I = I^* + O(c_n),
  \end{align}
  where $c_n =  b^{d+1}_n\log n  + N/(n^2b_n)+  N b_n/n+ N/n^{d+1} + (N/n)^{2d-1}$.
  Let $$ I^{\diamond} = d^2 \int_{-\infty}^{+\infty}\sigma^2_H(v)\int_{(-v)_+}^{(r/n-v)_+}t^{d-1}\check M_W(t+v)dt \int_{(-v)_+}^{(s/n - v)_+} t^{d-1}\check M_W(t+v )dt dv .$$
   Since $\check M_W(t)$ is nonnegative and Lipschitz continuous on $[0,1]$, we have
  $
    I= I^{\diamond} + O(e_n),
  $
  where
  $e_n = c_n  +  (N/n^2)^{-\frac{1}{d-2}} +  N^{d+1}/n^{d+2}$. The approximation of II-IV follows similarly.
   Now taking $1 \leq  \alpha < \min\left\{\frac{1}{6}, d,  (d+1)^{-1}\right\} + 1$, elementary calculation shows 
  $e_n = o(1)$.
   From the continuity of $\gamma_d$,
     we obtain, for $ 0 \leq t_1, t_2 \leq 1$,
  \begin{align}
    \E\left\{\frac{\Upsilon_{\lf nt_1 \rf,n} \Upsilon_{\lf nt_2 \rf ,n}}{n^{2d+1}/\Gamma^2(d+1)}\right\} \to \gamma_d(t_1, t_2), \quad n \to \infty. \label{eq:alt_cov_lim}
  \end{align}
  Similar to the case under the null hypothesis,
  the finite dimension convergence of the Gaussian process $\Upsilon_n(t)$ to $U_d(t)$ then follows from the Cramer Wold device and equation \eqref{eq:alt_cov_lim}. 
  
  (ii) To prove the tightness  of $\Upsilon_n(t)$, we extend Lemma 2.1 in \cite{taqqu1975weak} to the non-stationary case. We verify equation (13.14) and (13.12) in \cite{billingsley1999convergence}.
  To verify equation (13.14), we need to establish upper bound for 
  \begin{align}
    J_n(t,t_1,t_2) := \E \left| \Upsilon_n(t_2) - \Upsilon_n(t)\right|\left| \Upsilon_n(t) - \Upsilon_n(t_1)\right|,
    \quad 0 \leq t_1 \leq t \leq t_2 \leq 1.
  \end{align}
  By Cauchy-Schwarz inequality, we have 
  \begin{align}
    J_n(t,t_1,t_2) \leq \frac{\Gamma^2(d+1)}{n^{2d+1}} \|\Upsilon_{\lf n t_2 \rf,n}-\Upsilon_{\lf n t \rf,n} \|\|\Upsilon_{\lf n t_1 \rf,n}-\Upsilon_{\lf n t \rf,n} \|.\label{eq:Jn}
  \end{align}
  We proceed to show that, uniformly for $1  \leq r_1 \leq r_2 \leq n$, and $r_1 = \lf n t_1 \rf$, $r_2 = \lf n t_2 \rf$, $t_1, t_2 \in [0,1]$,
  \begin{align}
    \|\Upsilon_{r_2,n} - \Upsilon_{r_1,n}\|^2 = O((r_2 - r_1)^{2d+1}).\label{eq:Upsilon_diff}
  \end{align}
 According to \eqref{eq:decomp}, we have
  \begin{align}
    \|\Upsilon_{r_2,n} - \Upsilon_{r_1,n}\|^2 \leq 2\left(\left\|\sum_{i = r+1}^s R_{i, n,1}\right\|^2 +  \left\|\sum_{j=1}^n (\mf m_{s,j}^{\T} - \mf m_{r,j}^{\T}) \mf R_{j,n}\right\|^2\right).
  \end{align}
  Since $\sum_{l = -\infty}^{r} \left(\sum_{j = r + 1 - l}^{s-l} \psi_j \right)^2 = O((s-r)^{2d+1})$, \eqref{eq:Upsilon_diff} follows from similar calculation in \eqref{eq:tildeG1} and \eqref{eq:tildeG2}.
 
  Then, combining eqution \eqref{eq:Jn} and \eqref{eq:Upsilon_diff},  there exists a sufficiently large positive constant $K$  s.t.
  \begin{align}
    J_n(t,t_1,t_2) \leq K^{2d+1} (t_2 - t)^{d+1/2}(t_1 - t)^{d+1/2} \leq (K t_2 - K t_1)^{2d+1}.
  \end{align}
  Hence, equation (13.14) in \cite{billingsley1999convergence} is satisfied.
  
  We now  verify equation (13.12) in  Theorem 13.5 in \cite{billingsley1999convergence}. 
  For $d < 1/2$, we have
  \begin{align}
    \int_{-\infty}^{\infty}((t - v)_+^d -(-v)_+^d)^2 dv \leq  t^{2d+1} \int_{-\infty}^{\infty}((1+s)_+^d - (s)_+^d)^2 ds = O( t^{2d+1}).\label{eq:fix_int}
  \end{align} 
  Let $\gamma_d^*(t_1, t_2) =:\|U_d(t_2) - U_d(t_1) \|^2. $  Note that $\gamma_d^*(t_1, t_2) = \gamma_d(t_1, t_1) + \gamma_d(t_2, t_2) - 2\gamma_d(t_1, t_2)$.
  Then, since $\check M_W(t)$, $\sigma^2_H(t)$ are bounded and continuous on $[0,1]$ and $(-\infty, 1]$, respectively. For a large constant $C$, it follows that 
  \begin{align}
    \gamma_d^*(t_1, t_2)& = \int_{-\infty}^{\infty} \sigma_H^2(v)(\lambda_d(t_2, v) - \lambda_d(t_1, v))^2 dv\\ 
    & =    \int_{-\infty}^{\infty} \sigma_H^2(v) \left\{(t_2-v)_+^d - (t_1-v)_+^d- d\int_{(t_1 -v)_+}^{(t_2-v)_+}t^{d-1}\check M_W(t+v)dt \right\}^2 dv\\
    &\leq C \int_{-\infty}^{\infty}((\tau-v)_+^d - (-v)_+^d)^2dv = O(\tau^{2d+1}).\label{eq:gammad}
  \end{align}

  For Equation (13.12) in \cite{billingsley1999convergence}, by \eqref{eq:gammad}, we have
  \begin{align}
    \|U_d(t_2) - U_d(t_1)\|^2 = O((t_2 - t_1)^{2d+1}).\label{eq:Ud_diff}
  \end{align}
  Then, by Chebyshev's inequality, it follows that for any $\epsilon>0$,
  \begin{align}
    \lim _{\delta \rightarrow 0} P[U_d:|U_d(1)-U_d(1-\delta)| \geq \epsilon] \leq \lim _{\delta \rightarrow 0} { \delta}^{2d+1}/\epsilon^2=0,
  \end{align}
  which satisfies equation (13.12) in  Theorem 13.5 in \cite{billingsley1999convergence}.
  
  (iii) 
  From (i), we obtain the finite dimensional convergence. From (ii), we've proved that the $\Upsilon_n(t)$ is tight. Kolmogorov-Chentsov theorem in \cite{karatzas1988brownian} and  \eqref{eq:Ud_diff} guarantee that the existence of $U_d(t)$ which has a continuous trajectory. Then, $U_d(t) \in C[0,1] \subset D[0,1].$ According to Theorem 13.5 in \cite{billingsley1999convergence}, we have 
  \begin{align}
    \Upsilon_{n}(t) \leadsto U_d(t) ~ \text{on $D[0,1]$ with Skorohod topology}.
  \end{align}
        \end{proof}
\subsubsection{Proof of \texorpdfstring{\cref{thm:alt_approx}}{Theorem 4.3}}

      Recall that $\mf m_{r,j}^{\T} = \frac{1}{nb_n}\sum_{ i=\lf nb_n \rf+1 }^r{\bs \mu}_W^{\T}(t_i)\mf{M}^{-1}(t_i)K_{b_n}^*\left(t_i-t_j\right)$ as defined in \eqref{eq:GA1}. Define
      \begin{align}
      G_d^*(r)=-\sum_{j=1}^n \mf m_{r,j}^{\T} \mf x_je_j^{(d)}+\sum_{ i=\lf nb_n \rf+1}^re_i^{(d)}.\label{eq:Gd}
      \end{align}
      It follows from \cref{lm:alt1} that
      \begin{align}
        \max_{\lf nb_n \rf + 1 \leq r \leq n- \lf  nb_n \rf }\left|G_d^*(r)-\sum_{ i=\lf nb_n \rf+1}^r\tilde e_i^{(d)}\right| 
        = \Op (\alpha_n),\label{eq:e2G}
      \end{align} 
      where $\alpha_n $ is of smaller order of $n^{d+1/2}$.\    According to \cref{thm:fixlocal}, similar to the proof of \eqref{eq:GA1} using the summation-by-parts formula, there exists
      a series of $i.i.d.$ $N(0,1)$'s $\{v_i\}_{i\in \mathbb Z}$ possibly on a richer probability space,
      such that 
      \begin{align}
          \max_{\lf nb_n \rf + 1 \leq r \leq n- \lf  nb_n \rf }\left|G_d^*(r)- \Upsilon_{r,n}\right|  = \Op(\sqrt{n} (\log n)^d + n^{1+\alpha_0(d-1/2)}),\label{eq:G2U}
      \end{align}
      where \begin{align}\label{eq:UpsilonR}
        \Upsilon_{r,n} =  \sum_{i = \lf nb_n \rf + 1}^r R_{i,n,1} - \sum_{j=1}^n \mf m_{r,j}^{\T} \mf R_{j,n},
      \end{align}
      and $\mf R_{k, n}=\sum_{j=0}^{\infty} {\bs \mu}_W(k/n)\psi_j \sigma\left(t_{k-j}\right) v_{k-j}$, $R_{k, n, 1}$ is the first element of $\mf R_{k, n}$.
      Since by \cref{lm:alt_limit} $\|\Upsilon_{n,n}\|$ is of order $n^{d+1/2}$, we have
      \begin{align}
        \left| T_{n,b_n} ^{(d)}- \Xi_{n,b_n}\right| &\leq \max_{\lf nb_n \rf + 1 \leq r \leq n- \lf  nb_n \rf }\left|\left(\sum_{ i=\lf nb_n \rf+1}^r\tilde e_i^{(d)}\right)^2-\Upsilon^2_{r,n}\right|/n\\
        & \leq  \max_{\lf nb_n \rf + 1 \leq r \leq n- \lf  nb_n \rf }\left|\sum_{ i=\lf nb_n \rf+1}^r\tilde e_i^{(d)}-\Upsilon_{r,n} \right|^2/n +  2 \max_{\lf nb_n \rf + 1 \leq r \leq n- \lf  nb_n \rf }  \left|\sum_{ i=\lf nb_n \rf+1}^r\tilde e_i^{(d)}-\Upsilon_{r,n}\right||\Upsilon_{r,n}|/n\\ 
        &= \Op\left\{n^{d-1/2}\left(\alpha_n +\sqrt{n} (\log n)^d + n^{1+\alpha_0(d-1/2)}\right)\right\} = \op(n^{2d}).
      \end{align}
      The second part of the proof follows from \cref{lm:alt_limit} and the continuous mapping theorem. \hfill $\Box$

      \subsection{Proof of \texorpdfstring{\cref{thm:local}}{Theorem 4.4}}

        As a counterpart of \eqref{eq:Gd}, define
        \begin{align}
        G_{d_n}^*(r)=-\sum_{j=1}^n \mf m_{r,j}^{\T} \mf x_je_j^{(d_n)}+\sum_{ i=\lf nb_n \rf+1}^re_i^{(d_n)}.
        \end{align}
        It follows from \cref{lm:alt1} that
        \begin{align}
          \max_{\lf nb_n \rf + 1 \leq r \leq n- \lf  nb_n \rf }\left|G_{d_n}^*(r)-\sum_{ i=\lf nb_n \rf+1}^r\tilde e_i^{(d_n)}\right| 
          = \Op (b_n^{-2}\log n  + (nb_n)^{1/2}\log n + n b_n^3),
        \end{align} 
      which is of smaller order of $n^{1/2}$  when $nb_n^4/(\log n)^2 \to \infty$, $nb_n^6 \to 0$.
       According to \cref{thm:fixlocal}, using the summation-by-parts formula, there exists
       a series of $i.i.d.$.  Gaussian vectors namely $\{\mf V_i\}_{i\in \mathbb Z}$ possibly on a richer probability space,
       such that 
       \begin{align}
           \max_{\lf nb_n \rf + 1 \leq r \leq n- \lf  nb_n \rf }\left| G_{d_n}^*(r)-  \Upsilon^{\circ}_{r,n}\right|  = \op(\sqrt{n}),
       \end{align}
      where \begin{align}
         \Upsilon^{\circ}_{r,n} =  \sum_{i = \lf nb_n \rf + 1}^r \tilde R_{i,n,1} - \sum_{j=1}^n \mf m_{r,j}^{\T} \mf{\tilde R}_{j,n},~ \mf{\tilde R}_{i,n} &= \sum_{j = 1}^{\infty} {\bs \mu}_W(t_i) \psi_{j}\sigma_H(t_{i-j}) V_{i - j ,1} + {\bs \Sigma}^{1/2}(t_i) \mf V_i:= \mf S_{i, n}  + {\bs \Sigma}^{1/2}(t_i) \mf V_i.
       \end{align}
       with $\tilde R_{i,n, 1} $ and $V_{i,1} $ being the first element of $\mf{\tilde R}_{i,n}$ and $\mf V_{i}$. Extra define  $\Upsilon^{\circ}_{r,n} =  \Upsilon^{\circ}_{\lf nb_n \rf, n} $ for $r < \lf nb_n \rf + 1$, $\Upsilon^{\circ}_{r,n} = \Upsilon^{\circ}_{n- \lf nb_n \rf, n} $ for $r > n - \lf nb_n \rf.$
       Then, let 
       \begin{align}
         \Upsilon^{\circ}_{n}(t) = n^{-1/2}  \Upsilon^{\circ}_{\lf nt \rf,n}.
       \end{align}
       Similar to \cref{lm:alt_limit}, the limiting distribution of $\Upsilon^{\circ}_{n}(t)$ is derived from the follow procedures. Consider $0 \leq t_1 \leq t_2 \leq 1$. Let $r = \lf nt_1 \rf$, $s = \lf nt_2 \rf$. 
       \begin{enumerate}[label=(\alph*),align=left]
         \item Calculate the covariance $n^{-1}\E\{ \Upsilon^{\circ}_{r,n} \Upsilon^{\circ}_{s,n}\}$ and establish finite dimiensional convergence of $ \Upsilon^{\circ}_{n}(t)$.
         \item Prove tightness condition of $ \Upsilon^{\circ}_{n}(t)$.
       \end{enumerate}
      Then, by  (a) and (b),  we have
      \begin{align}
       \Upsilon^{\circ}_n(t) \leadsto U^{\circ}(t) ~ \text{on $D[0,1]$ with Skorohod topology.}
      \end{align}
      \par
      \textbf{Step (a)}.
      Let $S_{k, n, 1}$ be  the first element of $\mf S_{k, n}$.
      Observe that 
      \begin{align}
      \Upsilon^{\circ}_{r,n} = \tilde G^*(r) + \check \Upsilon_{r,n},
      \end{align}
      where 
      \begin{align}
        \tilde G^*(r) = \sum_{i = \lf nb_n \rf + 1}^r \sigma_H(t_i) V_{i,1} - \sum_{j=1}^n \mf m_{r,j}^{\T} {\bs \Sigma}^{1/2}(t_j)\mf V_j, \quad \check \Upsilon_{r,n} =  \sum_{i = \lf nb_n \rf + 1}^r  S_{i,n,1} - \sum_{j=1}^n \mf m_{r,j}^{\T} \mf{S}_{j,n}.
      \end{align}
      The convariance of the Gaussian process is
       \begin{align}
         n^{-1}\E \left \{\Upsilon^{\circ}_{r,n} \Upsilon^{\circ}_{s,n} \right\} &= n^{-1}\E \left\{(\tilde G^{*}(r) + \check \Upsilon_{r,n})(\tilde G^{*}(s) + \check \Upsilon_{s,n}) \right\}.
         \label{eq:local_cov}
       \end{align}
      \par
       \textbf{Step a.1: Compute $\E\{\tilde G^*(s)\tilde G^*(r) \}/n$.}
      According to \eqref{eq:lim_cov1} of the proof of \cref{thm:null_dist}, we have 
      \begin{align}
      \max_{1 \leq  r \leq  s \leq n} \left|\E\{\tilde G^*(s)\tilde G^*(r) \}/n - \gamma(t_1, t_2)\right| = O\left(b_n+\frac{1}{n b_n}\right).\label{eq:stepa1}
      \end{align}
      \par
      \textbf{Step a.2: Compute $\E\{ \check \Upsilon_{r,n}\check \Upsilon_{s,n}\}/n$.}
      Following similar arguments in \cref{lm:alt_limit}
      and some tedious calculation, we have
      \begin{align}
      \sup_{0 \leq t_1 \leq t_2 \leq 1}\left|n^{-1} \E \left \{\check \Upsilon_{\lf nt_1 \rf ,n} \check  \Upsilon_{\lf nt_2 \rf,n} \right\} - \check \gamma(t_1, t_2) \right|=O((\log n)^{-1/2}).\label{eq:stepa2}
      \end{align}
     
      \textbf{Step a.3: Compute $\E\{ \check \Upsilon_{r,n}\tilde G^*(s)\}/n$ and  $\E\{ \check \Upsilon_{s,n}\tilde G^*(r)\}/n$.} Similar to \eqref{eq:lim_cov1} of the proof of \cref{thm:null_dist}
      and some tedious calculation, we have
        \begin{align}
        \sup_{0 \leq t_1 \leq t_2 \leq 1} \left| n^{-1}\E \left \{\check \Upsilon_{\lf nt_1 \rf,n} \tilde G^*(\lf nt_2 \rf) \right\} -  \tilde \gamma(t_1, t_2) \right| = O((\log n)^{-1/2}).
        \end{align}
        Similarly, we have 
        \begin{align}
        \sup_{0 \leq t_1 \leq t_2 \leq 1} \left| n^{-1}\E \left \{\check \Upsilon_{\lf nt_2 \rf,n} \tilde G^*(\lf nt_1 \rf) \right\} -  \tilde \gamma(t_1, t_2) \right| = O((\log n)^{-1/2}).\label{eq:stepa3}
        \end{align}
       Combining \eqref{eq:local_cov}, \eqref{eq:stepa1}, \eqref{eq:stepa2} and \eqref{eq:stepa3},  we have
       \begin{align}
         \sup_{0 \leq t_1 \leq t_2 \leq 1} \left| n^{-1}\E \left \{\Upsilon^{\circ}_{\lf nt_1 \rf,n} \Upsilon^{\circ}_{\lf nt_2 \rf,n} \right\}  -\gamma^{\circ} (t_1, t_2) \right|= O((\log n)^{-1/2}).
       \end{align}
      The finite dimensional convergence of $\Upsilon^{\circ}(t)$  to $U^{\circ}(t)$ then follows from Cramer-Wold device.
      
      \textbf{Step (b)}. 
      Since $\sum_{l=1}^{\infty}((s-r-1+l)^{d_n} - l^{d_n})^2= o(s-r)$, (13.4) of Theorem 13.5 in \cite{billingsley1999convergence} with the $\alpha = \beta = 1$ case follows from \eqref{eq:null_tightness}, and  calculations similar to step (ii) of the proof of  \cref{lm:alt_limit}. 
  Equation (13.2) of \cite{billingsley1999convergence} follows from the continuity of the covariance structure of $U^{\circ}(t)$. Therefore, by Theorem 13.5 in \cite{billingsley1999convergence}, we haven shown the tightness of $\Upsilon^{\circ}(t)$.\hfill $\Box$
 
\section{Theoretical properties of bootstrap tests}\label{bootstrap}

\subsection{Proof of \texorpdfstring{\cref{thm:bootstrap_null}}{Theorem 5.1}}
Proof of (i) Under \cref{ass:lrv}
      the proof follows from  similar but simpler arguments of
      \cref{thm:bootstrapA}. We omit it for brevity.\\
Proof of (ii). 
Given (ii) of \cref{thm:null_dist}, it suffices to show that on a possibly richer probability space,
      \begin{align}
        |\tilde T_n - T_n| = \op(b_n).\label{degnull:step1}
      \end{align}
    Let $\tilde G_r$ be the bootstrap statistic in one iteration defined in \eqref{eq:Gk} in one simulation round. To prove \eqref{degnull:step1}, it's sufficient to show that on a possibly richer probability space,
      \begin{align}
        \sup_{\nb + 1 \leq r \leq n-\nb} \left| \left(\sum_{i=\nb +1}^r \tilde e_i - \tilde G_r\right)\right| = \op(\sqrt{nb_n} / \log n).\label{deg:step1.1}
      \end{align}
      and 
      \begin{align}
        \sup_{\nb + 1 \leq r \leq n-\nb} |\tilde G_r| = \Op(\sqrt{nb_n\log n}). \label{deg:step1.2}
      \end{align}
      Then, the theorem follows from continuous mapping theorem. 
    Recall in Step 2 of \cref{thm:null_dist}, 
    \begin{align}\tilde G^*(r)=-\sum_{j=1}^n\left(\frac{1}{nb_n}\sum_{ i=\lf nb_n \rf+1 }^r{\bs \mu}_W^{\T}(t_i)\mf{M}^{-1}(t_i)K_{b_n}^*\left(t_i-t_j\right)\right) {\bs \Sigma}^{1/2}(t_j)\mf{V}_j+\sum_{ i=\lf nb_n \rf+1}^r\sigma_{H}(t_i)V_{i,1}.
    \end{align}
    Recall in  \eqref{deg:step1.1-1}, we have
    \begin{align}
      \sup_{\nb + 1 \leq r \leq n-\nb} \left|\sum_{i=\nb +1}^r \tilde e_i - \tilde G^*(r)\right| = \op(\sqrt{nb_n}/\log n).\label{deg:step1.1prime}
    \end{align}
    Following the proof of \eqref{eq:totalrate}, we have
    \begin{align}
      \sup_{\nb + 1 \leq r \leq n-\nb} \left|n^{-1/2}(\tilde G_r - \tilde G^*(r)) \right|  
      &= \Op\left(q_n\left( r_n + g_n^{1/2} \right)+ n^{-1/2}b_n^{-3/2}\right)\\ 
      &= \op((b_n^{1/2}/\log n)),\label{deg:step1.1-2}
    \end{align}
    where $r_n  = n^{-1/2} b_n^{-3/4} + b_n^2$, $q_n$ is a sequence goes to infinity at an arbitrarily slow rate, $g_n$ is related to the convergence rate of \cref{ass:lrv} which is $o(b_n/\log^2 n)$.
    Therefore, \cref{deg:step1.1} is proved by \eqref{deg:step1.1prime} and \eqref{deg:step1.1-2}.
Recall that in \eqref{deg:step1.2-1}, we have
\begin{align}
    \|\tilde G^*(r) - \tilde G^{\circ}(r) \|^2 = O(nb_n^3),
\end{align}
where
$
  \tilde G^{\circ}(r) = \sum_{i=\nb+1}^r \sigma_H(t_i)V_{i,1} - \sum_{j=1}^n \sum_{i=\nb+1}^r \frac{K^*_{b_n}(t_i - t_j)}{nb_n} \sigma_H(t_j)V_{j,1},
$
and for $\lf nb_n\rf+1\leq r\leq n-\lf nb_n\rf$,
\begin{align}
  \| \tilde G^{\circ}(r) \|^2 = O(nb_n). 
\end{align}
Since $\tilde G^*(r) - \tilde G^{\circ}(r)$ and $\tilde G^{\circ}(r)$ are Gaussian processes, \cref{deg:step1.2} is proved. \hfill $\Box$

  \begin{remark}\label{deg:relate}
    The bootstrap consistency of V/S-type test follows similarly.  In the following remark, we take K/S-type test as an example, the result of R/S-type test follows similarly. Let 
    \begin{align}
        Y = (|\tilde G^*(3\nb+1), \cdots, \tilde G^*(n - \nb)|)/\sqrt{nb_n},
    \end{align}
    and 
    \begin{align}
        \widetilde{\mathrm{KS}}_n = \max_{3\nb +1 \leq k \leq n - \nb}|\tilde G_k|,\quad \mathrm{KS}_n = \max_{3\nb +1 \leq k \leq n - \nb}|\tilde G_k|,\quad \widetilde{\mathrm{KS}}^*_n = \max_{3\nb +1 \leq k \leq n - \nb}|\tilde G^*_k|.
    \end{align}
    Since by elementary calculation using \eqref{deg:step1.2-1}, \eqref{deg:coef} and degeneracy there exist positive constants $c_1$ and $c_2$ such that for $3\lf nb_n\rf+1\leq r\leq n-\lf nb_n\rf$,
    $ c_1 \leq \|\tilde G^*(r) \|^2/(nb_n)\leq c_2$, by Lemma C.1 in \cite{Dette2021ConfidenceSF}, we have 
    \begin{align}
        \sup_{x\in \mathbb{R}} \left|P(|Y|_{\infty} > x)  - P(\mathrm{KS}_n/\sqrt{nb_n} > x)\right| &\leq  \sup_{x\in \mathbb{R}} \left|P(|Y|_{\infty} > x) - P(\widetilde{\mathrm{KS}}_n/\sqrt{nb_n} > x)\right|\\  & + P(|\widetilde{\mathrm{KS}}_n - \mathrm{KS}_n| > \sqrt{nb_n} \delta) + c \delta \sqrt{\max\{1,\log (n/\delta)\}},
    \end{align}
    where the right-hand side converges to $0$ as $n \to \infty$, if we let $\delta = \log^{-1} n$, and use \eqref{deg:step1.1} and \eqref{deg:step1.1-2}.
  \end{remark}

       \subsection{Proof of \texorpdfstring{\cref{thm:bootstrapA}}{Theorem 5.2}}
Recall that  $\mf m_{r,j}^{\T} = \frac{1}{nb_n}\sum_{ i=\lf nb_n \rf+1 }^r{\bs \mu}_W^{\T}(t_i)\mf{M}^{-1}(t_i)K_{b_n}^*\left(t_i-t_j\right)$.
        Define 
        $ \tilde T^*_{n,m} =  \sum_{r=\lf nb_n \rf + 1}^{ n - \lf nb_n\rf} (\tilde G^*_{r,d})^2 /(n(n - 2\lf nb_n\rf),$ where
        $$ \tilde G^*_{r,d}=-\sum_{j=1}^n \mf m_{r,j}^{\T}  {\bs \Sigma}_d^{1/2}(t_j)\mf{V}_j + \sum_{ i=\lf nb_n \rf+1}^r  \sigma_{Hd}(t_i)V_{i,1}, $$
        with ${\bs \Sigma}_d(t) = \kappa_2(d)\sigma_H^2(t) {\bs \mu}_W(t)\bs \mu^{\T}_W(t)$, $\sigma_{Hd}(t) = \left(\bs \Sigma_d(t)\right)_{(1,1)}$.
      
        The proof consists of three parts.\par
        (a) Obtain the limiting distribution of $n^{-1/2}\tilde G_{r,d}^* $, $\lf nb_n\rf + 1 \leq r \leq n - \lf nb_n\rf$.\par
      
        (b) Show that conditional on data, $m^{-d}n^{-1/2}\tilde G_{r,d} $ and  $n^{-1/2}\tilde G_{r,d}^* $ converge to the same limit, uniformly for $\lf nb_n\rf + 1 \leq r \leq n - \lf nb_n\rf$.\par
        (c) Derive the limiting distribution of $\tilde T_{n}$.
      \par 
       \textbf{Step  (a)}. By similar arguments as \eqref{eq:nullI}, \eqref{eq:nullII}, \eqref{eq:nullII},  and \eqref{eq:nullIV} in the proof of \cref{thm:null_dist}, by Cramer-Wold device, we have the finite dimensional convergence. The tightness follows similarly as in the final part of the proof of \cref{thm:null_dist}. Then, we have $n^{-1/2} \tilde G_{r,d}^* \leadsto \tilde U_d(t)$ on $D[0,1]$ with Skorohod topology.
        \par
        \par
        \textbf{Step of (b)}.
         Let $\mf 1$ denote the indicator function. Let $C$ below denote a sufficiently large constant in the following context. We construct sets independent of $\{\mf{V}_j\}_{j=1}^n$ as follows. Let $q_n$ be a sequence of real numbers so that $q_n \to \infty$ arbitrarily slow.
        Define
        \begin{align}
          W_n := \{\sup_{t \in \I}|\hat {\bs \Sigma}_d^{1/2} (t) m^{-d} - {\bs \Sigma}_d^{1/2} (t)| \leq g^{1/2}_{1,n} q_n\},  \quad
          H_n := \{\sup_{t \in \B}\rho(\hat{\mf M}^{-1} (t) - \mf M^{-1} (t)) \leq r_n q_n\},
        \end{align}
       where $\mathcal{I} = [\gamma_n, 1-\gamma_n] \subset (0,1)$, $\gamma_{n}=\tau_{n}+(m+1) / n$, $\B = [\eta_n , 1-\eta_n]$, $ g_{1, n} = o(1)$ related to the convergence rate of \cref{ass:lrv},  $r_n = n^{-1/2}\eta_n^{-3/4} + \eta_n^2$.
        
        Under \cref{ass:lrv}, 
by Gershgorin’s circle theorem and  Corollary 1 in \cite{yu2015useful}, we obtain
        \begin{align}
          \lim_{n \to \infty} \pp(W_n) = 1, \quad \lim_{n \to \infty} \pp(H_n) = 1.\label{eq:WnHn}
        \end{align}
        Observe that
        \begin{align}
          (m^{-d}\tilde G_{r,d} - \tilde G_{r,d}^*) \mf  1(W_n \cap H_n) & = \sum_{j=1}^n \left\{\mf m_{r,j}^{\T}  {\bs \Sigma}_d^{1/2}(t_j)  - \hat {\mf m}_{r,j}^{\T} m^{-d} \hat {\bs \Sigma}_d^{1/2}(t_j)\right\} 1(W_n \cap H_n)\mf V_j \\ & + \sum_{i=1}^r(\hat \sigma_{Hd}(t_j) m^{-d} - \sigma_{Hd}(t_j)) 1(W_n \cap H_n) V_{i,1}\\
        &= \sum_{j=1}^n \left\{(\mf m_{r,j}^{\T}   - \hat {\mf m}_{r,j}^{\T}) m^{-d} \hat {\bs \Sigma}_d^{1/2}(t_j)\right\} 1(W_n \cap H_n)\mf V_j \\ & +
        \left\{ \sum_{j=1}^n \mf m_{r,j}^{\T}  \lt({\bs \Sigma}_d^{1/2}(t_j)  -  m^{-d} \hat {\bs \Sigma}_d^{1/2}(t_j)\rt) 1(W_n \cap H_n)\mf V_j \right.\\ & +
        \left.\sum_{i=1}^r(\hat \sigma_{Hd}(t_j) m^{-d} - \sigma_{Hd}(t_j)) 1(W_n \cap H_n) V_{i,1}\right\}
        := J_1 + J_2,
        \end{align}
      where $J_1$ and $J_2$ are defined in the obvious way.
         Let $\mathbb{T}_n = [\lf n\gamma_n \rf + 1, n - \lf n\gamma_n \rf] \cap [\lf n\eta_n \rf + 1, n - \lf n\eta_n \rf] $,  and consider $r \in \mathbb{T}_n$. 
         First, we show calculate $J_1$ by the following two steps. Define 
        \begin{align}
          \tilde{\mf{m}}_{r,j}^{\T} = \frac{1}{nb_n}\sum_{ i=\lf nb_n \rf+1 }^r \mf x^{\T}_i \mf{M}^{-1}(t_i)K_{b_n}^*\left(t_i-t_j\right).
      \end{align}
      (1) We shall show that
      \begin{align}
      \left\|\max_{r \in \mathbb{T}_n}\left| \sum_{j=1}^n   \left(\hat {\mf m}_{r,j}^{\T} - \tilde{\mf m}_{r,j}^{\T}\right)m^{-d} \hat{\bs \Sigma}_d^{1/2}(t_j) \mf 1( W_n \cap H_n) \mf V_j \right| \right\| = O(n^{1/2}r_nq_n).\label{eq:J12}
      \end{align}
      (2) Then, we shall derive 
      \begin{align}
      \left\| \max_{r \in \mathbb{T}_n}\left|\sum_{j=1}^n   \left(\tilde {\mf m}_{r,j}^{\T} - \mf m_{r,j}^{\T}\right)m^{-d}\hat{\bs \Sigma}_d^{1/2}(t_j)\mf 1( W_n \cap H_n) \mf V_j\right| \right\| = O(b_n^{-3/2}).\label{eq:J13}
      \end{align}
      Proof of \eqref{eq:J12}. Observe that the left hand side of \eqref{eq:J12} is a martingale
       w.r.t $\mathcal{G}_r = \{\FF_n, \{V_{i,1}\}_{i=1}^r\}$. By Doob's inequality and Burkholder inequality, for a sufficiently large constant $C$, we have
       \begin{align}
           &\left\|\max_{r \in \mathbb{T}_n}\left| \sum_{j=1}^n   \left(\hat {\mf m}_{r,j}^{\T} - \tilde{\mf m}_{r,j}^{\T}\right)m^{-d} \hat{\bs \Sigma}_d^{1/2}(t_j) \mf 1( W_n \cap H_n) \mf V_j \right| \right\|\\ 
           &=  \left\|\max_{r \in \mathbb{T}_n}\left|\frac{1}{nb_n}\sum_{ i=\lf nb_n \rf+1 }^r \mf x^{\T}_i (\hat {\mf M}^{-1}(t_i) - \mf{M}^{-1}(t_i))\sum_{j=1}^n  K_{b_n}^*\left(t_i-t_j\right)   m^{-d} \hat{\bs \Sigma}_d^{1/2}(t_j) \mf 1( W_n \cap H_n) \mf V_j \right| \right\|\\
           & \leq C \left\|\left|\frac{1}{nb_n}\sum_{ i=\lf nb_n \rf+1 }^{n - \nb} \mf x^{\T}_i (\hat {\mf M}^{-1}(t_i) - \mf{M}^{-1}(t_i))\sum_{j=1}^n  K_{b_n}^*\left(t_i-t_j\right)   m^{-d} \hat{\bs \Sigma}_d^{1/2}(t_j) \mf 1( W_n \cap H_n) \mf V_j \right| \right\|\\
           & \leq C \left\{\sum_{j=1}^n \left\|\left(\hat {\mf m}_{n,j}^{\T} - \tilde{\mf m}_{n,j}^{\T}\right)m^{-d} \hat{\bs \Sigma}^{1/2}_d(t_j)\mf 1( W_n \cap H_n) \right\|^2\right\}^{1/2}\\
           & = O(n^{1/2}r_nq_n).
       \end{align}
      Proof of \eqref{eq:J13}.
      Let $\boldsymbol{\mu}_{b_{n}}^{\dagger}(t) =  \frac{1}{nb_n}\sum_{j=1}^n   K^*_{b_n}(t - t_j)m^{-d} \hat{\bs \Sigma}_d^{1/2}(t_j) \mf 1( W_n \cap H_n)\mf V_j $.
      By summation-by-parts formula, 
      \begin{align}
      \sum_{j=1}^n   \left(\tilde {\mf m}_{r,j}^{\T} - \mf m_{r,j}^{\T}\right)m^{-d} \hat{\bs \Sigma}_d^{1/2}(t_j)\mf 1( W_n \cap H_n) \mf V_j  
      & =  \sum_{i=1}^r (\mf x_i - \bs \mu_i)^{\T} \mf M^{-1}(t_i)\boldsymbol{\mu}_{b_{n}}^{\dagger}(r/n) \\
      & = \sum_{i=1}^r (\mf x_i - \bs \mu_i)^{\T}\mf M^{-1}(t_i) \boldsymbol{\mu}_{b_{n}}^{\dagger}(r/n)\\& -\sum_{i=1}^r  (\boldsymbol{\mu}_{b_{n}}^{\dagger}(t_i) -\boldsymbol{\mu}_{b_{n}}^{\dagger}(t_{i-1}) )\sum_{k=1}^{r-1}(\mf x_i - \mu_i)^{\T} \mf M^{-1}(t_i)\\
      & := Z_1 + Z_2
      \end{align}
      Under condition \ref{A:W} and \ref{A:Mt}, similar to proof of \eqref{eq:J12}, by Doob's inequality and Cauchy inequality, we have 
      \begin{align}
      \left\|\max_{r \in \mathbb{T}_n}\left|Z_1 \right|\right\| &\leq  \left\|\left|\sum_{i=1}^{n-\nb} (\mf x_i - \mu_i)^{\T}\mf M^{-1}(t_i)\right| \right\|_4 \left\|\boldsymbol{\mu}_{b_{n}}^{\dagger}((n-\nb)/n) \right\|_4 = O(b_n^{-1/2}),\label{eq:Z1}
      \end{align}
      and  by Cauchy inequality,  we have
      \begin{align}
      \left\|\max_{r \in \mathbb{T}_n}\left|Z_2 \right|\right\| &\leq \left\|\max_{r \in \mathbb{T}_n}\left|\sum_{i=1}^r (\mf x_i - \mu_i)^{\T}\mf M^{-1}(t_i)\right| \right\|_4 \left\|\max_{r \in \mathbb{T}_n}\sum_{i=1}^r \left|\boldsymbol{\mu}_{b_{n}}^{\dagger}(t_i) -\boldsymbol{\mu}_{b_{n}}^{\dagger}(t_{i-1}) \right|\right\|_4 = O(b_n^{-3/2}),\label{eq:Z2}
      \end{align}
      where the last equality follows from  Doob's inequality, triangle and Burkholder inequality.
      By \eqref{eq:Z1} and \eqref{eq:Z2}, \eqref{eq:J13} is proved.
      Similar to the calculation of $J_1$, we have 
      \begin{align}
         \| \max_{r \in \mathbb{T}_n}|J_2|\|  = O(n^{1/2}g_{1,n} q_n).\label{eq:J2}
      \end{align}
      Therefore, combining  \eqref{eq:J12}, \eqref{eq:J13},  and\eqref{eq:J2}, by triangle inequality, 
      \begin{align}
      \left\|\max_{r \in \mathbb{T}_n}\left|  n^{-1/2}(m^{-d}\tilde G_{r,d} - \tilde G_{r,d}^*) \mf  1(W_n \cap H_n) \right|\right\| = O(g_{1, n}^{1/2}q_n + r_n q_n + n^{-1/2}b_n^{-3/2} ).
      \end{align}

      By Proposition A.1 in \cite{wu2018gradient}, since \eqref{eq:WnHn}, we have
      \begin{align}
      \max_{r \in \mathbb{T}_n}\left|  n^{-1/2}(m^{-d}\tilde G_{r,d} - \tilde G_{r,d}^*) \right| = \Op(g_{1, n}^{1/2}q_n + r_n q_n + n^{-1/2}b_n^{-3/2} ).\label{eq:totalrate}
      \end{align}
      \par
      \par
      \textbf{Step (c)}
      Under the bandwidth condition $nb_n^3 \to \infty$,
      by Step (a) and (b), $n^{-1/2}m^{-d} \tilde G_{r,d} \leadsto \tilde U_d(t)$ on $D[0,1]$ with Skorohod topology.
      Therefore, by continuous mapping theorem, we have 
      \begin{align}
      m^{-2d} \tilde T_{n} \Rightarrow \int_0^1 \tilde U^2_d(t) dt.
      \end{align}
      \par

Under \cref{ass:lrv}, following similar arguments in the proof of result (i) of \cref{thm:bootstrapA}, we have result (ii).\hfill $\Box$
      \subsection{Proof of \texorpdfstring{\cref{deg:alt}}{Proposition 5.1}}

    Proof of (i).
     Recall  $\tilde G_{r,d}$ is the bootstrap statistic \eqref{eq:Gk} in one iteration under the fixed alternative.  It's sufficient to show that,
       \begin{align}
        \lim_{n \to \infty} P\left(\left|\sum_{i=\nb +1}^r \tilde e_i^{(d)}\right| >  \left|\tilde G_{r,d} \right|\text{for $r = 
    \nb +1, \cdots n - \nb$}\right) = 1.
      \end{align}
      Similar to the steps in the proof of (ii) of \cref{thm:bootstrap_null}, substituting $\sigma_H(\cdot)$ by $\hat \sigma_d(\cdot)$, using law of total expectation, we have 
      \begin{align}
       \max_{\nb +1 \leq r \leq n - \nb} \|\tilde G_{r,d}\|^2 = O(nb_n m^{2d}).\label{deg:step1}
      \end{align}
      Similar to \cref{lm:consistency}, for the time varying trend model, 
      \begin{equation} 
        \sup_{t \in \mathscr{T}}\left|\tilde{{\beta_1}}^{(d)}(t)-{\beta_1}(t)-\frac{1}{nb_n}\sum_{i=1}^n e_{i}^{(d)} K_{b_{n}}^{*}(i / n-t)\right| = \Op\left(\rho^*_n/(nb_n) \right),
        \label{eq:expansion_d}
      \end{equation}
      where $\mathscr{T} = [b_n,1-b_n]$, $\rho^*_n = (nb_n)^{d-1/2} \log n \mf 1(0 \leq d \leq 1/26) + (nb_n)^{d-1/2}b_n^{-1/2}\mf 1(1/26 < d < 1/2)+ b_n^2$.
      Then, it follows that 
      \begin{align}
       &\max_{\nb + 1 \leq r \leq n-\nb} \left| \sum_{i=\nb+1}^r \tilde e_i^{(d)} -  \sum_{i=\nb+1}^r\left( e_i^{(d)}- \frac{1}{nb_n} \sum_{j=1}^n e_{j}^{(d)} K_{b_{n}}^{*}(i / n-j/n) \right) \right|\\ & = \Op(\rho_n^*/b_n) = \op((nb_n)^{1/2+d}).\label{deg:step2.1}
      \end{align}
      A careful investigation of \cref{lm:alt_gaussian} yields when $nb_n^3 \to \infty$, 
      \begin{align}
        \max_{\nb + 1 \leq r \leq n-\nb}\max_{r - \nb \leq s \leq r + \nb } \left| \sum_{k=r-\nb}^{s} (e_k^{(d)} - R_{k,n,1})\right| &= \Op(n^{\alpha_0(d-1/2) + 1}b_n + n^{\alpha_0/4 + d + q} b_n^d)\\ &= \op((nb_n)^{d+1/2}),\label{deg:alt_approx}
      \end{align}
      where $\alpha_0 \in (1, 4/3)$, $q>0$ can be arbitrarily small. Observe that by \eqref{deg:coef}, uniformly for $3\nb + 1 \leq r \leq n - \nb$, 
      \begin{align}
        &\sum_{i=\nb+1}^r e_i^{(d)}-\frac{1}{nb_n}\sum_{j=1}^n \sum_{i=\nb+1}^r e_{j}^{(d)} K_{b_{n}}^{*}(i / n-j/n)\\  
        &= \sum_{j = \nb + 1}^{2\nb}\left(\int_{-1}^{\frac{j-\nb}{nb_n}} K^*(t)dt\right) e_j^{(d)} + \sum_{j = r - \nb + 1}^{r}\left(\int_{\frac{r-j}{nb_n}}^{1} K^*(t)dt \right) e_j^{(d)} \\ 
        &- \sum_{j = 1}^{\nb}\left(\int_{\frac{j-\nb}{nb_n}}^1 K^*(t)dt\right) e_j^{(d)} - \sum_{j = r + 1}^{r + \nb}\left(\int_{-1}^{\frac{r-j}{nb_n}} K^*(t)dt\right) e_j^{(d)} + \Op(b_n^{-1}),
        \label{deg:sum}
      \end{align}
      and similarly uniformly for $\nb + 1 \leq r \leq 3\nb + 1$, 
      \begin{align}
        &\sum_{i=\nb+1}^r e_i^{(d)}-\frac{1}{nb_n}\sum_{j=1}^n \sum_{i=\nb+1}^r e_{j}^{(d)} K_{b_{n}}^{*}(i / n-j/n)\\  
        &= \sum_{j = \nb + 1}^{r-\nb}\left(\int_{-1}^{\frac{j-\nb}{nb_n}} K^*(t)dt\right) e_j^{(d)} + \sum_{j = r - \nb + 1}^{r}\left(\int_{\frac{r-j}{nb_n}}^{1} K^*(t)dt \right) e_j^{(d)} \\ 
        &- \sum_{j = 1}^{\nb}\left(\int_{\frac{j-\nb}{nb_n}}^1 K^*(t)dt\right) e_j^{(d)} - \sum_{j = r + 1}^{r + \nb}\left(\int_{-1}^{\frac{r-j}{nb_n}} K^*(t)dt\right) e_j^{(d)} + \Op(b_n^{-1}). 
        \label{deg:sum1}
      \end{align}
      Combining \eqref{deg:alt_approx}, \eqref{deg:sum} and \eqref{deg:sum1}, by summation-by-parts formula, we have 
    \begin{align}
      \max_{\nb +1 \leq r \leq n - \nb}\left|\sum_{i=\nb+1}^r (e_i^{(d)} - R_{i,n,1})-\frac{1}{nb_n}\sum_{j=1}^n  \sum_{i=\nb+1}^r K_{b_{n}}^{*}(i / n - j/n)(e_j^{(d)} - R_{j,n,1})\right| = \op((nb_n)^{d+1/2}).\label{deg:step2.2}
    \end{align}
    Using similar technique in \eqref{deg:sum}, it follows from tedious calculation that for $r \geq 2\nb$, there exists a positive constant $c^{\prime}$ such that
    \begin{align}
      \left\| \sum_{i=\nb+1}^r R_{i,n,1} -\frac{1}{nb_n}\sum_{j=1}^n \sum_{i=\nb+1}^r K_{b_{n}}^{*}(i / n - j/n) R_{j,n,1} \right\|^2 \geq c^{\prime} (nb_n)^{2d+1}.\label{deg:step2.3}
    \end{align}
    Finally, since $(nb_n)/m \to \infty$, (i) of proposition follows from \eqref{deg:step1}, \eqref{deg:step2.1}, \eqref{deg:step2.2}, and \eqref{deg:step2.3}.
    
    Proof of (ii). 
     Similar to the steps in \eqref{deg:step1},  we have 
      \begin{align}
       \max_{\nb +1 \leq r \leq n - \nb} \|\tilde G_{r,d}\|^2 = O(nb_n m^{2d_n}),\label{deg:step1+local}
      \end{align}
     and uniformly for $\nb + 1 
    \leq r \leq n - \nb$, there exists a constant $c^{\prime\prime}$,
    \begin{align}
       \left \|\sum_{i=\nb+1}^r \tilde e_i^{(d_n)} \right\| \geq c^{\prime\prime} (nb_n)^{d_n+1/2}.
    \end{align}
    Since $m^{2d_n} = e^{2c\alpha_1}$, and $(nb_n)^{d_n} = e^{2c\beta}$, $\beta > \alpha_1$,  when $c$ is sufficiently large, we have 
    \begin{align}
      \lim_{n \to \infty} P\left(\left|\sum_{i=\nb +1}^r \tilde e_i^{(d_n)}\right| >  \left|\tilde G_{r,d_n}\right| \text{for $r = \nb +1, \cdots, n - \nb$}\right) = 1.    \end{align}
    
    \hfill $\Box$

       \small
 \normalem
\bibliographystyle{apalike}
\bibliography{main}

\end{document}